\newcommand{\BibTeX}{\textsc{B\kern-0.1emi\kern-0.017emb}\kern-0.15em\TeX}
\DeclareMathAlphabet\mathbfcal{OMS}{cmsy}{b}{n}
\newcommand{\nats}{\mathbb{N}}
\newcommand{\natswith}{\nats_{0}}
\newcommand{\reals}{\mathbb{R}}
\newcommand{\rationals}{\mathbb{Q}}
\newcommand{\rationalsnonneg}{\mathbb{Q}_{\geq0}}
\newcommand{\realspos}{\reals_{>0}}
\newcommand{\realsnonneg}{\reals_{\geq 0}}
\newcommand{\states}{\mathcal{X}}
\newcommand{\power}{\mathcal{P}(\hspace{-1.5pt}\states\hspace{-0.5pt})}
\newcommand{\poweron}[1]{\mathcal{P}(\hspace{-1.5pt}\states_{#1}\hspace{-0.5pt})}
\newcommand{\nonemptypower}{\mathcal{P}_{\emptyset}(\hspace{-1.5pt}\states\hspace{-0.5pt})}
\newcommand{\nonemptypoweron}[1]{\mathcal{P}_{\emptyset}(\hspace{-1.5pt}\states_{#1}\hspace{-0.5pt})}
\newcommand{\gambles}{\mathcal{G}(\hspace{-1.5pt}\states\hspace{-0.5pt})}
\newcommand{\gamblespos}{\mathcal{G}_{>0}(\hspace{-1.5pt}\states\hspace{-0.5pt})}
\newcommand{\gamblesnonneg}{\mathcal{G}_{\geq0}(\hspace{-1.5pt}\states\hspace{-0.5pt})}
\newcommand{\gambleson}[1]{\mathcal{G}(\hspace{-1.5pt}\states_{#1}\hspace{0.pt})}
\newcommand{\desir}{\mathcal{D}}
\newcommand{\ind}[1]{\mathbb{I}_{#1}}
\newcommand{\asa}{\Leftrightarrow}
\newcommand{\then}{\Rightarrow}
\newcommand{\abs}[1]{\left\vert #1 \right\vert}
\newcommand{\coloneqq}{:\!=}
\newcommand{\lp}{\underline{P}}
\newcommand{\pr}{P}
\newcommand{\natexLP}{\underline{E}}
\newcommand{\natexUP}{\overline{E}}
\newcommand{\posi}{\mathrm{posi}}
\newcommand{\marg}{\mathrm{marg}}
\newcommand{\A}{\mathcal{A}}
\newcommand{\B}{\mathcal{B}}
\newcommand{\C}{\mathcal{C}}
\newcommand{\E}{\mathcal{E}}
\newcommand{\Y}{\mathcal{Y}}
\newcommand{\G}{\mathcal{G}}
\begin{document}

\title{Independent Natural Extension for Infinite Spaces}
\author{\name Jasper De Bock \email jasper.debock@ugent.be\\
\addr Ghent University - ELIS, SYSTeMS\\
Technologiepark -- Zwijnaarde 914, 9052 Zwijnaarde, Belgium}
\maketitle
\begin{abstract}
We define and study the independent natural extension of two local uncertainty models for the general case of infinite spaces, using the frameworks of sets of desirable gambles and conditional lower previsions. In contrast to~\cite{Miranda2015460}, we adopt Williams-coherence instead of Walley-coherence. We show that our notion of independent natural extension always exists---whereas theirs does not---and that it satisfies various convenient properties, including factorisation and external additivity. The strength of these properties depends on the specific type of epistemic independence that is adopted. In particular, epistemic event-independence is shown to outperform epistemic atom-independence. Finally, the cases of lower expectations, expectations, lower probabilities and probabilities are obtained as special instances of our general definition. By applying our results to these instances, we demonstrate that epistemic independence is indeed epistemic, and that it includes the conventional notion of independence as a special case.
\end{abstract}
\begin{keywords}
independent natural extension; epistemic independence; Williams-coherence; infinite spaces; sets of desirable gambles; conditional lower previsions.
\end{keywords}

\section{Introduction}\label{sec:introduction}

When probabilities are imprecise, in the sense that they are only partially specified, it is no longer clear what it means for two variables to be independent~\citep{Couso:1999wh}. One approach is to apply the standard notion of independence to every element of some set of probability measures. The alternative, called epistemic independence, is to define independence as mutual irrelevance, in the sense that receiving information about one of the variables will not affect our uncertainty model for the other. 
The advantage of this intuitive alternative is that it has a much wider scope: since epistemic independence is expressed in terms of uncertainty models instead of probabilities, it can easily be applied to a variety of such models, including non-probabilistic ones. We here focus on sets of desirable gambles and conditional lower previsions, the latter of which includes lower expectations, expectations, lower probabilities and probabilities as special cases.

When an assessment of epistemic independence is combined with local uncertainty models, it leads to a unique corresponding joint uncertainty model that is called the independent natural extension. If the variables involved can take only a finite number of values, this independent natural extension always exists, and it then satisfies various convenient properties that allow for the design of efficient algorithms~\citep{deCooman:2011ey,deCooman:2012vba}.
If the variables involved take values in an infinite set, the situation becomes more complicated. On the one hand, for the specific case of lower probabilities,~\cite{Vicig:2000vh} managed to obtain results that resemble those of the finite case. On the other hand, for the more general case of lower previsions,~\cite{Miranda2015460} recently found that the independent natural extension may not even exist. 

Our main contribution consists in generalising the results of~\cite{Vicig:2000vh} to the case of conditional lower previsions, using sets of desirable gambles as an intermediate step. The key technical difference with~\cite{Miranda2015460} is that we use Williams-coherence~\citep{williams1975,Williams:2007eu} instead of Walley-coherence~\citep{Walley:1991vk}. This difference turns out to be crucial because our notion of independent natural extension always exists. Furthermore, as we will see, it satisfies the same convenient properties that are known to hold in the finite case, including factorisation and external additivity. 

An essential feature of our approach is that we adopt a very general notion of epistemic independence, where the choice of conditioning events is initially left open. Important examples such as epistemic atom-independence and epistemic event-independence are then obtained as special cases. In this way, we are able to study the effect of different types of epistemic independence on the resulting notion of independent natural extension. 

The rest of the paper is structured as follows. We start in Section~\ref{sec:prelim} by introducing some basic technical concepts and notation that will be needed further on. Next, in Section~\ref{sec:modellinguncertainty}, we explain how uncertainty can be modelled with sets of desirable gambles and conditional lower previsions. For readers that are unfamiliar with these frameworks, this section serves as a stand-alone introduction. With all these preliminaries in place, the paper then moves on to its main contributions, which we report on in Sections~\ref{sec:independence}--\ref{sec:factadd}. Section~\ref{sec:independence} introduces our general notion of epistemic independence and explains how it subsumes epistemic atom- and event-independence as special cases. Next, Section~\ref{sec:indnatext} introduces our central object of interest, which is the independent natural extension, and Section~\ref{sec:choiceofevents} discusses the extent to which it depends on the specific type of epistemic independence that is adopted. Crucially, we find that in our approach, regardless of the chosen type of epistemic independence, the independent natural extension always exists. Our external additivity and factorisation results are reported on in Section~\ref{sec:factadd}; the strength of these results does depend on the chosen type of epistemic independence. In the last part of the paper, which consists of Sections~\ref{sec:probs} and~\ref{sec:specialcases}, we reinterpret our results in terms of lower expectations, expectations, lower probabilities and probabilities. Section~\ref{sec:probs} recalls how all of these models are special cases of conditional lower previsions, which then enables us to apply our results to them in Section~\ref{sec:specialcases}. We also use this connection to explain why epistemic independence is indeed epistemic, and how it includes the conventional notion of independence as a special case.
We end the paper in Section~\ref{sec:conclusions} with a brief summary of our main results and findings. The proofs of our results are gathered in an appendix.

Finally, I would like to add that this paper is an extended version of an earlier conference version; see~\cite{debock2017williamsPMLR}. The most substantial additions are the results in Sections~\ref{sec:probs} and~\ref{sec:specialcases} and the proofs in Appendix~\ref{sec:proofs}. We have also added several examples.

\section{Preliminaries and Notation}\label{sec:prelim}

We use $\nats$ to denote the natural numbers without zero and let $\natswith\coloneqq\nats\cup\{0\}$. $\reals$ is the set of real numbers and $\rationals$ is the set of rational numbers. Sign restrictions are imposed with subscripts. For example, we let $\reals_{>0}$ be the set of positive real numbers and let $\rationals_{\geq0}$ be the set of non-negative rational numbers. The extended real numbers are denoted by $\overline{\reals}\coloneqq\reals\cup\{-\infty,+\infty\}$.

For any non-empty set $\states$, the power set of $\states$---the set of all subsets of $\states$---is denoted by $\power$, and we let $\nonemptypower\coloneqq\power\setminus\{\emptyset\}$ be the set of all non-empty subsets of $\states$. Elements of $\power$ are called events. A set of events $\B\subseteq\power$ is called a field if it is non-empty and closed with respect to complements and finite intersections and unions. If it is also closed with respect to countable intersections and unions, it is called a sigma field. A partition of $\states$ is a set $\B\subseteq\nonemptypower$ of pairwise disjoint non-empty subsets of $\states$ whose union is equal to $\states$. We also adopt the notational trick of identifying $\states$ with the set of atoms $\{\{x\}\colon x\in\states\}$, which allows us to regard $\states$ as a partition of $\states$.

A bounded real-valued function on $\states$ will be called a gamble on $\states$. The set of all gambles on $\states$ is denoted by $\gambles$, the set of all non-negative gambles on $\states$ is denoted by $\mathcal{G}_{\geq0}(\states)$, and we let $\gamblespos\coloneqq\mathcal{G}_{\geq0}(\states)\setminus\{0\}$ be the set of all non-negative non-zero gambles. For any set of gambles $\A\subseteq\gambles$, we let
\vspace{-2pt}
\begin{equation}\label{eq:posi}
\posi(\A)
\coloneqq
\left\{
\sum_{i=1}^n \lambda_if_i
\colon
n\in\nats,
\lambda_i\in\realspos,f_i\in\mathcal{A}
\right\}
\end{equation}
and
\begin{equation}\label{eq:natextop}
\E(\A)
\coloneqq
\posi\left(
\A\cup\gamblespos
\right).\vspace{10pt}
\end{equation}
Indicators are a particular type of gamble. For any $A\in\power$, the corresponding indicator $\ind{A}$ of $A$ is a gamble in $\G_{\geq0}(\states)$, defined for all $x\in\states$ by $\ind{A}(x)\coloneqq1$ if $x\in A$ and $\ind{A}(x)\coloneqq0$ otherwise.

Finally, for any---possibly empty---$\B\subseteq\nonemptypower$, we will also require the notion of a non-negative $\B$-measurable gamble, which we define as a uniform limit of simple $\B$-measurable gambles.

\begin{definition}\label{def:measurable:simple}Let $\B\subseteq\nonemptypower$. We call $\smash{g\in\mathcal{G}_{\geq0}(\states)}$ a simple $\B$-measurable gamble if there are $c_0\in\reals_{\geq0}$, $n\in\natswith$ and, for all $i\in\{1,\dots,n\}$, $c_i\in\reals_{\geq0}$ and $B_i\in\B$, such that $g=c_0+\sum_{i=1}^nc_i\ind{B_i}$.
\end{definition}

\begin{definition}\label{def:measurable:uniform}Let $\B\subseteq\nonemptypower$. A gamble $g\in\mathcal{G}_{\geq0}(\states)$ is $\B$-measurable if it is a uniform limit of non-negative simple $\B$-measurable gambles, in the sense that there is a sequence $\{g_n\}_{n\in\nats}$ of simple $\B$-measurable gambles in $\mathcal{G}_{\geq0}(\states)$ such that $\lim_{n\to+\infty}\sup\abs{g-g_n}=0$.
\end{definition}

Readers that are familiar with the concepts of simple and measurable functions that are common in measure theory will observe some similarities. However, there are some important differences as well. On the one hand, our definitions are more restrictive: we only consider bounded non-negative functions, Definition~\ref{def:measurable:simple} requires that the coefficients $c_i$ are non-negative, and Definition~\ref{def:measurable:uniform} considers uniform limits instead of pointwise limits. On the other hand, our definitions are more general because we allow for $\B$ to be any subset of $\nonemptypower$. Nevertheless, if $\B\cup\{\emptyset\}$ is a sigma field, we have the following equivalence.

\begin{proposition}\label{prop:measurability:equivalenceforsigmafield}
Consider any $\B\subseteq\nonemptypower$ such that $\B^*\coloneqq\B\cup\{\emptyset\}$ is a sigma field. Then for any $g\in\gamblesnonneg$, $g$ is $\B^*$-measurable in the measure-theoretic sense~\cite[Definition~10.1]{Nielsen1997} if and only if it is $\B$-measurable in the sense of Definition~\ref{def:measurable:uniform}.
\end{proposition}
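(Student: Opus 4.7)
The plan is to prove each direction separately, with the bulk of the work going into the ``only if'' direction, where I have to produce an approximating sequence that meets the sign and non-emptiness constraints built into Definitions~\ref{def:measurable:simple}--\ref{def:measurable:uniform}.

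For the easy ``if'' direction, I would start from a sequence $\{g_n\}_{n\in\nats}$ of simple $\B$-measurable gambles in $\gamblesnonneg$ with $\lim_{n\to+\infty}\sup\abs{g-g_n}=0$. Each $g_n$ is a finite sum of a non-negative constant and non-negative multiples of indicators of sets in $\B\subseteq\B^*$, hence is $\B^*$-measurable in the measure-theoretic sense. Since uniform convergence implies pointwise convergence and pointwise limits of measurable functions into $\reals$ are measurable (which uses that $\B^*$ is a sigma field), $g$ is $\B^*$-measurable in the measure-theoretic sense.

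For the ``only if'' direction, I would use the standard dyadic truncation. Since $g\in\gamblesnonneg$ is bounded, pick $M\in\nats$ with $M\geq\sup g$. For every integer $n\geq M$ and every $k\in\{1,\dots,n\cdot 2^n\}$, set
\begin{equation*}
A_{n,k} \coloneqq g^{-1}\bigl(\bigl[\tfrac{k-1}{2^n},\tfrac{k}{2^n}\bigr)\bigr).
\end{equation*}
Measure-theoretic $\B^*$-measurability of $g$ gives $A_{n,k}\in\B^*$. Define
\begin{equation*}
g_n \coloneqq \sum_{\substack{k=1,\dots,n\cdot 2^n\\ A_{n,k}\neq\emptyset}} \tfrac{k-1}{2^n}\,\ind{A_{n,k}},
\end{equation*}
which is a simple $\B$-measurable gamble in $\gamblesnonneg$ with $c_0=0$ (all coefficients are non-negative and the summation ranges only over non-empty $A_{n,k}\in\B$). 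Because $n\geq M\geq\sup g$, the sets $A_{n,k}$ partition the essential range of $g$, so for every $x\in\states$ we have $0\leq g(x)-g_n(x)<2^{-n}$, giving $\sup\abs{g-g_n}\leq 2^{-n}\to 0$. Hence $g$ is $\B$-measurable in the sense of Definition~\ref{def:measurable:uniform}.

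The main obstacle I expect is not the analytic approximation itself, which is a textbook construction, but rather the bookkeeping enforced by Definitions~\ref{def:measurable:simple}--\ref{def:measurable:uniform}: coefficients must be non-negative and the indicator sets must lie in $\B$ rather than $\B^*$. The first restriction is handled for free by the dyadic construction, which produces non-negative coefficients; the second is handled by discarding the empty level sets. A minor point to be careful about is the top of the range of $g$: one must choose $n$ strictly exceeding $\sup g$ so that the half-open intervals cover the whole range, which is why I take $n\geq M\geq\sup g$ with $M\in\nats$.
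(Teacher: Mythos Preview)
Your proof is correct. The ``if'' direction matches the paper's argument essentially verbatim: simple $\B$-measurable gambles are $\B^*$-measurable in the classical sense, and pointwise (a fortiori uniform) limits preserve measurability when $\B^*$ is a sigma field.

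For the ``only if'' direction you take a genuinely different route from the paper. The paper does not build the approximating sequence by hand; instead it observes that classical $\B^*$-measurability gives $\{x:g(x)\geq r\}\in\B^*=\B\cup\{\emptyset\}$ for every $r\in\rationalsnonneg$, and then invokes Proposition~\ref{prop:measurable:sufficient:general} as a black box to conclude $\B$-measurability in the sense of Definition~\ref{def:measurable:uniform}. Your dyadic construction is more self-contained and avoids the forward reference, at the cost of redoing work that Proposition~\ref{prop:measurable:sufficient:general} already packages; the paper's route is shorter and more modular, since the same sufficient condition is reused elsewhere (e.g.\ Corollary~\ref{corol:measurable:sufficient:allsets} and the examples). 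Both arguments ultimately rely on the same idea---level sets of a bounded non-negative measurable function yield a uniformly convergent simple approximation with non-negative coefficients---so neither is deeper than the other.

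One small imprecision: you write ``pick $M\in\nats$ with $M\geq\sup g$'' and later say this ensures the half-open dyadic intervals cover the range. If $\sup g$ happens to be an integer that is attained, then $n=M=\sup g$ leaves the point(s) where $g(x)=\sup g$ uncovered by $\bigcup_k A_{n,k}$. You flag the issue in your last paragraph but the fix you state (``$n\geq M\geq\sup g$'') does not quite resolve it; simply take $M>\sup g$, which is always possible since $g$ is bounded.
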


The proof of this result is based on the following sufficient condition for $\B$-measurability, which provides a convenient tool for establishing the $\B$-measurability of a given function. In particular, it implies that every non-negative gamble is $\nonemptypower$-measurable.

\begin{proposition}\label{prop:measurable:sufficient:general}
Let $\B\subseteq\nonemptypower$ and $g\in\mathcal{G}_{\geq0}(\states)$. If, for all $r\in\rationalsnonneg$, the set $\{x\in\states\colon g(x)\geq r\}$ is a finite union of pairwise disjoint events in $\B\cup\{\states,\emptyset\}$, then $g$ is $\B$-measurable.
\end{proposition}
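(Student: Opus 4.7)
The plan is to mimic the standard measure-theoretic construction: approximate $g$ uniformly from below by simple functions whose level sets are exactly the sets $\{x : g(x) \geq k/n\}$ that we have nice structural information about.

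First, since $g$ is a gamble it is bounded, so let $M \coloneqq \sup_{x\in\states} g(x) < +\infty$. For each $n \in \nats$, let $K_n \coloneqq \lceil Mn \rceil$ and, for $k \in \{1,\dots,K_n\}$, set $A_k^n \coloneqq \{x\in\states : g(x) \geq k/n\}$. Since $k/n \in \rationalsnonneg$, the hypothesis gives that each $A_k^n$ is a finite union of pairwise disjoint events in $\B\cup\{\states,\emptyset\}$. Define the candidate approximating gamble
\begin{equation*}
g_n \coloneqq \frac{1}{n}\sum_{k=1}^{K_n}\ind{A_k^n}.
\end{equation*}

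The key bookkeeping step is to check that each $g_n$ is a simple $\B$-measurable gamble in the sense of Definition~\ref{def:measurable:simple}. For each fixed $k$, write $A_k^n$ as the disjoint union $B_{k,1}\cup\dots\cup B_{k,m_k}$ with $B_{k,j}\in\B\cup\{\states,\emptyset\}$. If some $B_{k,j}=\states$, then by pairwise disjointness all the others must be $\emptyset$, so $A_k^n=\states$ and $\ind{A_k^n}=1$; otherwise every $B_{k,j}$ is either in $\B$ or is $\emptyset$, so $\ind{A_k^n}$ is a finite sum of indicators of events in $\B$. In either case, $\ind{A_k^n}$ is the sum of a non-negative constant and a non-negative linear combination of indicators of events in $\B$. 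Collecting all constant contributions into a single $c_0 \in \reals_{\geq0}$ then puts $g_n$ into the form required by Definition~\ref{def:measurable:simple}.

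Next, I verify that $g_n \to g$ uniformly. For any $x \in \states$, the integer $\#\{k\in\{1,\dots,K_n\} : g(x) \geq k/n\}$ equals $\lfloor n g(x)\rfloor$ when $g(x) < K_n/n$, and equals $K_n$ when $g(x) = K_n/n$; in either case $g_n(x) = \lfloor n g(x)\rfloor/n$ (using $K_n \geq Mn \geq n g(x)$), so $0 \leq g(x) - g_n(x) \leq 1/n$. Hence $\sup_{x\in\states}\abs{g(x) - g_n(x)} \leq 1/n \to 0$, and Definition~\ref{def:measurable:uniform} gives that $g$ is $\B$-measurable.

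The only non-routine part is the bookkeeping in the second paragraph, ensuring that the presence of $\states$ among the admissible building blocks does not spoil $\B$-measurability; this is handled by the disjointness observation that forces $A_k^n = \states$ in that case, letting the contribution be absorbed into the constant $c_0$.
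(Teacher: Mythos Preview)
Your proof is correct and follows essentially the same approach as the paper: approximate $g$ uniformly by $\frac{1}{n}$ times a finite sum of indicators of the superlevel sets $\{g\geq k/n\}$, use the hypothesis to decompose each indicator into indicators of events in $\B\cup\{\states,\emptyset\}$, and absorb any $\ind{\states}$ contribution into the constant $c_0$. The only cosmetic difference is that the paper first fixes a rational bound $\alpha>\sup g$ and uses step size $\alpha/n$ with $n-1$ summands, whereas you use step size $1/n$ with $\lceil Mn\rceil$ summands; both give uniform error at most $1/n$ (respectively $\alpha/n$).
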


\begin{corollary}\label{corol:measurable:sufficient:allsets}
Every $g\in\mathcal{G}_{\geq0}(\states)$ is $\nonemptypower$-measurable.
\end{corollary}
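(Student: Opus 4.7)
The plan is to deduce this corollary as an immediate instantiation of Proposition~\ref{prop:measurable:sufficient:general}, taking $\B\coloneqq\nonemptypower$. With this choice, $\B\cup\{\states,\emptyset\}$ becomes the full power set $\power$, so every subset of $\states$ already belongs to $\B\cup\{\states,\emptyset\}$.

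Concretely, I would fix an arbitrary $g\in\mathcal{G}_{\geq0}(\states)$ and an arbitrary $r\in\rationalsnonneg$, and observe that the level set $A_r\coloneqq\{x\in\states\colon g(x)\geq r\}$ is a subset of $\states$, hence an element of $\power=\B\cup\{\states,\emptyset\}$. Therefore $A_r$ is trivially a finite union (of one element, namely itself) of pairwise disjoint events in $\B\cup\{\states,\emptyset\}$. Since this holds for every $r\in\rationalsnonneg$, Proposition~\ref{prop:measurable:sufficient:general} immediately yields that $g$ is $\nonemptypower$-measurable.

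There is essentially no obstacle here; the entire content of the corollary is that the hypothesis of Proposition~\ref{prop:measurable:sufficient:general} is trivially satisfied once $\B$ is large enough to contain all non-empty subsets of $\states$. The only thing worth double-checking is the degenerate case $r=0$, where $A_r=\states\in\B\cup\{\states,\emptyset\}$ as well, so no separate argument is needed.
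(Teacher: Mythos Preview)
Your proof is correct and takes exactly the same approach as the paper, which simply states that the corollary is an immediate consequence of Proposition~\ref{prop:measurable:sufficient:general}. You have just made explicit the obvious verification that every level set belongs to $\power=\B\cup\{\states,\emptyset\}$ when $\B=\nonemptypower$.
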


The following three examples  provide these abstract concepts and results with some intuition, by studying the (non-)$\B$-measurability of various functions, for different choices of $\B$. They also demonstrate that $\B$-measurability is easier to achieve if $\B$ contains more events.

\begin{example}\label{ex:oneovern}
Let $\states=\nats$ and let $g\in\mathcal{G}_{\geq0}(\states)$ be defined by $g(x)\coloneqq \nicefrac{1}{x}$ for all $x\in\nats$. Corollary~\ref{corol:measurable:sufficient:allsets} then trivially implies that $g$ is $\mathcal{P}_{\emptyset}(\nats)$-measurable. However, $g$ is also $\B$-measurable for some strict subsets $\B$ of $\mathcal{P}_{\emptyset}(\nats)$. For example, Proposition~\ref{prop:measurable:sufficient:general} implies that $g$ is $\nats$-measurable, because for every $r\in\rationals_{\geq0}$, the set
\vspace{-6pt}
\begin{equation*}
\{x\in\nats\colon g(x)\geq r\}
=\{x\in\nats\colon \frac{1}{x}\geq r\}
=
\begin{cases}
\emptyset&\text{ if $r>1$}\\
\{1,\dots,\lfloor\nicefrac{1}{r}\rfloor\}&\text{ if $0<r\leq1$}\\
\nats&\text{ if $r=0$}
\end{cases}
\vspace{8pt}
\end{equation*}
is clearly a finite union of pairwise disjoint events in $\B\cup\{\states,\emptyset\}=\nats\cup\{\nats,\emptyset\}$. The set $\B$ cannot be too small though. For example, in the extreme case where $\B=\emptyset$, $g$ is no longer $\B$-measurable. The easiest way to see this is to infer from Definition~\ref{def:measurable:simple} that simple $\emptyset$-measurable gambles are constant. Therefore, since uniform limits of constant gambles are constant, Definition~\ref{def:measurable:uniform} implies that all $\emptyset$-measurable gambles are constant. Hence, since $g$ is not constant, it is not $\emptyset$-measurable.\hfill$\lozenge$
\end{example}

\begin{example}\label{ex:oddandeven}
Let $\states=\nats$ and let $\ind{\mathrm{odd}}\in\G_{\geq0}(\nats)$ be the indicator of the odd numbers, defined for all $x\in\nats$ by $\ind{\mathrm{odd}}(x)=1$ if $x$ is odd and $\ind{\mathrm{odd}}(x)=0$ otherwise. Similarly, let $\ind{\mathrm{even}}\in\G_{\geq0}(\nats)$ be the indicator of the even numbers, defined by $\ind{\mathrm{even}}(x)\coloneqq1$ if $x$ is even and $\ind{\mathrm{even}}(x)\coloneqq0$ otherwise. Here too, it follows from Corollary~\ref{corol:measurable:sufficient:allsets} that $\ind{\mathrm{odd}}$ and $\ind{\mathrm{even}}$ are $\mathcal{P}_{\emptyset}(\nats)$-measurable. However, as we are about to prove, and in contrast with the previous example, $\ind{\mathrm{odd}}$ and $\ind{\mathrm{even}}$ are not $\nats$-measurable. We focus on $\ind{\mathrm{odd}}$; the argument for $\ind{\mathrm{even}}$ is completely analogous.

Assume \emph{ex absurdo} that $\ind{\mathrm{odd}}$ is $\nats$-measurable. It then follows from Definition~\ref{def:measurable:uniform} that there is some simple $\nats$-measurable function $g\in\G_{\geq0}$ such that $\vert \ind{\mathrm{odd}}(x)-g(x)\vert<\nicefrac{1}{2}$ for all $x\in\nats$. Since $\B=\nats$, Definition~\ref{def:measurable:simple} then implies that there is some $c_0\in\reals_{\geq0}$ and some finite set $A$ such that $g(x)=c_0$ for all $x\in\nats\setminus A$. Since $\nats$ contains infinitely many odd and even numbers, this in turn implies that there is some (and in fact infinitely many) odd $x_{\mathrm{odd}}$ and even $x_{\mathrm{even}}$ in $\nats\setminus A$ for which $g(x_{\mathrm{odd}})=g(x_{\mathrm{even}})=c_0$. Hence, we find that
\begin{align*}
1
=\abs{\ind{\mathrm{odd}}(x_{\mathrm{odd}})-\ind{\mathrm{odd}}(x_{\mathrm{even}})}
&\leq\abs{\ind{\mathrm{odd}}(x_{\mathrm{odd}})-c_0}+
\abs{c_0-\ind{\mathrm{odd}}(x_{\mathrm{even}})}\\
&=\abs{\ind{\mathrm{odd}}(x_{\mathrm{odd}})-g(x_{\mathrm{odd}})}+
\abs{g(x_{\mathrm{even}})-\ind{\mathrm{odd}}(x_{\mathrm{even}})}
<\frac{1}{2}+\frac{1}{2}=1,
\end{align*}
a contradiction.
\hfill$\lozenge$
\end{example}

\begin{example}\label{ex:onthereals}
Let $\states=\reals$ and let $\B\coloneqq\B^*\setminus\{\emptyset\}$, with $\B^*$ the $\sigma$-algebra of Lebesgue measurable subsets of $\reals$. Let $g\in\G_{\geq0}(\reals)$ be the indicator of the non-negative reals, defined for all $x\in\reals$ by $g(x)\coloneqq 1$ if $x\geq0$ and $g(x)\coloneqq0$ otherwise, and let $h\in\G(\reals)$ be defined by $h(x)\coloneqq x^3$ for all $x\in\reals$. Then $g$ is Lebesgue measurable because it is a step function and $h$ is Lebesgue measurable because it is continuous. Therefore, it follows from Proposition~\ref{prop:measurability:equivalenceforsigmafield} that $g$ and $h$ are $\B$-measurable.
\hfill$\lozenge$
\end{example}

\section{Modelling Uncertainty}\label{sec:modellinguncertainty}

A subject's uncertainty about a variable $X$ that takes values $x$ in some non-empty set $\states$ can be mathematically represented in various ways. The most popular such method is probability theory, but it is by no means the only one, nor is it the most general one. We here adopt the more general frameworks of sets of desirable gambles and conditional lower previsions.

The main aim of this section is to provide an overview of the basic technical aspects of these frameworks, because they will be essential in the rest of the paper. Notably, we do not impose any constraints on the cardinality of $\states$: it may be finite, countably infinite or uncountably infinite. Connections with other---perhaps better known---models for uncertainty, including probability theory, will be  briefly touched upon at the end of this section; detailed connections will be established in Section~\ref{sec:probs}.

The basic idea behind \emph{sets of desirable gambles} is to model a subject's uncertainty about $X$ by considering her attitude towards gambles---bets---on $\states$. In particular, we consider the gambles $f\in\gambles$ that she finds \emph{desirable}, in the sense that she is willing to engage in a transaction where, once the actual value $x\in\states$ of $X$ is known, she will receive a---possibly negative---reward $f(x)$ in some linear utility scale. Even more so, she prefers these desirable gambles over the status quo, that is, over not conducting any transaction at all. A set of desirable gambles is called coherent if it satisfies the following rationality requirements.

\begin{definition}
\label{def:SDG}
A coherent set of desirable gambles $\desir$ on $\states$ is a subset of\/ $\gambles$ such that, for any two gambles $f,g\in\gambles$ and any positive real number $\lambda\in\realspos$:
\vspace{2pt}

\begin{enumerate}[label=\emph{D\arabic*:},ref=D\arabic*]
\item
if $f\geq0$ and $f\neq0$, then $f\in\desir$;\label{def:SDG:partialgain}
\item
if $f\in\desir$ then $\lambda f\in\desir$;\label{def:SDG:homo}
\item
if $f,g\in\desir$, then $f+g\in\desir$;\label{def:SDG:convex}
\item
if $f\leq0$, then $f\notin\desir$.\label{def:SDG:partialloss}
\vspace{2pt}
\end{enumerate}
\end{definition}
\noindent
The first axiom states that the possibility of a positive reward without risking a negative reward should always be desirable, whereas the fourth axiom states that gambles that  offer no positive rewards should never be desirable. The other two axioms are immediate consequences of the linearity of the utility scale.
Despite their simplicity, sets of desirable gambles offer a surprisingly powerful framework for modelling uncertainty; see for example~\citep{Walley:2000ef} and~\citep{Quaeghebeur:2014tjb}. 
For our present purposes though, all we need for now is Definition~\ref{def:SDG}.

\emph{Conditional lower previsions} also model a subject's uncertainty about $X$ by considering her attitude towards gambles on $\states$. However, in this case, instead of considering sets of gambles, we consider the prices at which a subject is willing to buy these gambles. Let
\begin{equation*}
\mathcal{C}(\states)
\coloneqq
\gambles\times\nonemptypower
\end{equation*}
be the set of all pairs $(f,B)$, where $f$ is a gamble on $\states$ and $B$ is a non-empty subset of $\states$---an event. A conditional lower prevision is then defined as follows.
\begin{definition}
A conditional lower prevision $\lp$ on $\C\subseteq\C(\states)$ is a map
\begin{equation*}
\lp\colon
\C
\to
\overline{\reals}
\colon
(f,B)\to \lp(f\vert B).\vspace{7pt}
\end{equation*}
\end{definition}
For any $(f,B)$ in the domain $\C$, the lower prevision $\lp(f\vert B)$ of $f$ conditional on $B$ is interpreted as a subject's supremum price $\mu$ for buying $f$, under the condition that the transaction is called off when $B$ does not happen---if $x\notin B$. In other words, $\lp(f\vert B)$ is the supremum value of $\mu$ for which she is willing to engage in a transaction where she receives $f(x)-\mu$ if $x\in B$ and zero otherwise, and furthermore prefers this transaction to the status quo. If $B=\states$, we adopt the shorthand notation $\lp(f)\coloneqq \lp(f\vert\states)$ and then call $\lp(f)$ the lower prevision of $f$. If $B=\states$ for all $(f,B)\in\C$, meaning that there is some $\G\subseteq\gambles$ such that $\C=\{(f,\states)\colon f\in\G\}$, this convention allows us to regard $\lp$ as an operator on $\G$, and we then say that $\lp$ is a \emph{lower prevision}. In this sense, lower previsions are a special case of conditional lower previsions.

It is also possible to consider \emph{conditional upper previsions} $\overline{P}(f\vert B)$, which are interpreted as infimum selling prices. However, since selling $f$ for $\mu$ is equivalent to buying $-f$ for $-\mu$, we have that $\overline{P}(f\vert B)=-\lp(-f\vert B)$. For that reason, we will mainly focus on conditional lower previsions. A similar remark applies to (unconditional) \emph{upper previsions}.

Because of their interpretation in terms of buying prices for gambles, a particularly intuitive way to obtain a conditional lower prevision $\lp$ is to derive it from a set of gambles $\desir$. Specifically, for every $\desir\subseteq\gambles$, we let 
\begin{equation}\label{eq:LPfromD}
\lp_\desir(f\vert B)
\coloneqq
\sup\{\mu\in\reals\colon
[f-\mu]\ind{B}\in\desir
\}
\text{~~for all $(f,B)\in\C(\states)$.}
\vspace{3pt}
\end{equation}
A conditional lower prevision is then called coherent if can be derived from a coherent set of desirable gambles in this way.

\begin{definition}
\label{def:cohlp}
A conditional lower prevision $\lp$ on a domain $\C\subseteq\C(\states)$ is coherent if there is a coherent set of desirable gambles $\desir$ on $\states$ such that $\lp$ coincides with $\lp_\desir$ on $\C$.
\end{definition}

This definition of coherence is heavily inspired by the work of~\cite{williams1975,Williams:2007eu}. The only two minor differences are that our rationality axioms on $\desir$ are slightly different from his---for example, he allows for $\desir$ to include the zero gamble---and that we do not impose any structure on the domain $\C$. Nevertheless, when the domain $\C$ satisfies the structural constraints in~\citep{Williams:2007eu}, Definition~\ref{def:cohlp} is equivalent to that of Williams. More generally, as the following result establishes, it is equivalent to the structure-free notion of Williams-coherence that was developed by \cite{Pelessoni:2009co}. 


\begin{proposition}\label{prop:equivalentToPelessoniAndVicig}
A conditional lower prevision $\lp$ on $\C\subseteq\C(\states)$ is coherent if and only if it is real-valued and, for all $n\in\natswith$ and all choices of $\lambda_0,\dots,\lambda_n\in\reals_{\geq0}$ and $(f_0,B_0),\dots,(f_n,B_n)\in\mathcal{C}$:
\begin{equation}\label{eq:prop:equivalentToPelessoniAndVicig}
\sup_{x\in B}
\Big(\,
\sum_{i=1}^n
\lambda_i\ind{B_i}(x)
[f_i(x)-\lp(f_i\vert B_i)]
-\lambda_0\ind{B_0}(x)
[f_0(x)-\lp(f_0\vert B_0)]
\Big)
\geq0,
\vspace{6pt}
\end{equation}
where we let $B\coloneqq\cup_{i=0}^nB_i$.
\end{proposition}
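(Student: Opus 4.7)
The plan is to prove both implications separately. An elementary observation that I use throughout is that, for any coherent set of desirable gambles $\desir$, the set $\{\mu\in\reals\colon[f-\mu]\ind{B}\in\desir\}$ is downward closed: if $\mu'<\mu$ lies in this set, then $[f-\mu']\ind{B}=[f-\mu]\ind{B}+(\mu-\mu')\ind{B}$ combines an element of $\desir$ with a non-negative non-zero gamble, so D1 and D3 place the sum in $\desir$. Consequently $[f-\lp_\desir(f\vert B)+\eta]\ind{B}\in\desir$ for every $\eta>0$, and a matching use of D1 and D4 gives $\inf_{x\in B}f(x)\leq\lp_\desir(f\vert B)\leq\sup_{x\in B}f(x)$, so $\lp_\desir$ is automatically real-valued.

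For the forward direction, I assume $\lp=\lp_\desir$ on $\C$ and argue by contradiction: suppose the supremum in~\eqref{eq:prop:equivalentToPelessoniAndVicig} is $\leq-\delta$ for some $\delta>0$ and some choice of $\lambda_0,\dots,\lambda_n$ and $(f_i,B_i)$. For $\epsilon>0$ with $\epsilon\sum_{i=1}^n\lambda_i<\delta/2$, the gamble $M\coloneqq\sum_{i=1}^n\lambda_i[f_i-\lp(f_i\vert B_i)+\epsilon]\ind{B_i}$ lies in $\desir$ by D2 and D3. In the main case $\lambda_0>0$, put $\mu\coloneqq\lp(f_0\vert B_0)+\delta/(2\lambda_0)$; a three-region check (off $B$, on $B\setminus B_0$, on $B_0$) using the assumed uniform negative bound on $B$ shows that $N\coloneqq\lambda_0[f_0-\mu]\ind{B_0}-M$ is $\geq 0$ everywhere. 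Then D1 and D3 place $M+N=\lambda_0[f_0-\mu]\ind{B_0}$ in $\desir$, and D2 then extracts $[f_0-\mu]\ind{B_0}\in\desir$, contradicting $\mu>\lp(f_0\vert B_0)$. The corner case $\lambda_0=0$ is easier: $M$ itself is then $\leq 0$ everywhere, directly contradicting D4.

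For the backward direction, I assume $\lp$ is real-valued and satisfies~\eqref{eq:prop:equivalentToPelessoniAndVicig}, and define
\[
\A_0\coloneqq\{[f-\lp(f\vert B)+\epsilon]\ind{B}\colon (f,B)\in\C,\,\epsilon>0\}
\qquad\text{and}\qquad
\desir\coloneqq\E(\A_0)=\posi(\A_0\cup\gamblespos).
\]
Axioms D1--D3 for $\desir$ are immediate from the cone structure and from $\gamblespos\subseteq\desir$. For D4 and for the converse inequality $\lp_\desir\leq\lp$ needed to pin down the prevision equation on $\C$, essentially the same argument works: writing an element of $\desir$ as $\sum_{i\in I}\lambda_i[f_i-\lp(f_i\vert B_i)+\epsilon_i]\ind{B_i}+\sum_{j\in J}\mu_jg_j$ with $g_j\in\gamblespos$ and dropping the non-negative $\gamblespos$-tail yields the pointwise bound $\sum_{i\in I}\lambda_i[f_i-\lp(f_i\vert B_i)]\ind{B_i}(x)\leq(\text{target gamble})(x)-\sum_{i\in I}\lambda_i\epsilon_i\ind{B_i}(x)$. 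Taking the target to be $f\leq 0$ (for D4) or $[g-\mu]\ind{B}$ with $\mu>\lp(g\vert B)$ (for the prevision equation) renders the corresponding Williams expression uniformly strictly negative on its relevant domain, contradicting~\eqref{eq:prop:equivalentToPelessoniAndVicig} (applied with $\lambda_0=0$ and $(f_0,B_0)=(f_{i^*},B_{i^*})$ for some $i^*\in I$, respectively with $\lambda_0=1$ and $(f_0,B_0)=(g,B)$). The remaining direction $\lp_\desir\geq\lp$ is immediate from $[f-\lp(f\vert B)+\epsilon]\ind{B}\in\A_0\subseteq\desir$.

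The main obstacle I expect is extracting a \emph{strictly} negative uniform upper bound, rather than a merely non-positive one, from the contribution $\sum_{i\in I}\lambda_i\epsilon_i\ind{B_i}(x)$ on the regions $B^*\setminus B$ in the backward direction and $B\setminus B_0$ in the forward direction. Pointwise positivity is clear because every such $x$ lies in at least one $B_i$, but uniformity requires the finite-combinatorial observation that only finitely many index sets $\{i\in I\colon x\in B_i\}$ can occur, so the minimum of $\sum_{i\in I_x}\lambda_i\epsilon_i$ over $x$ is the minimum of finitely many strictly positive numbers and is therefore itself strictly positive.
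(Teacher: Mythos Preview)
Your proof is correct and follows essentially the same route as the paper's: both directions hinge on the set $\E(\A_{\lp})$ (your $\A_0$ equals the paper's $\A_{\lp}$), with the forward direction extracting a contradiction from the desirability axioms and the backward direction verifying D4 and the equality $\lp_{\desir}=\lp$ via the Williams inequality. The only cosmetic difference is that the paper argues the forward implication directly (showing $\sup_B h\geq-\epsilon\sum_i\lambda_i$ for every $\epsilon>0$) rather than by contradiction with a fixed $\delta$, and you should note that your claim ``$M\in\desir$ by D2 and D3'' tacitly assumes at least one $\lambda_i>0$ for $i\geq1$---a harmless edge case, since when $M=0$ your $N$ alone already lies in $\desir$ by D1.
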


The advantage of this alternative characterisation is that it is expressed directly in terms of lower previsions. Nevertheless, we consider Equation~\eqref{eq:prop:equivalentToPelessoniAndVicig} to be less intuitive than Definition~\ref{def:cohlp}, which is why we prefer the latter.

From a mathematical point of view, Definition~\ref{def:cohlp} also has the advantage that it allows for simple and elegant proofs of some well-known results. For example, it follows trivially from our definition of coherence that the domain of a coherent conditional lower prevision can be arbitrarily extended while preserving coherence, whereas deriving this result directly from Equation~\eqref{eq:prop:equivalentToPelessoniAndVicig} is substantially more involved; see for example the proof of \cite[Proposition~1]{Pelessoni:2009co}. Furthermore, our definition also allows for a very natural derivation of the so-called \emph{natural extension} of $\lp$, that is, the most conservative extension of $\lp$ to $\C(\states)$. In particular, instead of having to derive this natural extension directly, Definition~\ref{def:cohlp} allows us to rephrase this problem into a closely related yet simpler question: what is the smallest coherent set of desirable gambles $\desir$ on $\states$ such that $\lp_\desir$ coincides with $\lp$ on $\C$? The answer turns out to be surprisingly simple.

\begin{proposition}\label{prop:smallestSDGfromLP}
Consider a coherent conditional lower prevision $\lp$ on $\C\subseteq\C(\states)$ and let
\begin{equation}\label{eq:AfromLP}
\A_{\lp}
\coloneqq
\big\{
[f-\mu]\ind{B}
\colon
(f,B)\in\C, \mu<\lp(f\vert B)
\big\}
~~\text{and}~~
\E(\lp)\coloneqq\E(\A_{\lp}).
\end{equation}
Then $\E(\lp)$ is a coherent set of desirable gambles on $\states$ and $\lp_{\E(\lp)}$ coincides with $\lp$ on $\C$. Furthermore, for any other coherent set of desirable gambles $\desir$ on $\states$ such that $\lp_{\desir}$ coincides with $\lp$ on $\C$, we have that $\E(\lp)\subseteq\desir$.
\end{proposition}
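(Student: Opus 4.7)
The plan is to extract everything from a single inclusion, namely $\E(\lp)\subseteq\desir$ for any coherent set of desirable gambles $\desir$ on $\states$ such that $\lp_{\desir}$ coincides with $\lp$ on $\C$. By Definition~\ref{def:cohlp}, at least one such $\desir$ exists, so this inclusion will simultaneously establish the coherence of $\E(\lp)$ and its minimality.

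First I would show that $\A_{\lp}\subseteq\desir$. Fix any $(f,B)\in\C$ and any $\mu<\lp(f\vert B)=\lp_{\desir}(f\vert B)$. By the definition~\eqref{eq:LPfromD} of $\lp_{\desir}$ as a supremum, there is some $\mu'\in(\mu,\lp(f\vert B)]$ with $[f-\mu']\ind{B}\in\desir$. Writing $[f-\mu]\ind{B}=[f-\mu']\ind{B}+(\mu'-\mu)\ind{B}$, the second term is a non-negative non-zero gamble (because $B\in\nonemptypower$ and $\mu'-\mu>0$), so it lies in $\desir$ by~\ref{def:SDG:partialgain}; then~\ref{def:SDG:convex} places $[f-\mu]\ind{B}$ in $\desir$. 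Combined with $\gamblespos\subseteq\desir$ (again by~\ref{def:SDG:partialgain}), this gives $\A_{\lp}\cup\gamblespos\subseteq\desir$. Since~\ref{def:SDG:homo} and~\ref{def:SDG:convex} make $\desir$ closed under positive linear combinations, it follows that $\E(\lp)=\posi(\A_{\lp}\cup\gamblespos)\subseteq\desir$.

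With this inclusion in hand, the coherence of $\E(\lp)$ is almost automatic:~\ref{def:SDG:partialgain} holds because $\gamblespos\subseteq\A_{\lp}\cup\gamblespos\subseteq\E(\lp)$; \ref{def:SDG:homo} and \ref{def:SDG:convex} hold by the very definition of $\posi$ in~\eqref{eq:posi}; and~\ref{def:SDG:partialloss} is inherited from $\desir$ via $\E(\lp)\subseteq\desir$. For the identity $\lp_{\E(\lp)}=\lp$ on $\C$, the inequality $\lp_{\E(\lp)}\geq\lp$ on $\C$ is immediate from $\A_{\lp}\subseteq\E(\lp)$ together with the definition of $\A_{\lp}$, while the reverse inequality follows from $\E(\lp)\subseteq\desir$ and $\lp_{\desir}=\lp$ on $\C$. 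The minimality statement is then just the same inclusion argument reapplied to an arbitrary competing $\desir$.

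I do not expect any serious obstacle. The one place that requires care is the passage from $\mu<\lp(f\vert B)$ to $[f-\mu]\ind{B}\in\desir$, where it is essential to insert a slack $\mu'>\mu$ rather than use $\lp(f\vert B)$ directly, since the supremum in~\eqref{eq:LPfromD} need not be attained, and to remember that $B$ is non-empty so that $(\mu'-\mu)\ind{B}$ qualifies as an element of $\gamblespos$.
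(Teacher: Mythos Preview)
Your proposal is correct and follows essentially the same route as the paper: both start from an arbitrary coherent $\desir$ with $\lp_{\desir}=\lp$ on $\C$, use the slack $\mu'<\lp(f\vert B)$ together with~\ref{def:SDG:partialgain} and~\ref{def:SDG:convex} to place $\A_{\lp}$ inside $\desir$, deduce $\E(\lp)\subseteq\desir$, and read off coherence (via~\ref{def:SDG:partialloss} inherited from $\desir$) and the two inequalities for $\lp_{\E(\lp)}=\lp$ on $\C$. The only cosmetic difference is that the paper packages the closure and coherence steps into Lemmas~\ref{lemma:coherenceiffLP4},~\ref{lemma:nestedpropsofposandE} and~\ref{lemma:natextDisD}, whereas you spell them out directly.
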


Abstracting away some technical details, the reason why this result holds should be intuitively clear. First, since conditional lower previsions are interpreted as called-off supremum buying prices, we see that the gambles in $\A_{\lp}$ should be desirable. Combined with~\ref{def:SDG:partialgain}--\ref{def:SDG:convex}, the desirability of the gambles in $\E(\lp)$ then follows.

Since smaller sets of desirable gambles lead to more conservative---pointwise smaller---lower previsions, we conclude that the natural extension of $\lp$ is given by
\begin{equation}\label{eq:naturalextension}
\natexLP(f\vert B)\coloneqq\lp_{\E(\lp)}(f\vert B)
\text{~~for all $(f,B)\in\C(\states)$.}
\end{equation}
The following proposition  provides a formal statement of this result.

\begin{proposition}\label{prop:naturalextension:full}
Let $\lp$ be a coherent conditional lower prevision on $\C\subseteq\C(\states)$. Then $\natexLP$, as defined by Equation~\eqref{eq:naturalextension}, is the pointwise smallest coherent conditional lower prevision on $\C(\states)$ that coincides with $\lp$ on $\C$.
\end{proposition}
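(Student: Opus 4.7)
The plan is to show all three required properties of $\natexLP$ (coherence on $\C(\states)$, agreement with $\lp$ on $\C$, and minimality) by leveraging Proposition~\ref{prop:smallestSDGfromLP}, which already encapsulates the hard technical content. The unifying idea is that, via Definition~\ref{def:cohlp}, coherence of a conditional lower prevision reduces to the existence of a generating coherent set of desirable gambles, and the map $\desir \mapsto \lp_\desir$ defined by~\eqref{eq:LPfromD} is monotone in $\desir$.

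For coherence of $\natexLP$ on $\C(\states)$, I would simply observe that, by construction, $\natexLP = \lp_{\E(\lp)}$ on all of $\C(\states)$. Since Proposition~\ref{prop:smallestSDGfromLP} tells us that $\E(\lp)$ is a coherent set of desirable gambles on $\states$, Definition~\ref{def:cohlp} immediately gives that $\natexLP$ is coherent. For the agreement on $\C$, the second assertion of Proposition~\ref{prop:smallestSDGfromLP} states that $\lp_{\E(\lp)}$ coincides with $\lp$ on $\C$, hence so does $\natexLP$.

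For the minimality part, let $\lp'$ be any coherent conditional lower prevision on $\C(\states)$ that coincides with $\lp$ on $\C$. By Definition~\ref{def:cohlp}, there exists a coherent set of desirable gambles $\desir'$ on $\states$ such that $\lp' = \lp_{\desir'}$ on $\C(\states)$. In particular, $\lp_{\desir'}$ coincides with $\lp$ on $\C$, so the last part of Proposition~\ref{prop:smallestSDGfromLP} gives $\E(\lp) \subseteq \desir'$. Then I would invoke the monotonicity of $\desir \mapsto \lp_\desir$: if $\desir_1 \subseteq \desir_2$, then for every $(f,B) \in \C(\states)$ the set $\{\mu \in \reals : [f-\mu]\ind{B} \in \desir_1\}$ is contained in its counterpart for $\desir_2$, so $\lp_{\desir_1}(f\vert B) \leq \lp_{\desir_2}(f\vert B)$. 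Applied to $\E(\lp) \subseteq \desir'$, this yields $\natexLP(f\vert B) = \lp_{\E(\lp)}(f\vert B) \leq \lp_{\desir'}(f\vert B) = \lp'(f\vert B)$ for every $(f,B) \in \C(\states)$, which is the desired pointwise minimality.

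There is no real obstacle here: all the substantive work was done in Proposition~\ref{prop:smallestSDGfromLP}, and the remaining step amounts to verifying monotonicity of $\lp_\desir$ in $\desir$, which is immediate from~\eqref{eq:LPfromD}. The only point that warrants a moment of care is ensuring that $\natexLP$ is in fact real-valued, but this too follows from the coherence of $\E(\lp)$ via Definition~\ref{def:cohlp} and Proposition~\ref{prop:equivalentToPelessoniAndVicig}, since any conditional lower prevision derived from a coherent set of desirable gambles must be coherent and hence real-valued.
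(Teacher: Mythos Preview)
Your proof is correct and follows essentially the same route as the paper. The paper factors the argument through an auxiliary result (Proposition~\ref{prop:naturalextension}) that handles arbitrary intermediate domains $\C'\supseteq\C$, and then obtains Proposition~\ref{prop:naturalextension:full} as the special case $\C'=\C(\states)$; but the proof of that auxiliary result is exactly the argument you give: coherence and agreement from Proposition~\ref{prop:smallestSDGfromLP}, and minimality via $\E(\lp)\subseteq\desir'$ together with the monotonicity of $\desir\mapsto\lp_\desir$.
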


All in all, we conclude that Definition~\ref{def:cohlp} provides an intuitive as well as mathematically convenient characterisation of Williams-coherence that is furthermore equivalent to the structure-free version of~\cite{Pelessoni:2009co}. 
From a technical point of view, this equivalence will not be essential further on, because most of our arguments will be based on the connection with sets of desirable gambles. 
From a practical point of view though, this equivalence is highly important, because the Williams-coherent conditional lower previsions that are considered in~\citep{Pelessoni:2009co} are well-known to include as special cases a variety of other uncertainty models, including expectations, lower expectations, probabilities and lower probabilities. For that reason, our results can be applied to---and interpreted in terms of---these special cases as well. A detailed discussion of this point is deferred to Sections~\ref{sec:probs} and~\ref{sec:specialcases}; for now, we focus on sets of desirable gambles and conditional lower previsions. We end this section by listing some well-known properties of the latter; see for example~\cite{Pelessoni:2009co},~\cite{Williams:2007eu} and~\cite{Walley:1991vk}. \iftoggle{arxiv}{Proofs---or explicit references to proofs---can be found in Appendix~\ref{app:modellinguncertainty}.}

\begin{proposition}\label{prop:propertiesofLP}
Let $\lp$ be a coherent conditional lower prevision on $\C\subseteq\C(\states)$. Then for any two gambles $f,g\in\gambles$, any two events \mbox{$A,B\in\nonemptypower$}, any real number $\lambda\in\reals$ and any sequence of gambles $\{f_n\}_{n\in\nats}\subseteq\gambles$, whenever the involved conditional lower and upper previsions are well-defined---that is, if the arguments belong to their domain---we have that 
\vspace{5pt}

\begin{enumerate}[label=\emph{LP\arabic*:},ref=LP\arabic*]
\item
$\lp(f\vert B)\geq\inf_{x\in B}f(x)$\label{def:lowerprev:bounded}\hfill\emph{[boundedness]}
\item
$\lp(\lambda f\vert B)=\lambda\lp(f\vert B)$ if $\lambda\geq0$\label{def:lowerprev:homo}\hfill\emph{[non-negative homogeneity]}
\item
$\lp(f+g\vert B)\geq \lp(f\vert B)+\lp(g\vert B)$\label{def:lowerprev:superadditive}\hfill\emph{[superadditivity]}
\item
$\lp(\ind{B}[f-\lp(f\vert A\cap B)]\vert\, A)=0$ if $A\cap B\neq\emptyset$\label{def:lowerprev:GBR}\hfill\emph{[generalised Bayes rule]}
\item
$\lim_{n\to+\infty}\sup\abs{f-f_n}=0\then\lim_{n\to+\infty}\lp(f_n\vert B)=\lp(f\vert B)$
\label{def:lowerprev:uniformcontinuity}\hfill\emph{[uniform continuity]}
\item
$\lp(f+\lambda\vert B)=\lp(f\vert B)+\lambda$\label{def:lowerprev:constantadditivity}\hfill\emph{[constant additivity]}
\item
$f(x)\geq g(x)\text{ for all }x\in B\then\lp(f\vert B)\geq\lp(g\vert B)$\label{def:lowerprev:monotonicity}\hfill\emph{[monotonicity]}
\item
$\lp(f\vert B)\leq-\lp(-f\vert B)=\overline{P}(f\vert B)$\label{def:lowerprev:lowerbelowupper}
\end{enumerate}
\end{proposition}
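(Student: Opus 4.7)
The plan is to fix, via Definition~\ref{def:cohlp}, a coherent set of desirable gambles $\desir$ with $\lp=\lp_\desir$ on $\C$, so that Equation~\eqref{eq:LPfromD} gives the working representation $\lp(f\vert B)=\sup\{\mu\in\reals\colon[f-\mu]\ind{B}\in\desir\}$, and then derive each listed property from~\ref{def:SDG:partialgain}--\ref{def:SDG:partialloss} applied to such witnesses. Finiteness of $\lp(f\vert B)$ follows from LP1 together with the companion bound $\lp(f\vert B)\leq\sup_{x\in B}f(x)$, which one extracts by applying LP1 to $-f$ and then LP8.

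For LP1, every $\mu<\inf_{x\in B}f(x)$ makes $[f-\mu]\ind{B}$ non-negative and (since $B\neq\emptyset$) non-zero, hence desirable by~\ref{def:SDG:partialgain}. LP2, LP3 and LP6 reduce to the algebraic identities $[\lambda f-\lambda\mu]\ind{B}=\lambda[f-\mu]\ind{B}$, $[(f+g)-(\mu+\nu)]\ind{B}=[f-\mu]\ind{B}+[g-\nu]\ind{B}$ and $[(f+\lambda)-\mu]\ind{B}=[f-(\mu-\lambda)]\ind{B}$, combined with~\ref{def:SDG:homo}, \ref{def:SDG:convex} and a change of variables in the supremum (the $\lambda=0$ case of LP2 is handled separately using LP1 and~\ref{def:SDG:partialloss}). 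LP7 follows from $(f-g)\ind{B}\geq 0$: it is either zero or desirable by~\ref{def:SDG:partialgain}, and adding it via~\ref{def:SDG:convex} to a witness $[g-\mu]\ind{B}\in\desir$ yields $[f-\mu]\ind{B}\in\desir$. LP8 is by contradiction: summing witnesses for $\mu<\lp(f\vert B)$ and $\nu<\lp(-f\vert B)$ via~\ref{def:SDG:convex} gives $-(\mu+\nu)\ind{B}\in\desir$, which together with~\ref{def:SDG:partialloss} and $B\neq\emptyset$ forces $\mu+\nu<0$. LP5 is then a sandwich using $f_n-\epsilon_n\leq f\leq f_n+\epsilon_n$ with $\epsilon_n\coloneqq\sup\abs{f-f_n}$, combined with LP7 and LP6.

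The main obstacle is the generalised Bayes rule LP4. Writing $\alpha\coloneqq\lp(f\vert A\cap B)$, the central tool is the pointwise identity
\begin{equation*}
[\ind{B}(f-\alpha)-\nu]\ind{A}=\ind{A\cap B}(f-\alpha-\nu)+(-\nu)\ind{A\setminus B}.
\end{equation*}
For the lower bound $\lp(\ind{B}[f-\alpha]\vert A)\geq 0$, I would fix $\nu<0$ and set $\mu\coloneqq\alpha+\nu<\alpha$; the supremum structure together with~\ref{def:SDG:partialgain} and~\ref{def:SDG:convex} forces $[f-\mu]\ind{A\cap B}=\ind{A\cap B}(f-\alpha-\nu)\in\desir$, and adding the non-negative gamble $(-\nu)\ind{A\setminus B}$ (trivially zero if $A\setminus B=\emptyset$, else desirable by~\ref{def:SDG:partialgain}) via~\ref{def:SDG:convex} places the whole left-hand side in $\desir$, giving $\lp(\ind{B}[f-\alpha]\vert A)\geq\nu$ for every $\nu<0$. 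For the reverse bound, I would assume for contradiction that some $\nu>0$ satisfies $[\ind{B}(f-\alpha)-\nu]\ind{A}\in\desir$ and add $\nu\ind{A\setminus B}\geq 0$ via the same axioms to obtain $[f-(\alpha+\nu)]\ind{A\cap B}\in\desir$, contradicting that $\alpha$ is the supremum of such $\mu$.
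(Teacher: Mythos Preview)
Your proof is correct and, in fact, more self-contained than the paper's. The paper proceeds quite differently: it first observes that \ref{def:lowerprev:monotonicity} is immediate from Definition~\ref{def:cohlp} and Equation~\eqref{eq:LPfromD}, then passes to the natural extension $\natexLP$ on the full domain $\C(\states)$ via Proposition~\ref{prop:naturalextension}, and finally obtains \ref{def:lowerprev:bounded}--\ref{def:lowerprev:GBR} by invoking the equivalence with Williams-coherence (Proposition~\ref{prop:equivalentToPelessoniAndVicig}) together with results of \cite{Pelessoni:2009co} and \cite{Williams:2007eu}, and \ref{def:lowerprev:uniformcontinuity}, \ref{def:lowerprev:constantadditivity}, \ref{def:lowerprev:lowerbelowupper} by citing \cite[Section~2.6.1]{Walley:1991vk} for the unconditional operator $\natexLP(\cdot\vert B)$.

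Your route avoids all of this machinery: you work directly with the representation $\lp(f\vert B)=\sup\{\mu\colon[f-\mu]\ind{B}\in\desir\}$ and derive each property from \ref{def:SDG:partialgain}--\ref{def:SDG:partialloss} by elementary manipulations of witnesses. In particular, your treatment of \ref{def:lowerprev:GBR} via the decomposition $[\ind{B}(f-\alpha)-\nu]\ind{A}=\ind{A\cap B}(f-\alpha-\nu)+(-\nu)\ind{A\setminus B}$ is a clean argument that the paper does not give explicitly. The trade-off is that the paper's approach, while opaque, situates the result within the existing literature on Williams-coherence; yours is logically independent of Propositions~\ref{prop:equivalentToPelessoniAndVicig} and~\ref{prop:naturalextension} and of the three external references, which is arguably preferable for a foundational proposition of this kind. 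One minor remark: the upper bound $\lp(f\vert B)\leq\sup_{x\in B}f(x)$ can also be read off directly from \ref{def:SDG:partialloss} (for $\mu>\sup_{x\in B}f(x)$ the gamble $[f-\mu]\ind{B}$ is non-positive), so you need not route finiteness through \ref{def:lowerprev:lowerbelowupper}.
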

\vspace{-6pt}

\begin{proposition}\label{prop:cohlpifffouraxioms}
Consider a set of events $\B\subseteq\nonemptypower$ that is closed under finite unions and let $\mathcal{F}\subseteq\gambles$  be a linear space of gambles such that $\ind{B}f\in\mathcal{F}$ and $\ind{B}\in\mathcal{F}$ for every $f\in\mathcal{F}$ and $B\in\B$. Now let $\C\coloneqq\{(f,B)\colon f\in\mathcal{F},B\in\B\}$. 
Then a conditional lower prevision $\lp$ on $\C$ is coherent if and only if it is real-valued and satisfies~\emph{\ref{def:lowerprev:bounded}--\ref{def:lowerprev:GBR}}. 
\end{proposition}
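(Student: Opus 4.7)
The ``only if'' direction is immediate: if $\lp$ is coherent, then by Definition~\ref{def:cohlp} it is real-valued, and by Proposition~\ref{prop:propertiesofLP} it satisfies~\ref{def:lowerprev:bounded}--\ref{def:lowerprev:GBR} whenever the arguments lie in $\C$. Since $\mathcal{F}$ is a linear space with $\ind{B}f,\ind{B}\in\mathcal{F}$ for every $f\in\mathcal{F},B\in\B$, and $\B$ is closed under finite unions, the structural conditions on $\C$ ensure that the arguments appearing in~\ref{def:lowerprev:bounded}--\ref{def:lowerprev:GBR} belong to $\C$ whenever their primitive components do.

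For the ``if'' direction, my strategy is to exhibit a coherent set of desirable gambles $\desir$ on $\states$ such that $\lp_\desir=\lp$ on $\C$; coherence of $\lp$ then follows from Definition~\ref{def:cohlp}. The natural candidate is $\desir\coloneqq\E(\A_\lp)=\posi(\A_\lp\cup\gamblespos)$ from Equation~\eqref{eq:AfromLP}. Axioms~\ref{def:SDG:partialgain}--\ref{def:SDG:convex} are immediate: $\gamblespos\subseteq\desir$ yields~\ref{def:SDG:partialgain}, and the cone structure of $\posi$ yields~\ref{def:SDG:homo}--\ref{def:SDG:convex}. The main obstacle is~\ref{def:SDG:partialloss}.

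To verify~\ref{def:SDG:partialloss}, suppose for contradiction that some non-positive gamble lies in $\desir$; then $\sum_{i=1}^n\lambda_i[f_i-\mu_i]\ind{B_i}+h\leq0$ for some $\lambda_i\in\realspos$, $(f_i,B_i)\in\C$, $\mu_i<\lp(f_i|B_i)$, and $h\in\gamblesnonneg$. Set $\alpha_i\coloneqq\lp(f_i|B_i)-\mu_i>0$, $\beta\coloneqq\min_i\lambda_i\alpha_i>0$, and $B^*\coloneqq\cup_{i=1}^n B_i\in\B$. Rearranging yields $\sum_i\lambda_i\ind{B_i}[f_i-\lp(f_i|B_i)]\leq-\sum_i\lambda_i\alpha_i\ind{B_i}-h\leq-\beta$ on $B^*$, since for every $x\in B^*$ at least one $\ind{B_i}(x)=1$ contributes at least $\beta$. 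Applying $\lp(\cdot|B^*)$ and using~\ref{def:lowerprev:bounded} together with the monotonicity consequence of~\ref{def:lowerprev:bounded}--\ref{def:lowerprev:superadditive} gives an upper bound $\leq-\beta$; applying~\ref{def:lowerprev:superadditive},~\ref{def:lowerprev:homo}, and~\ref{def:lowerprev:GBR} termwise (valid since $B_i\subseteq B^*$ and hence $B^*\cap B_i=B_i\neq\emptyset$) gives a lower bound of $\sum_i\lambda_i\cdot0=0$, a contradiction.

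It remains to show $\lp_\desir=\lp$ on $\C$. The inequality $\lp_\desir(f|B)\geq\lp(f|B)$ is immediate because $[f-(\lp(f|B)-\epsilon)]\ind{B}\in\A_\lp\subseteq\desir$ for every $\epsilon>0$. For the reverse inequality, assume $[f-\mu]\ind{B}\in\desir$ with $\mu>\lp(f|B)$ and expand it as a positive combination. Setting $\alpha\coloneqq\mu-\lp(f|B)>0$, $\alpha_i\coloneqq\lp(f_i|B_i)-\mu_i>0$, $\beta\coloneqq\min(\alpha,\min_i\lambda_i\alpha_i)>0$, and $B^{**}\coloneqq B\cup\cup_i B_i\in\B$, an analogous rearrangement yields $\ind{B}[f-\lp(f|B)]-\sum_i\lambda_i\ind{B_i}[f_i-\lp(f_i|B_i)]\geq\beta$ on $B^{**}$. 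Applying $\lp(\cdot|B^{**})$ gives a lower bound $\geq\beta>0$; on the other hand,~\ref{def:lowerprev:superadditive} rewritten as $\lp(a-b|B^{**})\leq\lp(a|B^{**})-\lp(b|B^{**})$, combined with~\ref{def:lowerprev:GBR} applied to $(f,B)$ and to each $(f_i,B_i)$ with conditioning event $B^{**}$ and with~\ref{def:lowerprev:homo} and~\ref{def:lowerprev:superadditive}, gives an upper bound of $0-0=0$, the desired contradiction. The main technical burden throughout is the domain book-keeping needed to keep all relevant gambles in $\mathcal{F}$ and all conditioning events in $\B$ at each step.
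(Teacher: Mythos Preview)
Your proof is correct, but it takes a different route from the paper's. The paper does not construct $\desir=\E(\A_{\lp})$ directly here; instead it invokes Proposition~\ref{prop:equivalentToPelessoniAndVicig} and simply verifies the single inequality~\eqref{eq:prop:equivalentToPelessoniAndVicig}. Concretely, given $\lambda_0,\dots,\lambda_n$ and $(f_0,B_0),\dots,(f_n,B_n)$ with $B=\cup_i B_i\in\B$, the paper applies~\ref{def:lowerprev:superadditive} once to split off the $i=1,\dots,n$ terms, then~\ref{def:lowerprev:homo} and~\ref{def:lowerprev:GBR} to kill each of them (and the $i=0$ term on the left), and finally~\ref{def:lowerprev:bounded} to pass from $\lp(\cdot\vert B)$ to an infimum over $B$. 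That is the whole argument.

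Your approach instead re-derives, for this structured domain, what the proof of Proposition~\ref{prop:equivalentToPelessoniAndVicig} does in general: you show $\E(\A_{\lp})$ satisfies~\ref{def:SDG:partialloss} and that $\lp_{\E(\A_{\lp})}=\lp$, but using \ref{def:lowerprev:bounded}--\ref{def:lowerprev:GBR} in place of~\eqref{eq:prop:equivalentToPelessoniAndVicig}. The core manipulations are the same (GBR makes each $\ind{B_i}[f_i-\lp(f_i\vert B_i)]$ term vanish under $\lp(\cdot\vert B^*)$, superadditivity glues them, boundedness converts to a pointwise bound), so nothing is genuinely new; you just bypass the intermediate characterisation. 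The paper's route is shorter because Proposition~\ref{prop:equivalentToPelessoniAndVicig} is already available; yours is more self-contained. Two small points worth tightening: real-valuedness in the ``only if'' direction comes from Proposition~\ref{prop:equivalentToPelessoniAndVicig}, not from Definition~\ref{def:cohlp} itself; and in your~\ref{def:SDG:partialloss} argument you should explicitly dispose of the $n=0$ case (where the offending gamble lies in $\gamblespos$ and hence cannot be $\leq0$), since otherwise $B^*$ is empty and the rest of the argument does not parse.
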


\begin{corollary}\label{corol:cohlpifffouraxioms}
A conditional lower prevision $\lp$ on $\C(\states)$ is coherent if and only if it is real-valued and satisfies~\emph{\ref{def:lowerprev:bounded}--\ref{def:lowerprev:GBR}}. 
\end{corollary}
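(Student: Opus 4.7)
The plan is to derive this corollary as a direct specialisation of Proposition~\ref{prop:cohlpifffouraxioms}. Specifically, I would take $\B \coloneqq \nonemptypower$ and $\mathcal{F} \coloneqq \gambles$, and then simply verify that these choices satisfy the structural hypotheses of the proposition, after which the result is immediate since $\C(\states) = \gambles \times \nonemptypower = \{(f,B) : f \in \mathcal{F}, B \in \B\}$.

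The verifications are all routine. First, $\B = \nonemptypower$ is closed under finite unions because the union of non-empty sets is non-empty. Second, $\mathcal{F} = \gambles$ is a linear space of gambles, because the pointwise sum of bounded functions is bounded and scalar multiples of bounded functions are bounded. Third, for any $f \in \gambles$ and $B \in \nonemptypower$, the product $\ind{B} f$ is clearly bounded, so it belongs to $\gambles$, and similarly $\ind{B} \in \gambles$ since indicators are bounded. These checks cover exactly the hypotheses required by Proposition~\ref{prop:cohlpifffouraxioms}.

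Applying that proposition with the above choices then yields that a conditional lower prevision $\lp$ on $\C(\states)$ is coherent if and only if it is real-valued and satisfies \ref{def:lowerprev:bounded}--\ref{def:lowerprev:GBR}, which is precisely the statement of the corollary. There is no substantive obstacle here: the whole content of the corollary is in Proposition~\ref{prop:cohlpifffouraxioms}, and the corollary merely records the most important instance where the auxiliary structure is maximal ($\B$ is all non-empty events, $\mathcal{F}$ is all gambles).
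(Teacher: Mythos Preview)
Your proposal is correct and takes essentially the same approach as the paper, which simply records the result as an ``Immediate consequence of Proposition~\ref{prop:cohlpifffouraxioms}.'' Your version just spells out the routine verifications that $\B=\nonemptypower$ and $\mathcal{F}=\gambles$ satisfy the structural hypotheses of that proposition.
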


\begin{corollary}\label{corol:cohlpiffthreeaxioms}
Consider a linear space of gambles $\G\subseteq\gambles$ that includes the constant gamble $1$.
A lower prevision $\lp$ on $\G$ is then coherent if and only if it is real-valued and satisfies~\emph{\ref{def:lowerprev:bounded}--\ref{def:lowerprev:superadditive}}. 
\end{corollary}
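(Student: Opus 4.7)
The plan is to deduce the corollary directly from Proposition~\ref{prop:cohlpifffouraxioms} by choosing $\B\coloneqq\{\states\}$ and $\mathcal{F}\coloneqq\G$. First I would check that the hypotheses of Proposition~\ref{prop:cohlpifffouraxioms} are met: $\B=\{\states\}$ is trivially closed under finite unions; $\G$ is a linear space by assumption; $\ind{\states}$ is the constant gamble $1$, which lies in $\G$ by hypothesis; and $\ind{\states}f=f\in\G$ for every $f\in\G$. With this choice, the domain $\C=\{(f,B)\colon f\in\G,B\in\B\}$ coincides with the set of pairs $(f,\states)$ where $f\in\G$, so up to the notational convention $\lp(f)\coloneqq\lp(f\vert\states)$ a lower prevision on $\G$ is exactly a conditional lower prevision on $\C$. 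Proposition~\ref{prop:cohlpifffouraxioms} therefore yields the equivalence between coherence and the combination of real-valuedness together with \ref{def:lowerprev:bounded}--\ref{def:lowerprev:GBR}.

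The remaining task is to show that, in this unconditional setting, \ref{def:lowerprev:GBR} is automatically implied by \ref{def:lowerprev:bounded}--\ref{def:lowerprev:superadditive}, so that it can be dropped from the list. Since the only event in $\B$ is $\states$, \ref{def:lowerprev:GBR} reduces to $\lp(f-\lp(f))=0$ for every $f\in\G$; note that $f-\lp(f)$ lies in $\G$ because $\G$ is a linear space containing the constant $1$. I would establish constant additivity from \ref{def:lowerprev:bounded}--\ref{def:lowerprev:superadditive}: for any $f\in\G$ and $\lambda\in\reals$, superadditivity gives $\lp(f+\lambda)\geq\lp(f)+\lp(\lambda)$, and boundedness applied to the constant gamble $\lambda$ yields $\lp(\lambda)\geq\lambda$, so $\lp(f+\lambda)\geq\lp(f)+\lambda$. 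The same argument applied to $f+\lambda$ and $-\lambda$ gives $\lp(f)=\lp((f+\lambda)+(-\lambda))\geq\lp(f+\lambda)-\lambda$, hence $\lp(f+\lambda)\leq\lp(f)+\lambda$. Combining these two inequalities yields $\lp(f+\lambda)=\lp(f)+\lambda$, and specialising to $\lambda=-\lp(f)$ gives $\lp(f-\lp(f))=0$, which is precisely \ref{def:lowerprev:GBR} in the present setting.

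For the converse direction, if $\lp$ is coherent then Proposition~\ref{prop:propertiesofLP} already guarantees that it is real-valued and satisfies \ref{def:lowerprev:bounded}--\ref{def:lowerprev:superadditive}, finishing the equivalence. The main obstacle is really just the verification that \ref{def:lowerprev:GBR} collapses to a triviality when the only conditioning event is $\states$; everything else is bookkeeping in the application of Proposition~\ref{prop:cohlpifffouraxioms}.
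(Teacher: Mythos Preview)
Your proposal is correct and follows essentially the same approach as the paper's proof: both specialise Proposition~\ref{prop:cohlpifffouraxioms} with $\B=\{\states\}$ and $\mathcal{F}=\G$, verify the hypotheses, and then argue that \ref{def:lowerprev:GBR} reduces to $\lp(f-\lp(f))=0$ and is implied by \ref{def:lowerprev:bounded}--\ref{def:lowerprev:superadditive}. The only difference is that the paper outsources this last step by citing \cite[Section~2.6.1]{Walley:1991vk} for constant additivity, whereas you derive it directly from boundedness and superadditivity; your version is more self-contained and arguably cleaner.
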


\section{Epistemic Independence}\label{sec:independence}

Having introduced our main tools for modelling uncertainty, the next step towards developing a notion of independent natural extension is to agree on what we mean by independence.

The approach that we adopt here is to define it as an assessment of mutual irrelevance. In particular, we say that $X_1$ and $X_2$ are independent if our uncertainty model for $X_1$ is not affected by conditioning on information about $X_2$, and vice versa. As we will see in Section~\ref{sec:specialcases}, this definition can be applied to a probability measure, and then yields the usual notion of independence. However, and that is what makes this approach powerful and intuitive, it can just as easily be applied to lower previsions, sets of desirable gambles, or any other type of uncertainty model. This type of independence is usually referred to as epistemic independence. The aim of this section is to formalize this concept for the case of two variables, in terms of sets of desirable gambles and conditional lower previsions.

Consider two variables $X_1$ and $X_2$ where, for every $i\in\{1,2\}$, $X_i$ takes values $x_i$ in a non-empty set $\states_i$ that may be uncountably infinite. We assume that $X_1$ and $X_2$ are logically independent, meaning that $X_1 = x_1$ and $X_2 = x_2$ are jointly possible, for all $x_1\in\states_1$ and $x_2\in\states_2$. The corresponding joint variable $X\coloneqq(X_1,X_2)$ therefore takes values $x\coloneqq(x_1,x_2)$ in $\states_1\times\states_2$. In this context, whenever convenient, we will identify $B_1\in\nonemptypoweron{1}$ with $B_1\times\states_2$ and $B_2\in\nonemptypoweron{2}$ with $\states_1\times B_2$. For any two events $B_1\in\nonemptypoweron{1}$ and $B_2\in\nonemptypoweron{2}$, this allows us to use $B_1\cap B_2$ as an intuitive alternative notation for $B_1\times B_2$.  Similarly, for any $i\in\{1,2\}$, we identify $f\in\gambleson{i}$ with its cylindrical extension to $\mathcal{G}(\states_1\times\states_2)$, defined by 
\begin{equation*}
f(x_1,x_2)\coloneqq f(x_i)
\text{~~for all $x=(x_1,x_2)\in\states_1\times\states_2$}.
\end{equation*}
In order to make this explicit, we will then often denote this cylindrical extension by $f(X_i)$. In this way, for example, for any $f\in\gambleson{2}$ and $B\in\poweron{1}$, we can write $f(X_2)\ind{B}(X_1)$ to denote a gamble in $\mathcal{G}(\states_1\times\states_2)$ whose value in $(x_1,x_2)$ is equal to $f(x_2)$ if $x_1\in B$ and equal to zero otherwise. Using these conventions, for any set of gambles $\desir$ on $\states_1\times\states_2$, we define the marginal models\vspace{2pt}
\begin{equation*}
\marg_1(\desir)\coloneqq\{f\in\gambleson{1}\colon f(X_1)\in\desir\}
~~\text{and}~~
\marg_2(\desir)\coloneqq\{f\in\gambleson{2}\colon f(X_2)\in\desir\}
\vspace{2pt}
\end{equation*}
and, for any events $B_1\in\nonemptypoweron{1}$ and $B_2\in\nonemptypoweron{2}$, the conditional models
\vspace{3pt}
\begin{equation*}
\marg_1(\desir\vert B_2)\coloneqq\{f\in\gambleson{1}\colon f(X_1)\ind{B_2}(X_2)\in\desir\}\vspace{-6pt}
\end{equation*}
and
\begin{equation*}
\marg_2(\desir\vert B_1)\coloneqq\{f\in\gambleson{2}\colon f(X_2)\ind{B_1}(X_1)\in\desir\}.\vspace{8pt}
\end{equation*}
Conditioning and marginalisation both preserve coherence: if $\desir$ is a coherent set of desirable gambles on $\states_1\times\states_2$, then $\marg_1(\desir)$ and $\marg_1(\desir\vert B_2)$ are coherent sets of desirable gambles on $\states_1$, and $\marg_2(\desir)$ and $\marg_2(\desir\vert B_1)$ are coherent sets of desirable gambles on $\states_2$.

That said, let us now recall our informal definition of epistemic independence, which was that the uncertainty model for $X_1$ is not affected by conditioning on information about $X_2$, and vice versa. In the context of sets of desirable gambles, this can now be formalized as follows:
\begin{equation*}
\marg_1(\desir\vert B_2)=\marg_1(\desir)
~~\text{and}~~
\marg_2(\desir\vert B_1)=\marg_2(\desir).
\end{equation*}
The only thing that is left to specify are the conditioning events $B_1$ and $B_2$ for which we want this condition to hold. We think that the most intuitive approach is to impose this for every $B_1\in\nonemptypoweron{1}$ and $B_2\in\nonemptypoweron{2}$, and will call this epistemic event-independence. However, this is not what is usually done. The conventional approach, which we will refer to as epistemic atom-independence, is to focus on singleton events of the type $B_1=\{x_1\}$ and $B_2=\{x_2\}$; see for example~\citep{Walley:1991vk} and~\citep{deCooman:2012vba}.\footnote{Readers that are familiar with some of my previous work \citep{de2015credal,debock2017williamsPMLR} may notice that I have changed terminology: what I now call epistemic event- and atom-independence, I previously referred to as epistemic subset- and value-independence. This new terminology was suggested to me by an anonymous reviewer, and I could not but agree that it indeed better reflects the meaning of the respective concepts.}

The main reason why epistemic atom-independence is the conventional go-to definition for epistemic independence is that Walley adopted it in his seminal book~\cite[Section~9.2]{Walley:1991vk}. Walley seems to take this choice for granted; we assume that this is a consequence of his focus on conditional lower previsions whose conditioning events belong to a (finite number of) partition(s). The advantage of using such a partition---and a set of atoms in particular---is that it can be regarded as representing the possible outcomes of an experiment, allowing for a quite natural study of statistical inference and updating. Partitions are also essential if one wiches to impose the controversial property of conglomerability~\citep{Miranda2013}. Nevertheless, we consider this focus on partitions---which is inherent in Walley's approach---to be overly restrictive; we prefer having the option to condition on any possible event, especially since one can always zoom in on a particular partition whenever needed or convenient. This serves as a first reason why we prefer epistemic event-independence over epistemic atom-independence.

Other than that, as we will see further on, epistemic event-independence also has several technical advantages; in fact, this will be one of the main conclusions of this contribution. For now, however, we postpone this debate between event- and atom-independence by adopting a very general approach that subsumes the former two as special cases. In particular, for every $i\in\{1,2\}$, we simply fix a generic set of conditioning events $\B_i\subseteq\nonemptypoweron{i}$. Epistemic atom-independence corresponds to choosing $\B_i=\states_i$, whereas epistemic event-independence corresponds to choosing $\B_i=\nonemptypoweron{i}$.

For sets of desirable gambles, this leads us to the following definition.

\begin{definition}
\label{def:irrSDG}
Let $\desir$ be a coherent set of desirable gambles on $\states_1\times\states_2$. Then $\desir$ is epistemically independent if, for any $i$ and $j$ such that $\{i,j\}=\{1,2\}$:
\begin{equation*}
\marg_i(\desir\vert B_j)=\marg_i(\desir)
\text{~~for all $B_j\in\B_j$.}
\end{equation*}
\end{definition}

For coherent lower previsions, as a prerequisite for defining epistemic independence, we require that the domain $\C\subseteq\C(\states_1\times\states_2)$ is independent, by which we mean that for any $i$ and $j$ such that $\{i,j\}=\{1,2\}$, any pair $(f_i,B_i)\in\C(\states_i)$ and any event $B_j\in\B_j$:
\begin{equation}\label{eq:independentdomain}
(f_i,B_i)\in\C
\asa
(f_i,B_i\cap B_j)\in\C.
\end{equation}
Other than that, we impose no restrictions on $\C$; its elements $(f,B)\in\C$ are for example not restricted to the types that appear in Equation~\eqref{eq:independentdomain}. As a result, the following definition of epistemic independence is applicable beyond the context of lower previsions. For example, by restricting the domain to indicators, we obtain a notion of epistemic independence that applies to conditional lower probabilities. A detailed discussion of these special cases, however, is deferred to Section~\ref{sec:probs}.

\begin{definition}\label{def:epistemicindependence:LP}
Let $\C\subseteq\C(\states_1\times\states_2)$ be an independent domain. A coherent conditional lower prevision $\lp$ on $\C$ is then epistemically independent if, for any $i$ and $j$ such that $\{i,j\}=\{1,2\}$:
\begin{equation*}
\lp(f_i\vert B_i)=\lp(f_i\vert B_i\cap B_j)
\text{~~for all $(f_i,B_i)\in\C(\states_i)\cap\C$ and $B_j\in\B_j$.}
\end{equation*}
\end{definition}
Another important feature of this definition is that $B_j$ is not only irrelevant to the unconditional lower previsions of local gambles $f_i$---in the sense that $\lp(f_i)=\lp(f_i\vert B_j)$---but also to their conditional local lower previsions---in the sense that $\lp(f_i\vert B_i)=\lp(f_i\vert B_i\cap B_j)$. This type of irrelevance is called h-irrelevance; see~\cite{Cozman:2013us} and~\cite{de2015credal}. Note however that this feature is optional within our framework; it only appears when $\C$ is sufficiently large. If instead $B_i=\states_i$ for all $(f_i,B_i)\in\C(\states_i)\cap\C$, then our definition reduces to the more simple requirement that $\lp(f_i)=\lp(f_i\vert B_j)$. The following example illustrates this subtle feature and also demonstrates the difference between epistemic atom- and event-independence.

\begin{example}\label{ex:independenceforlps}
Let $\states_1=\reals$ and $\states_2=\nats$ and consider any coherent conditional lower prevision $\lp$ on $\C\subseteq\C(\states_1\times\states_2)$ that is epistemically independent, with $\C$ an independent domain. For any $(f_1,B_1)\in\C(\states_1)\cap\C$ and $B_2\in\B_2$, it then follows from Definition~\ref{def:epistemicindependence:LP} that $\lp(f_1\vert B_1)=\lp(f_1\vert B_1\cap B_2)$. To make this more concrete, we now consider several examples.

We first consider the most powerful case, where the domain $\C$ is equal to $\C(\states_1\times\states_2)$---the largest possible domain---and where the type of independence that is considered is event-independence.
Now let $f_1\coloneqq\sin(X_1)$ and let $B_2\coloneqq\{2n\colon n\in\nats\}$ be the event that $X_2$ is even. For $B_1=\states_1$, we then find that $\lp(\sin(X_1))=\lp(\sin(X_1)\vert X_2~\mathrm{even})$, meaning that conditioning on the event that $X_2$ is even has no effect on the lower prevision of $\sin(X_1)$. In much the same way, for $B_1=\reals_{\geq0}$, we find that $\lp(\sin(X_1)\vert X_1\geq0)=\lp(\sin(X_1)\vert X_1\geq0\text{ and }X_2\text{ even})$, which means that the conditional lower prevision of $\sin(X_1)$ given $X_1\geq0$ does not change if we additionally condition on the event that $X_2$ is even. This second example provides a nice illustration of the fact that our definition of independence imposes mutual h-irrelevance rather than mutual irrelevance.

If we shrink the domain $\C$ sufficiently, to the extent that every $(f_1,B_1)\in\C(\states_1)\cap\C$ is of the form $(f_1,\states_1)$, then the added value of h-irrelevance disappears because our definition then only imposes assessments of the type $\lp(f_1)=\lp(f_1\vert B_2)$, such as, for example, the assessement $\lp(\sin(X_1))=\lp(\sin(X_1)\vert X_2~\text{even})$ that we have seen before.

The effect of replacing event-independence with atom-independence is also quite substantial. In particular, since $\B_2$ then changes from $\mathcal{P}_{\emptyset}(\nats)$ to $\nats$, we can then no longer condition on the event $B_2\coloneqq\{2n\colon n\in\nats\}$ that $X_2$ is even, because that event is not a singleton. Instead, for atom-indepence, we can only condition on events $B_2$ of the form $\{x_2\}$, such as, for example, the assessement $\lp(\sin(X_1))=\lp(\sin(X_1)\vert X_2=5)$.
\hfill$\lozenge$
\end{example}

\section{The Independent Natural Extension}\label{sec:indnatext}

All of that said, we are now finally ready to introduce our central object of interest, which is the \emph{independent natural extension}. Basically, the question to which this concept provides an answer is: given two local uncertainty models and an assessment of epistemic independence, what then should be the corresponding joint model? The answer depends on the specific framework that is being considered.

Within the framework of sets of desirable gambles, the local uncertainty models are coherent sets of desirable gambles. In particular, for each $i\in\{1,2\}$, we are given a coherent set of desirable gambles $\desir_i$ on $\states_i$. The aim is to combine these local models with an assessment of epistemic independence to obtain a coherent set of desirable gambles $\desir$ on $\states_1\times\states_2$. The first requirement on $\desir$, therefore, is that it should have $\desir_1$ and $\desir_2$ as its marginals, in the sense that $\marg_i(\desir)=\desir_i$ for all $i\in\{1,2\}$. The second is that $\desir$ should be epistemically independent. If both requirements are met, $\desir$ is called an independent product of $\desir_1$ and $\desir_2$. The most conservative among these independent products is called the independent natural extension.

\begin{definition}\label{def:subsetindependentproduct}
An independent product of $\desir_1$ and $\desir_2$ is an epistemically independent coherent set of desirable gambles $\desir$ on $\states_1\times\states_2$ that has $\desir_1$ and $\desir_2$ as its marginals.
\end{definition}

\begin{definition}\label{def:subsetindependentnatex}
The independent natural extension of $\desir_1$ and $\desir_2$ is the smallest independent product of $\desir_1$ and $\desir_2$.
\end{definition}

If all we know is that $\desir$ is epistemically independent and has $\desir_1$ and $\desir_2$ as its marginal models, then the safest choice for $\desir$---the only choice that does not require any additional assessments---is their independent natural extension, provided of course that it exists. In order to show that it always does, we let
\begin{equation}\label{eq:indnatext:SDG}
\desir_1\otimes\desir_2
\coloneqq
\E
\left(
\A_{1\to2}
\cup
\A_{2\to1}
\right),
\vspace{-4pt}
\end{equation}
with
\begin{equation}\label{eq:A12s}
\A_{1\to2}
\coloneqq
\left\{
f_2(X_2)\ind{B_1}(X_1)
\colon
f_2\in\desir_2, 
B_1\in\B_1\cup\{\states_1\}
\right\}
\vspace{-3pt}
\end{equation}
and
\begin{equation}\label{eq:A21s}
\A_{2\to1}
\coloneqq
\left\{
f_1(X_1)\ind{B_2}(X_2)
\colon
f_1\in\desir_1, 
B_2\in\B_2\cup\{\states_2\}
\right\}.
\vspace{8pt}
\end{equation}
The following result establishes that $\desir_1\otimes\desir_2$ is the independent natural extension of $\desir_1$ and $\desir_2$.

\begin{theorem}\label{theo:natext:SDG}
$\desir_1\otimes\desir_2$ is the independent natural extension of $\desir_1$ and $\desir_2$.
\end{theorem}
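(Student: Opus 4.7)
The plan is to verify in turn that (i) $\desir_1\otimes\desir_2$ is a coherent set of desirable gambles, (ii) it is an independent product of $\desir_1$ and $\desir_2$---that is, it has $\desir_1$ and $\desir_2$ as its marginals and satisfies epistemic independence---and (iii) it is contained in any other such product. Axioms~\ref{def:SDG:partialgain}--\ref{def:SDG:convex} for $\desir_1\otimes\desir_2 = \posi(\A_{1\to2}\cup\A_{2\to1}\cup\gamblespos)$ are immediate from the fact that $\posi$ is a convex-cone closure and that $\gamblespos$ is already absorbed in its argument. The main obstacle is axiom~\ref{def:SDG:partialloss}: I must show that no element of $\desir_1\otimes\desir_2$ can satisfy $g\leq 0$.

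To establish~\ref{def:SDG:partialloss}, I would take an arbitrary positive combination $g = \sum_i \alpha_i f_{2,i}(X_2)\ind{B_{1,i}}(X_1) + \sum_j \beta_j f_{1,j}(X_1)\ind{B_{2,j}}(X_2) + h(X_1,X_2)$, where $f_{2,i}\in\desir_2$, $f_{1,j}\in\desir_1$, $h\in\gamblesnonneg$, and the coefficients $\alpha_i,\beta_j$ are positive, and assume $g\leq 0$ for contradiction. Let $\{P_\ell^{(1)}\}$ be the finite partition of $\states_1$ generated by the events $\{B_{1,i}\}$, and $\{P_{\ell'}^{(2)}\}$ the analogous partition of $\states_2$. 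On each rectangle $P_\ell^{(1)}\times P_{\ell'}^{(2)}$, $g$ simplifies to $F_\ell(x_2)+G_{\ell'}(x_1)+h(x_1,x_2)$, where $F_\ell\coloneqq\sum_{i\colon P_\ell^{(1)}\subseteq B_{1,i}}\alpha_i f_{2,i}$ lies in $\desir_2\cup\{0\}$ by~\ref{def:SDG:homo} and~\ref{def:SDG:convex} applied to $\desir_2$, and symmetrically $G_{\ell'}\in\desir_1\cup\{0\}$. By~\ref{def:SDG:partialloss} for $\desir_2$, whenever $F_\ell\neq 0$ its supremum is strictly positive, and similarly for $G_{\ell'}$. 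Combining these positivity facts with the rectangle-wise inequality $F_\ell(x_2)+G_{\ell'}(x_1)+h(x_1,x_2)\leq 0$ and performing a case analysis on which $F_\ell$'s and $G_{\ell'}$'s vanish then produces the contradiction.

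Next I would verify the marginal and independence identities $\marg_i(\desir_1\otimes\desir_2\vert B_j) = \marg_i(\desir_1\otimes\desir_2) = \desir_i$ for all $B_j\in\B_j$. The inclusions $\desir_i\subseteq\marg_i(\desir_1\otimes\desir_2\vert B_j)$ are immediate from the construction of $\A_{j\to i}$, which already contains $f_i(X_i)\ind{B_j}(X_j)$ for every $f_i\in\desir_i$ and every $B_j\in\B_j\cup\{\states_j\}$. The opposite inclusions say that $f_i(X_i)\ind{B_j}(X_j)\in\desir_1\otimes\desir_2$ implies $f_i\in\desir_i$; these are proved by the same rectangular-refinement device as above, with $f_i(X_i)\ind{B_j}(X_j)$ playing the role of $g$, the cylindrical structure of the target forcing cancellations between the $F_\ell$ and $G_{\ell'}$ ingredients, and the coherence axioms of $\desir_i$ delivering the required conclusion.

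Finally, minimality is straightforward. For any independent product $\desir$, axiom~\ref{def:SDG:partialgain} gives $\gamblespos\subseteq\desir$, and epistemic independence combined with $\marg_i(\desir)=\desir_i$ yields $\A_{1\to2}\cup\A_{2\to1}\subseteq\desir$: for instance, $f_2\in\desir_2 = \marg_2(\desir) = \marg_2(\desir\vert B_1)$ unfolds to $f_2(X_2)\ind{B_1}(X_1)\in\desir$ for every $B_1\in\B_1\cup\{\states_1\}$, and symmetrically for $\A_{2\to1}$. Closure of $\desir$ under~\ref{def:SDG:homo} and~\ref{def:SDG:convex} then yields $\E(\A_{1\to2}\cup\A_{2\to1})\subseteq\desir$, as required. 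The hard part throughout is~\ref{def:SDG:partialloss} in Step~(i): it is the only place where a global inequality on $\states_1\times\states_2$ has to be converted into a conflict with the coherence of both marginals simultaneously, via the rectangular refinement; once that is in place, Steps~(ii) and~(iii) follow with comparatively little additional work.
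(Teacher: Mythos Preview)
Your overall plan---coherence, then marginals and independence, then minimality---matches the paper's, and your minimality argument (Step~(iii)) is essentially identical to the paper's. The gap is in Step~(i), axiom~\ref{def:SDG:partialloss}, and it is a genuine one.

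The rectangular refinement is the right first move, and it is exactly what the paper does: on each rectangle $P_\ell^{(1)}\times P_{\ell'}^{(2)}$ one obtains $F_\ell(x_2)+G_{\ell'}(x_1)\leq 0$ (absorbing $h\geq0$). But the promised ``case analysis on which $F_\ell$'s and $G_{\ell'}$'s vanish'' does not close. From $F_\ell\in\desir_2$ you only get $\sup_{x_2\in\states_2}F_\ell>0$, not $\sup_{x_2\in P_{\ell'}^{(2)}}F_\ell>0$ for any particular $\ell'$. Chasing the inequality across rectangles---``$F_\ell$ is positive at some $x_2^*\in P_{\ell'}^{(2)}$, so $G_{\ell'}<0$ on $P_\ell^{(1)}$, so $G_{\ell'}$ is positive at some $x_1^{**}$ in some other $P_{\ell''}^{(1)}$, so $F_{\ell''}<0$ on $P_{\ell'}^{(2)}$, \dots''---cycles without terminating. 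The obstruction is genuinely two-dimensional: writing $a_{\ell\ell'}\coloneqq\sup_{x_2\in P_{\ell'}^{(2)}}F_\ell(x_2)$ and $b_{\ell'\ell}\coloneqq\sup_{x_1\in P_\ell^{(1)}}G_{\ell'}(x_1)$, you have $a_{\ell\ell'}+b_{\ell'\ell}\leq0$ everywhere while each nonzero row of $a$ and of $b$ has a positive entry, and this configuration is not ruled out by case inspection. The paper handles it by a separating-hyperplane argument: it discretizes exactly as you do, forms gambles $h_{1,j}$, $h_{2,i}$ on the finite index sets, and then invokes a lemma of de~Cooman and Miranda (Lemma~\ref{lemma:GertandQuique}) to produce strictly positive probability mass functions $p_1,p_2$ with $\sum_{y_1} p_1(y_1) h_{1,j}(y_1)>0$ and $\sum_{y_2} p_2(y_2) h_{2,i}(y_2)>0$, forcing the product expectation of the discretized function to be simultaneously $\leq0$ and $>0$. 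A residual case---one of the discretized cones $\E(\A_i)$ fails~\ref{def:SDG:partialloss}---is dispatched separately by pushing the failure back to $\desir_i$. Your sketch is missing this linear-algebraic core.

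For the reverse inclusion in Step~(ii)---that $f_i(X_i)\ind{B_j}(X_j)\in\desir_1\otimes\desir_2$ forces $f_i\in\desir_i$---the paper does \emph{not} reuse the rectangular device. Instead it argues by contradiction: if $f_1\notin\desir_1$ (and $f_1\neq0$, which follows from coherence of the product), then $\desir_1^\bullet\coloneqq\E(\desir_1\cup\{-f_1\})$ is again coherent, hence so is $\desir_1^\bullet\otimes\desir_2$ by Step~(i); but this larger product contains both $f_1\ind{B_2}$ and $-f_1\ind{B_2}$, hence $0$, violating~\ref{def:SDG:partialloss}. Your ``same rectangular-refinement device'' here is underspecified, and it is not clear it can be made to work without essentially reproving Step~(i) in a harder form.
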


Similar concepts can be defined for conditional lower previsions as well. In that case, the local uncertainty models are coherent conditional lower previsions. In particular, for every $i\in\{1,2\}$, we are given a coherent conditional lower prevision $\lp_i$ on some freely chosen local domain $\C_i\subseteq\C(\states_i)$. Note that this freedom implies that $\lp_i$ can also be an unconditional; this corresponds to choosing $\C_i\coloneqq\{(f_i,\states_i)\colon f_i\in\G_i\}$ for some $\G_i\subseteq\gambleson{i}.$
In any case, the aim is now to construct an epistemically independent coherent conditional lower prevision $\lp$ on $\C\subseteq\C(\states_1\times\states_2)$ that has $\lp_1$ and $\lp_2$ as its marginals, in the sense that $\lp$ coincides with $\lp_1$ and $\lp_2$ on their local domain: $\lp(f_i\vert B_i)=\lp_i(f_i\vert B_i)$ for all $i\in\{1,2\}$ and $(f_i,B_i)\in\C_i$. As before, a model that meets these criteria is then called an independent product, and the most conservative among them is called the independent natural extension. Clearly, in order for these notions to make sense, the global domain $\C$ must at least include the local domains $\C_1$ and $\C_2$ and must furthermore be independent in the sense of Equation~\eqref{eq:independentdomain}. The definitions and results below take this for granted.

\begin{definition}\label{def:independentproduct:LP}
 An independent product of $\lp_1$ and $\lp_2$ is an epistemically independent coherent conditional lower prevision on $\C$ that has $\lp_1$ and $\lp_2$ as its marginals.
\end{definition}

\begin{definition}\label{def:independentnatex:LP}
The independent natural extension of $\lp_1$ and $\lp_2$ is the point-wise smallest independent product of $\lp_1$ and $\lp_2$.
\end{definition}

Here too, if all we know is that $\lp$ is epistemically independent and has $\lp_1$ and $\lp_2$ as its marginal models, then the safest choice for $\lp$---the only choice that does not require any additional assessments---is the independent natural extension, provided that it exists. The following result establishes that it does, by showing that it is a restriction of the operator $\lp_1\otimes\lp_2$, defined by
\vspace{2pt}
\begin{equation}\label{eq:indnatex:LP}
(\lp_1\otimes\lp_2)(f\vert B)
\coloneqq
\lp_{\desir}(f\vert B)
\text{~~for all $(f,B)\in\C(\states_1\times\states_2)$, with $\desir=\E(\lp_1)\otimes\E(\lp_2)$.}
\vspace{2pt}
\end{equation}

\begin{theorem}\label{theo:natext:LP}
The independent natural extension of $\lp_1$ and $\lp_2$ is the restriction of $\lp_1\otimes\lp_2$ to $\C$.
\end{theorem}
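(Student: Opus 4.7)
The plan is to leverage Theorem~\ref{theo:natext:SDG} together with the correspondence between coherent conditional lower previsions and coherent sets of desirable gambles established by Proposition~\ref{prop:smallestSDGfromLP}. Write $\desir\coloneqq\E(\lp_1)\otimes\E(\lp_2)$, so that $(\lp_1\otimes\lp_2)(f\vert B)=\lp_\desir(f\vert B)$ for every $(f,B)\in\C(\states_1\times\states_2)$. The proof then splits into two steps: first, that the restriction of $\lp_1\otimes\lp_2$ to $\C$ is itself an independent product of $\lp_1$ and $\lp_2$, and second, that any other such independent product dominates it pointwise on $\C$.

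For the first step, coherence is immediate from Definition~\ref{def:cohlp}, since $\desir$ is coherent by Theorem~\ref{theo:natext:SDG}. To verify the marginals, note that Theorem~\ref{theo:natext:SDG} also yields $\marg_i(\desir)=\E(\lp_i)$ for $i\in\{1,2\}$, so that for every $(f_i,B_i)\in\C_i$,
\begin{equation*}
(\lp_1\otimes\lp_2)(f_i\vert B_i)=\sup\{\mu\in\reals\colon[f_i-\mu]\ind{B_i}\in\marg_i(\desir)\}=\lp_{\E(\lp_i)}(f_i\vert B_i)=\lp_i(f_i\vert B_i),
\end{equation*}
where the last equality uses Proposition~\ref{prop:smallestSDGfromLP}. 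Epistemic independence of $\lp_1\otimes\lp_2$ follows from epistemic independence of $\desir$: since $\marg_i(\desir\vert B_j)=\marg_i(\desir)$ for every $B_j\in\B_j$, rewriting $(\lp_1\otimes\lp_2)(f_i\vert B_i\cap B_j)$ as the supremum of those $\mu\in\reals$ for which $[f_i-\mu]\ind{B_i}(X_i)\ind{B_j}(X_j)\in\desir$ yields the required equality with $(\lp_1\otimes\lp_2)(f_i\vert B_i)$.

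For the second step, let $\lp'$ be any independent product of $\lp_1$ and $\lp_2$ on $\C$ and set $\desir'\coloneqq\E(\lp')$. By Proposition~\ref{prop:smallestSDGfromLP}, $\desir'$ is a coherent set of desirable gambles and $\lp_{\desir'}$ agrees with $\lp'$ on $\C$. The crucial claim is that $\desir'\supseteq\A_{1\to2}\cup\A_{2\to1}$: coherence of $\desir'$ then forces $\desir'\supseteq\E(\A_{1\to2}\cup\A_{2\to1})=\desir$, so via~\eqref{eq:LPfromD} we get $\lp'=\lp_{\desir'}\geq\lp_\desir=\lp_1\otimes\lp_2$ on $\C$. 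To verify the claim for $\A_{1\to2}$, I would take a generic gamble $f_2(X_2)\ind{B_1}(X_1)$ with $f_2\in\E(\lp_2)$ and $B_1\in\B_1\cup\{\states_1\}$, decompose $f_2$ via~\eqref{eq:posi} and~\eqref{eq:natextop} as a positive combination of elements of $\A_{\lp_2}$ and non-negative non-zero gambles on $\states_2$, and show each resulting summand lies in $\desir'$. The only interesting summands have the form $[h_2-\mu]\ind{B_2\cap B_1}$ with $(h_2,B_2)\in\C_2$ and $\mu<\lp_2(h_2\vert B_2)$; the independent-domain property~\eqref{eq:independentdomain} places $(h_2,B_2\cap B_1)$ in $\C$, and then the marginal property of $\lp'$ combined with Definition~\ref{def:epistemicindependence:LP} gives $\lp'(h_2\vert B_2\cap B_1)=\lp'(h_2\vert B_2)=\lp_2(h_2\vert B_2)>\mu$, placing the summand in $\A_{\lp'}\subseteq\desir'$. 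The argument for $\A_{2\to1}$ is symmetric.

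The main obstacle will be this last inclusion: one must carefully align the independent-domain property of $\C$ with the epistemic-independence axiom to produce the right elements of $\A_{\lp'}$, appealing only to pairs $(h_2,B_2\cap B_1)$ that the hypotheses actually guarantee to lie in $\C$, while also treating the edge case $B_1=\states_1$ (where the gamble collapses to a cylindrical extension of an element of $\E(\lp_2)$) via the marginal identity that $\desir'$ inherits from $\lp'$ through Proposition~\ref{prop:smallestSDGfromLP}.
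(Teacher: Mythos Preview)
Your proposal is correct and follows essentially the same approach as the paper's proof. The paper packages your first step into separate auxiliary results (Propositions~\ref{prop:productcoherent:LP}, \ref{prop:independent:localnatex} and~\ref{prop:productindependent:LP}), but the underlying arguments---coherence of $\desir$, the marginal identity $\marg_i(\desir)=\E(\lp_i)$, and epistemic independence transferred from $\desir$ to $\lp_\desir$---are identical to yours. For the second step, the paper likewise sets $\desir'\coloneqq\E(\lp')$, decomposes a generic element of $\A_{2\to1}$ (resp.\ $\A_{1\to2}$) via $\E(\lp_1)=\posi(\A_{\lp_1}\cup\mathcal{G}_{>0}(\states_1))$, and handles the two types of summands exactly as you outline, including the case split $B_j=\states_j$ versus $B_j\in\B_j$ and the appeal to the independent-domain assumption to place $(h_i,B_i\cap B_j)$ in $\C$.
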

Interestingly, as can be seen from this result, the choice of the joint domain $\C$ does not affect the resulting independent natural extension, in the sense that any $\C$ that includes $(f,B)$ will lead to the same value of $(\lp_1\otimes\lp_2)(f\vert B)$. For that reason, we will henceforth assume without loss of generality that $\C=\C(\states_1\times\states_2)$.

\section{On the Choice of Conditioning Events}\label{sec:choiceofevents}

The fact that the existence results in the previous section are valid regardless of the choice of $\B_1$ and $\B_2$ should not be taken to mean that this choice does not affect the model. In some cases, it most definitely does. In the remainder of this contribution, we will study the extent to which it does, and how it affects the properties of the resulting notion of independent natural extension.

As a first observation, we note that larger sets of conditioning events correspond to stronger assessments of epistemic independence, and therefore lead to more informative joint models. For example, as can be seen from Equations~\eqref{eq:indnatext:SDG}--\eqref{eq:A21s}, adding events to $\B_1$ and $\B_2$ leads to a larger---more informative---set of desirable gambles $\desir_1\otimes\desir_2$. Similarly, as can be seen from Equation~\eqref{eq:indnatex:LP}, it leads to a joint lower prevision that is higher---and therefore again more informative.
There is one important exception to this observation though, which occurs when we add conditioning events that are a finite disjoint union of other conditioning events. In that case, the resulting notion of independent natural extension does not change.

\begin{proposition}\label{prop:addfiniteunionstoB}
For each $i\in\{1,2\}$, let $\B'_i$ be a superset of $\B_i$ that consists of finite disjoint unions of events in $\B_i$. Replacing $\B_1$ by $\B'_1$ and $\B_2$ by $\B'_2$ then has no effect on the resulting independent natural extension $\desir_1\otimes\desir_2$ or $\lp_1\otimes\lp_2$. 
\end{proposition}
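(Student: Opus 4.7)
Let $\desir_i \coloneqq \marg_i(\desir_1 \otimes \desir_2)$ and let $\A_{1\to 2}, \A_{2\to 1}$ be as in Equations~\eqref{eq:A12s}--\eqref{eq:A21s}, and let $\A'_{1\to 2}, \A'_{2\to 1}$ be the analogous sets with $\B_1,\B_2$ replaced by $\B'_1,\B'_2$. My plan is to show directly that $\desir_1 \otimes \desir_2 = \E(\A_{1\to 2}\cup\A_{2\to 1})$ equals $\E(\A'_{1\to 2}\cup\A'_{2\to 1})$; the corresponding statement for the lower prevision will then follow essentially for free from the definition~\eqref{eq:indnatex:LP}. One inclusion is immediate: since $\B_i \subseteq \B'_i$ by hypothesis, we have $\A_{1\to 2}\subseteq \A'_{1\to 2}$ and $\A_{2\to 1}\subseteq \A'_{2\to 1}$, so applying $\E$ on both sides gives $\desir_1 \otimes \desir_2 \subseteq \E(\A'_{1\to 2}\cup \A'_{2\to 1})$.

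For the converse inclusion, it suffices to show that every element of $\A'_{1\to 2}$ and of $\A'_{2\to 1}$ already lies in $\desir_1 \otimes \desir_2$, since by Theorem~\ref{theo:natext:SDG} the latter is coherent and hence closed under~\ref{def:SDG:homo}--\ref{def:SDG:convex} and contains $\gamblespos$ by~\ref{def:SDG:partialgain}. Consider an arbitrary generator $f_2(X_2)\ind{B'_1}(X_1)$ of $\A'_{1\to 2}$, with $f_2\in\desir_2$ and $B'_1 \in \B'_1 \cup \{\states_1\}$. If $B'_1 = \states_1$ then the gamble already lies in $\A_{1\to 2}$. Otherwise, by hypothesis on $\B'_1$, we can write $B'_1 = \bigsqcup_{k=1}^n B_1^{(k)}$ as a finite disjoint union with each $B_1^{(k)} \in \B_1$, which by disjointness yields the pointwise identity
\begin{equation*}
f_2(X_2)\ind{B'_1}(X_1) = \sum_{k=1}^n f_2(X_2)\ind{B_1^{(k)}}(X_1).
\end{equation*}
Each summand belongs to $\A_{1\to 2} \subseteq \desir_1 \otimes \desir_2$, so by repeated application of~\ref{def:SDG:convex} the sum belongs to $\desir_1 \otimes \desir_2$. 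The argument for $\A'_{2\to 1}$ is symmetric, which completes the proof that $\desir_1 \otimes \desir_2 = \E(\A'_{1\to 2}\cup \A'_{2\to 1})$.

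For the conditional lower prevision part, Equation~\eqref{eq:indnatex:LP} defines $\lp_1 \otimes \lp_2$ as $\lp_\desir$ where $\desir = \E(\lp_1) \otimes \E(\lp_2)$, and by Proposition~\ref{prop:smallestSDGfromLP} the sets $\E(\lp_1)$ and $\E(\lp_2)$ do not depend on the choice of $\B_1,\B_2$; hence by the first part, $\desir$ itself is unchanged when we pass from $\B_1,\B_2$ to $\B'_1,\B'_2$, and so $\lp_1 \otimes \lp_2$ is unchanged as well. I expect the only mildly delicate step to be confirming that the decomposition $\ind{B'_1} = \sum_k \ind{B_1^{(k)}}$ really is pointwise valid, which hinges squarely on the disjointness assumption on $\B'_i$; without disjointness the identity would fail and the sum would overcount, illustrating precisely why the proposition is formulated in terms of \emph{disjoint} unions rather than arbitrary finite unions.
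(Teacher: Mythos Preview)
Your proposal is correct and follows essentially the same approach as the paper: one inclusion is immediate from $\B_i\subseteq\B'_i$, and for the converse you decompose $\ind{B'_1}=\sum_k\ind{B_1^{(k)}}$ via disjointness, place each summand in $\A_{1\to2}\subseteq\desir_1\otimes\desir_2$, and conclude using coherence (the paper invokes Lemmas~\ref{lemma:nestedpropsofposandE} and~\ref{lemma:natextDisD} together with Proposition~\ref{prop:productcoherent:SDG} rather than Theorem~\ref{theo:natext:SDG}, but the content is the same). One small cosmetic slip: your opening line ``Let $\desir_i\coloneqq\marg_i(\desir_1\otimes\desir_2)$'' is circular, since $\desir_1,\desir_2$ are the given inputs---just drop that sentence.
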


As a particular case of this result, it follows that if $\B_i$ is a finite partition of $\states_i$, we can replace it by the generated algebra---minus the empty event. As an even more particular case, if $\states_1$ and $\states_2$ are finite, we find that epistemic atom- and event-independence lead to the same notion of independent natural extension. For that reason, in the finite case, it does not really matter which of these two types of epistemic independence is adopted.

In the infinite case though, we will see that the difference does matter, which requires one to choose between epistemic atom- and event-independence. For lower previsions,~\cite{Miranda2015460} recently adopted epistemic atom-independence in combination with Walley-coherence. Unfortunately, they found that the corresponding notion of independent natural extension does not always exist. They also considered the combination of epistemic atom-independence with Williams-coherence, and argued that the resulting model was too weak.
For the case of lower probabilities,~\cite{Vicig:2000vh} adopted epistemic event-independence in combination with Williams-coherence, showed that the corresponding independent natural extension always exists, and proved that it satisfies factorisation properties. Our results so far can be regarded as a generalisation of the existence results of~\cite{Vicig:2000vh}. 
As we are about to show, his factorisation results can be generalised as well.

\section{Factorisation and External Additivity}\label{sec:factadd}

When $\states_1$ and $\states_2$ are finite, the independent natural extension of two lower previsions $\lp_1$ and $\lp_2$ is well-known to satisfy the properties of factorisation and external additivity~\citep{deCooman:2011ey}. Factorisation, on the one hand, states that
\begin{equation}\label{eq:finitefactorisation}
(\lp_1\otimes\lp_2)(gh)=\lp_1(g\lp_2(h))=
\begin{cases}
\lp_1(g)\lp_2(h)
&\text{ if $\lp_2(h)\geq0$}\\
\overline{P}_1(g)\lp_2(h)
&\text{ if $\lp_2(h)\leq0$},
\end{cases}
\end{equation}
where $g$ is a non-negative gamble on $\states_1$, $h$ is a gamble on $\states_2$ and $\overline{P}_1(g)\coloneqq-\lp_1(-g)$. By symmetry, the role of $1$ and $2$ can of course be reversed. External additivity, on the other hand, states that
\begin{equation}\label{eq:finiteexternaladditivity}
(\lp_1\otimes\lp_2)(f+h)=\lp_1(f)+\lp_2(h)
\end{equation}
where $f$ and $h$ are gambles on $\states_1$ and $\states_2$, respectively. 

Compared to the properties that are satisfied by the joint expectation of a product measure of two probability measures, these notions of factorisation and external additivity are rather weak. For example, for a product measure, additivity is not `external', in the sense that $f$ and $h$ do not have to be defined on separate variables, nor does factorisation require $g$ to be non-negative. Nevertheless, even in this weaker form, these properties remain of crucial practical importance. For example, as explained in \citep{deCooman:2011ey}, factorisation properties such as Equation~\eqref{eq:finitefactorisation}---when applied to more than two variables---are sufficient in order to establish laws of large numbers for lower previsions \citep{deCooman:2008}. As another example, in the context of credal networks, which are Bayesian networks whose local models are partially specified, properties such as Equations~\eqref{eq:finitefactorisation} and~\eqref{eq:finiteexternaladditivity} turned out to be the key to the development of efficient inference algorithms; see for example~\citep{deCooman:2010gd},~\citep{DeBock:2014ts} and~\citep{de2015credal}. Any notion of independent natural extension that aims to extend such algorithms to infinite spaces, therefore, should preserve some suitable version of Equations~\eqref{eq:finitefactorisation} and~\eqref{eq:finiteexternaladditivity}.

The aim of this section is to study the extent to which these equations are satisfied by the notion of independent natural extension that was developed in this paper. As we will see, the answer ends up being surprisingly positive. 

For all $i\in\{1,2\}$, let $\lp_i$ be a coherent conditional lower prevision on $\C_i\subseteq\C(\states_i)$, let $\natexLP_i$ be its natural extension to $\C(\states_i)$, and let $\B_i$ be a subset of $\nonemptypoweron{i}$. The independent natural extension of $\lp_1$ and $\lp_2$ then satisfies the following three properties, the first of which implies the other two as special cases.

\begin{theorem}\label{theo:fact-add-measurable}
Let $\{i,j\}=\{1,2\}$. For any $f\in\gambleson{i}$, $h\in\gambleson{j}$ and $\B_i$-measurable $g\in\mathcal{G}_{\geq0}(\states_i)$, we then have that
\begin{equation*}
(\lp_1\otimes\lp_2)(f+gh)
=\natexLP_i\big(f+g\natexLP_j(h)\big).
\vspace{6pt}
\end{equation*}
\end{theorem}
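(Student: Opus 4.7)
Without loss of generality I set $i=1$ and $j=2$, and write $\desir\coloneqq\E(\lp_1)\otimes\E(\lp_2)$, so that $(\lp_1\otimes\lp_2)(k)=\lp_\desir(k)$ for every $k\in\gambles(\states_1\times\states_2)$ by Theorems~\ref{theo:natext:SDG} and~\ref{theo:natext:LP}. My plan is to establish the identity first for simple $\B_1$-measurable $g$ and then extend it to arbitrary $\B_1$-measurable $g$ via uniform continuity.

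For the $\geq$ inequality, fix a simple $g=\sum_{k=0}^{n}c_k\ind{B_k}$ with $c_k\geq 0$ and $B_k\in\B_1\cup\{\states_1\}$, and fix $\epsilon>0$. Since $\natexLP_2(h-\natexLP_2(h)+\epsilon)=\epsilon>0$, the gamble $h-\natexLP_2(h)+\epsilon$ lies in $\E(\lp_2)$, so each $c_k[h-\natexLP_2(h)+\epsilon](X_2)\ind{B_k}(X_1)$ belongs to $\A_{1\to2}\subseteq\desir$, and closure under positive combinations gives $g(X_1)[h(X_2)-\natexLP_2(h)+\epsilon]\in\desir$. For any $\mu<\natexLP_1(f+g\natexLP_2(h))$, the cylindrical extension of $f+g\natexLP_2(h)-\mu$ lies in $\desir$ via $\marg_1(\desir)=\E(\lp_1)$. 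Summing both desirable gambles and adding the non-negative gamble $\norm{g}\epsilon-g\epsilon$ yields $f+gh-(\mu-\norm{g}\epsilon)\in\desir$, whence $\mu-\norm{g}\epsilon\leq(\lp_1\otimes\lp_2)(f+gh)$; letting $\epsilon\downarrow 0$ and $\mu\uparrow\natexLP_1(f+g\natexLP_2(h))$ closes this direction.

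The $\leq$ direction for simple $g$ is the main obstacle. A symmetric direct desirability argument (using that $-\natexLP_2(-h)-h+\epsilon\in\E(\lp_2)$) only produces the weaker bound $(\lp_1\otimes\lp_2)(f+gh)\leq\natexLP_1(f-g\natexLP_2(-h))$, which can strictly exceed $\natexLP_1(f+g\natexLP_2(h))$. My plan is therefore to invoke the envelope characterisation $\natexLP_i(k)=\inf\{P_i(k):P_i\text{ a coherent linear prevision on }\gambleson{i}\text{ with }P_i\geq\natexLP_i\}$. For any such pair $P_1,P_2$, the inclusions $\E(P_i)\supseteq\E(\lp_i)$ together with monotonicity of the construction in~\eqref{eq:indnatext:SDG} yield $\E(P_1)\otimes\E(P_2)\supseteq\desir$, and hence $(P_1\otimes P_2)\geq(\lp_1\otimes\lp_2)$ pointwise. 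A finitely additive Fubini argument, using linearity of $P_1,P_2$ and the simple form of $g$, then gives $(P_1\otimes P_2)(f+gh)=P_1(f)+P_1(g)P_2(h)=P_1(f+gP_2(h))$. Taking the infimum first over $P_2$ (using $P_1(g)\geq 0$) and then over $P_1$ produces the desired upper bound $\natexLP_1(f+g\natexLP_2(h))$.

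Finally, for arbitrary $\B_1$-measurable $g\in\mathcal{G}_{\geq0}(\states_1)$, Definition~\ref{def:measurable:uniform} provides non-negative simple $\B_1$-measurable $g_n$ with $g_n\to g$ uniformly. Since $h$ is bounded and $\natexLP_2(h)$ is constant, $f+g_nh\to f+gh$ and $f+g_n\natexLP_2(h)\to f+g\natexLP_2(h)$ uniformly; the uniform continuity property~\ref{def:lowerprev:uniformcontinuity} applied to the coherent lower previsions $\lp_1\otimes\lp_2$ and $\natexLP_1$ then propagates the identity from the simple case to the limit.
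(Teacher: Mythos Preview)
Your overall architecture matches the paper's: prove the identity for simple $\B_1$-measurable $g$, then pass to the limit via~\ref{def:lowerprev:uniformcontinuity}. Your direct desirability argument for the $\geq$ direction is sound (modulo trivialities when some $c_k=0$ or $g$ is constant), and the uniform-limit step is exactly as in the paper.

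The gap is in your $\leq$ direction. You assert that ``a finitely additive Fubini argument, using linearity of $P_1,P_2$ and the simple form of $g$'' yields $(P_1\otimes P_2)(f+gh)=P_1(f+gP_2(h))$. But $P_1\otimes P_2$ is the \emph{lower-prevision} independent natural extension of the two linear previsions; it is in general \emph{not} a linear prevision itself (see Section~\ref{sec:specialcases}, in particular Example~\ref{ex:noproduct} and Proposition~\ref{prop:dominatedbynested}). So you cannot invoke Fubini, and naive additivity is unavailable: you only have super\-additivity. The equality you claim is true, but it is precisely the nontrivial content of the $\leq$ direction, not something you can take for granted.

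The paper closes this gap by reusing the $\geq$ direction. Since $P_1,P_2$ are coherent, Lemma~\ref{lemma:fact-add-simple-geq} applies to them and to $-f,-h$:
\[
(P_1\otimes P_2)(-f-gh)=(P_1\otimes P_2)\big((-f)+g(-h)\big)\geq P_1\big(-f+gP_2(-h)\big).
\]
Combining this with~\ref{def:lowerprev:lowerbelowupper} and the linearity of $P_1,P_2$ gives $(P_1\otimes P_2)(f+gh)\leq P_1(f+gP_2(h))$. The paper then picks $P_2$ with $P_2(h)=\natexLP_2(h)$ and $P_1$ with $P_1(f+g\natexLP_2(h))=\natexLP_1(f+g\natexLP_2(h))$ (both available by Proposition~\ref{prop:lowerenvelope:onlyif:full}), which is equivalent to---but slightly cleaner than---your iterated infimum. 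In short: replace your ``Fubini'' sentence with an appeal to the $\geq$ inequality applied to $-f,-h$ and conjugacy, and your proof goes through.
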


\begin{corollary}[Factorisation]\label{corol:fact-measurable}
Let $\{i,j\}=\{1,2\}$. For any $h\in\gambleson{j}$ and any $g\in\mathcal{G}_{\geq0}(\states_i)$ that is $\B_i$-measurable, we then have that
\vspace{-5pt}
\begin{equation*}
(\lp_1\otimes\lp_2)(gh)
=\natexLP_i\big(g\natexLP_j(h)\big)
=
\begin{cases}
\natexLP_i(g)\natexLP_j(h)
&\text{ if $\natexLP_j(h)\geq0$;}\\
\natexUP_i(g)\natexLP_j(h)
&\text{ if $\natexLP_j(h)\leq0$.}
\end{cases}
\vspace{6pt}
\end{equation*}
\end{corollary}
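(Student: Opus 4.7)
The plan is to derive this corollary directly from Theorem~\ref{theo:fact-add-measurable} by specialising $f$ to the zero gamble, and then to dispose of the second equality by a constants-factor computation based on the basic coherence properties collected in Proposition~\ref{prop:propertiesofLP}.

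First I would take $f=0\in\gambleson{i}$ in Theorem~\ref{theo:fact-add-measurable}. Since the zero gamble is trivially bounded and real-valued, the hypothesis is met, and the theorem immediately yields
\begin{equation*}
(\lp_1\otimes\lp_2)(gh)=\natexLP_i\!\big(g\,\natexLP_j(h)\big),
\end{equation*}
which is the first equality of the corollary. Note that here $\natexLP_j(h)$ is a fixed real number $c$ (finiteness is guaranteed because $\natexLP_j$ is coherent on $\C(\states_j)$ by Proposition~\ref{prop:naturalextension:full} and Corollary~\ref{corol:cohlpifffouraxioms}), so that $g\,\natexLP_j(h)$ is simply the rescaled gamble $cg\in\gambleson{i}$.

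Next I would evaluate $\natexLP_i(cg)$ in the two sign cases using that $\natexLP_i$ is coherent on $\C(\states_i)$. If $c\geq 0$, non-negative homogeneity~\ref{def:lowerprev:homo} gives $\natexLP_i(cg)=c\,\natexLP_i(g)$, which is the first branch. If $c\leq 0$, write $c=-\lambda$ with $\lambda\geq 0$ and use the conjugacy relation~\ref{def:lowerprev:lowerbelowupper} together with~\ref{def:lowerprev:homo} applied to the upper prevision:
\begin{equation*}
\natexLP_i(cg)=\natexLP_i(-\lambda g)=-\natexUP_i(\lambda g)=-\lambda\,\natexUP_i(g)=c\,\natexUP_i(g),
\end{equation*}
which is the second branch. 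Substituting back $c=\natexLP_j(h)$ gives the stated case-split.

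Essentially all of the heavy lifting is concentrated in Theorem~\ref{theo:fact-add-measurable}, so once that result is in hand this corollary is mechanical; there is no real obstacle in the derivation itself. The only minor point worth noting is that the two sign cases overlap at $\natexLP_j(h)=0$, which is consistent because both formulas then evaluate to $0$ by~\ref{def:lowerprev:homo}.
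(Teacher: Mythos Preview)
Your proof is correct and follows essentially the same route as the paper: specialise Theorem~\ref{theo:fact-add-measurable} with $f=0$ to get the first equality, then use non-negative homogeneity~\ref{def:lowerprev:homo} and the conjugacy $\natexUP_i(g)=-\natexLP_i(-g)$ to split into the two sign cases. The only cosmetic difference is that for $\natexLP_j(h)\leq0$ the paper applies~\ref{def:lowerprev:homo} directly to $\natexLP_i$ with $\lambda=-\natexLP_j(h)$ and the gamble $-g$, whereas you route the same computation through the upper prevision; the arithmetic is identical.
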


\begin{corollary}[External additivity]\label{corol:add}
For any $f\in\gambleson{1}$ and $h\in\gambleson{2}$, we have that
\begin{equation*}
{(\lp_1\otimes\lp_2)(f+h)
=\natexLP_1(f)+\natexLP_2(h)}.
\vspace{2pt}
\end{equation*}
\end{corollary}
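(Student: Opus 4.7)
The plan is to derive Corollary~\ref{corol:add} as an immediate consequence of Theorem~\ref{theo:fact-add-measurable} by making the strategic choice $g \equiv 1$ and then peeling off the resulting constant via constant additivity.

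First I would observe that the constant gamble $g \equiv 1$ lies in $\mathcal{G}_{\geq0}(\states_1)$ and is a simple $\B_1$-measurable gamble for every $\B_1 \subseteq \nonemptypoweron{1}$: take $c_0 = 1$ and $n = 0$ in Definition~\ref{def:measurable:simple}, so that no indicators from $\B_1$ are needed at all. In particular $g$ satisfies the hypothesis of Theorem~\ref{theo:fact-add-measurable} regardless of the choice of $\B_1$.

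Applying Theorem~\ref{theo:fact-add-measurable} with $i=1$, $j=2$ and this choice of $g$ then yields
\begin{equation*}
(\lp_1\otimes\lp_2)(f+h)
=(\lp_1\otimes\lp_2)(f+gh)
=\natexLP_1\bigl(f+g\,\natexLP_2(h)\bigr)
=\natexLP_1\bigl(f+\natexLP_2(h)\bigr).
\end{equation*}
Since $\lp_2$ is coherent, Proposition~\ref{prop:naturalextension:full} guarantees that $\natexLP_2$ is a coherent conditional lower prevision on $\C(\states_2)$, hence real-valued by Proposition~\ref{prop:equivalentToPelessoniAndVicig}, so $\natexLP_2(h)$ is a real constant. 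Invoking constant additivity~\ref{def:lowerprev:constantadditivity} for the coherent natural extension $\natexLP_1$ then gives $\natexLP_1(f+\natexLP_2(h)) = \natexLP_1(f)+\natexLP_2(h)$, which is the desired identity.

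There is no real obstacle here; the whole argument is a one-line specialisation of Theorem~\ref{theo:fact-add-measurable} combined with constant additivity. The only thing worth flagging is the (trivial) verification that the constant gamble qualifies as simple $\B_i$-measurable even when $\B_i = \emptyset$, so that the corollary holds independently of the choice of conditioning events adopted for epistemic independence.
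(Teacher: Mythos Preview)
Your proof is correct and follows essentially the same approach as the paper's: set $g\coloneqq1$, observe that this is a (simple) $\B_1$-measurable non-negative gamble, apply Theorem~\ref{theo:fact-add-measurable}, and finish with constant additivity~\ref{def:lowerprev:constantadditivity} of the coherent natural extension $\natexLP_1$. Your explicit verification that $g\equiv1$ is simple $\B_1$-measurable even when $\B_1=\emptyset$ is a nice touch that the paper leaves implicit.
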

In each of these results, if the local domains $\C_1$ and $\C_2$ are sufficiently large---that is, if they include the gambles that appear in the statement of the results---it follows from Proposition~\ref{prop:naturalextension:full} that $\natexLP_i$ and $\natexLP_j$ can be replaced by $\lp_i$ and $\lp_j$, respectively, and similarly for $\natexUP_i$ and $\overline{P}_i$.

That said, let us now go back to the question of whether or not Equations~\eqref{eq:finitefactorisation} and~\eqref{eq:finiteexternaladditivity} can be generalised  to the case of infinite spaces. For the case of external additivity, it clearly follows from Corollary~\ref{corol:add} that the answer is fully positive.
Furthermore, this conclusion holds regardless of our choice for $\B_1$ and $\B_2$; they can even be empty. For factorisation, the answer does depend on $\B_1$ and $\B_2$. If we adopt epistemic event-independence---that is, if we choose $\B_1=\nonemptypoweron{1}$ and $\B_2=\nonemptypoweron{2}$---it follows from Corollaries~\ref{corol:measurable:sufficient:allsets} and~\ref{corol:fact-measurable} that the answer is again fully positive, because $\nonemptypoweron{i}$-measurability then holds trivially. If $\B_1\cup\{\emptyset\}$ and $\B_2\cup\{\emptyset\}$ are sigma fields, the answer remains fairly positive as well, because Proposition~\ref{prop:measurability:equivalenceforsigmafield} then implies that it suffices for $g$ to be measurable in the usual, measure-theoretic sense.

\begin{example}\label{ex:factorisationforreals}
Let $\states_1=\states_2=\reals$ and let $\B_1=\B_2=\B$, with $\B=\B^*\setminus\{\emptyset\}$  and with $\B^*$ the $\sigma$-algebra of Lebesgue measurable subsets of $\reals$. Furthermore, let $g\in\G_{\geq0}(\states_1)$ be the indicator of the nonnegative reals, defined for all $x_1\in\reals$ by $g(x_1)\coloneqq1$ if $x_1\geq0$ and $g(x_1)\coloneqq0$ otherwise, and let $h\in\G(\states_2)$ be defined by $h(x_2)\coloneqq x_2^3$ for all $x_2\in\reals$. We then know from Example~\ref{ex:onthereals} that $g$ and $h$ are both Lebesgue-measurable and therefore also $\B$-measurable. Therefore, for any two coherent conditional lower previsions $\lp_1$ on $\C(\states_1)$ and $\lp_2$ on $\C(\states_2)$, Corollary~\ref{corol:fact-measurable} implies that
\begin{equation*}
(\lp_1\otimes\lp_2)(gh)=
\begin{cases}
\lp_1(g)\lp_2(h)&\text{ if $\lp_2(h)\geq0$;}\\
\overline{P}_1(g)\lp_2(h)&\text{ if $\lp_2(h)\leq0.$}
\end{cases}
\vspace{-8pt}
\end{equation*}
\hfill$\lozenge$
\end{example}

However, these positive conclusions do not apply if we adopt epistemic atom-independence---that is, if we choose $\B_1=\states_1$ and $\B_2=\states_2$---because our factorisation result then requires $g$ to be $\states_i$-measurable, which, as we know from Example~\ref{ex:oddandeven}, is a rather strong requirement that easily fails. Nevertheless, the factorisation properties that do hold for atom-independence are stronger than what is suggested in~\cite{Miranda2015460}. In order to illustrate this, we consider the following adaptation of their Example 4.

\begin{example}\label{ex:oneovern:factorisation}
Let $\states_1=\states_2=\nats$ and let $\lp_1$ and $\lp_2$ be coherent lower previsions on $\gambleson{1}$ and $\gambleson{2}$, respectively. Let $g\in\mathcal{G}_{\geq0}(\states_1)$ and $h\in\mathcal{G}_{\geq0}(\states_2)$ be defined by $g(x_1)\coloneqq\nicefrac{1}{x_1}$ and $h(x_2)\coloneqq\nicefrac{1}{x_2}$~~for all $x_1\in\states_1$ and $x_2\in\states_2$. We then know from Example~\ref{ex:oneovern} that the gambles $g$ and $h$ are both $\mathcal{P}_{\emptyset}(\nats)$-measurable and $\nats$-measurable. Furthermore, since $g$ and $h$ are nonnegative,~\ref{def:lowerprev:bounded} implies that $\lp_1(g)\geq0$ and $\lp_2(h)\geq0$. Hence, regardless of whether we adopt epistemic event- or atom-independence, we can apply Corollary~\ref{corol:fact-measurable} to find that $(\lp_1\otimes\lp_2)(gh)
=
\lp_1(g)\lp_2(h)$.
\hfill$\lozenge$
\end{example}

For readers that are familiar with the work of~\cite{Miranda2015460}, the final conclusion of this example may seem surprising at first, because in their Example 4, Miranda and Zaffalon show that for the same two gambles $g$ and $h$, for atom-independence and Williams-coherence, the independent natural extension assigns lower prevision zero to $gh$, regardless of the chosen local models $\lp_1$ and $\lp_2$. In contrast, our example above concludes that this lower prevision is equal to the product of $\lp_1(g)$ and $\lp_2(h)$. This apparent contradiction is a consequence of the fact that Miranda and Zaffalon do not require the independent natural extension of $\lp_1$ and $\lp_2$ to have $\lp_1$ and $\lp_2$ as its marginals. Instead---using their notation and terminology---they only require weak coherence with $\lp_1(\cdot\vert X_2)$ and $\lp_2(\cdot\vert X_1)$; see~\citep{Miranda2015460} for more information. As we can see here, this leads to a notion of independent natural extension that satisfies fewer factorisation properties. Our Definitions~\ref{def:independentproduct:LP} and~\ref{def:independentnatex:LP} avoid this, by explicitly imposing that the independent natural extension of $\lp_1$ and $\lp_2$ should have $\lp_1$ and $\lp_2$ as its marginals.

Still, even with our strengthened definition, the issue remains that for atom-independence, our factorisation result requires the stringent assumption that $g$ is $\states_i$-measurable. This issue is fundamental because, as our next example demonstrates, it is not just a feature of Corollary~\ref{corol:fact-measurable} but rather an inherent property of atom-independence: epistemic atom-independence indeed leads to weaker factorisation properties.

\begin{example}\label{ex:oddandeven:factorisation}
Let $\states_1=\states_2=\nats$ and let $g=\ind{\mathrm{odd}}\in\mathcal{G}_{\geq0}(\states_1)$ and $h=\ind{\mathrm{even}}\in\mathcal{G}_{\geq0}(\states_2)$, with $\ind{\mathrm{odd}}$ and $\ind{\mathrm{even}}$ defined as in Example~\ref{ex:oddandeven}. Furthermore, let $\lp_1$ be a coherent lower prevision on $\gambleson{1}$ and let $\lp_2$ be a coherent lower prevision on $\gambleson{2}$. Since $g$ and $h$ are nonnegative,~\ref{def:lowerprev:bounded} then implies that $\lp_1(g)\geq0$ and $\lp_2(h)\geq0$. 
Therefore, and because we know from Example~\ref{ex:oddandeven} that $g$ and $h$ are $\mathcal{P}_{\emptyset}(\nats)$-measurable, we can apply Corollary~\ref{corol:fact-measurable} to find that for event-independence: $(\lp_1\otimes\lp_2)(gh)=\lp_1(g)\lp_2(h)$. However, unfortunately, this corollary cannot be applied for atom-independence, because we know from Example~\ref{ex:oddandeven} that $g$ and $h$ are not $\nats$-measurable. 

Of course, one could still believe that factorisation can be established in some other way, and that it is simply Corollary~\ref{corol:fact-measurable} that is lacking in power, rather than the concept of atom-independence itself. This is however not the case: for atom-independence, as we will demonstrate in Example~\ref{ex:oddandeven:lower}, $(\lp_1\otimes\lp_2)(\ind{\mathrm{odd}}(X_1)\ind{\mathrm{even}}(X_2))$ can be strictly smaller than $\lp_1(\ind{\mathrm{odd}}(X_1))\lp_2(\ind{\mathrm{even}}(X_2))$\hfill$\lozenge$
\end{example}

Because of these weak factorisation properties, we think that for the case of infinite spaces, when it comes to choosing between epistemic atom- and event-independence, the latter should be preferred over the former. That is not the only reason though. There is also a second, closely related reason, which is that event-independence leads to much more informative inferences. However, in order to explain and demonstrate that, we first need to establish a connection between conditional lower previsions, probabilities and expectations, which is what we now set out to do.

\section{Connecting lower previsions with expectations and probabilities}\label{sec:probs}

The key to understanding the connection between lower previsions, expectations and probabilities is to consider conditional lower previsions that are self-conjugate, in the sense that they coincide with their corresponding upper prevision. In that case, we simply refer to them as \emph{conditional previsions} and denote them by $\pr$ instead of $\lp$. 

\begin{definition}[Conditional prevision]\label{def:prev}
A conditional prevision $\pr$ on $\C\subseteq\C(\states)$ is a conditional lower prevision on $\C$ that is self-conjugate, in the sense that 
\begin{equation}
(-f,B)\in\C
~~\text{and}~~
\pr(f\vert B)=-\pr(-f\vert B)
~~\text{for all $(f,B)\in\C$.}
\label{eq:self-conjugate}
\vspace{3pt}
\end{equation}
\end{definition}

\emph{Unconditional previsions} correspond to a special case. First, if $B=\states$, then similarly to what we did for conditional lower previsions, we adopt the shorthand notation $\pr(f)\coloneqq \pr(f\vert\states)$ and call $\pr(f)$ the prevision of $f$. Second, if $B=\states$ for all $(f,B)\in\C$, meaning that there is some $\G\subseteq\gambles$ such that $\C=\{(f,\states)\colon f\in\G\}$, we regard $\pr$ as an operator on $\G$ and then call $\pr$ a(n unconditional) \emph{prevision}.

If a conditional prevision is coherent, we refer to it as a \emph{conditional linear prevision}. Similarly, coherent (unconditional) previsions are called \emph{linear previsions}.

\begin{definition}[Conditional linear prevision]\label{def:linearprev}
A conditional linear prevision $\pr$ on $\C\subseteq\C(\states)$ is a coherent conditional prevision on $\C$.
\end{definition}

The reason for this terminology, quite obviously, is that conditional linear previsions can be shown to be linear operators. In fact, they satisfy various other convenient properties as well, which, for the sake of completeness, are listed below.

\begin{proposition}\label{prop:propertiesofP}
Let $\pr$ be a conditional linear prevision on $\C\subseteq\C(\states)$. Then for any two gambles $f,g\in\gambles$, any two events \mbox{$A,B\in\nonemptypower$}, any real number $\lambda\in\reals$ and any sequence of gambles $\{f_n\}_{n\in\nats}\subseteq\gambles$, whenever the involved conditional previsions are well-defined---that is, if the arguments belong to their domain---we have that 
\vspace{5pt}

\begin{enumerate}[label=\emph{P\arabic*:},ref=P\arabic*]
\item
$\pr(f\vert B)\geq\inf_{x\in B}f(x)$\label{def:prev:bounded}\hfill\emph{[boundedness]}
\item
$\pr(\lambda f\vert B)=\lambda\pr(f\vert B)$\label{def:prev:homo}\hfill\emph{[homogeneity]}
\item
$\pr(f+g\vert B)=\pr(f\vert B)+\pr(g\vert B)$\label{def:prev:additive}\hfill\emph{[additivity]}
\item
$\pr(\ind{B}f\vert A)=\pr(f\vert A\cap B)\pr(B\vert A)$ if $A\cap B\neq\emptyset$\label{def:prev:GBR}\hfill\emph{[Bayes rule]}
\item
$\lim_{n\to+\infty}\sup\abs{f-f_n}=0\then\lim_{n\to+\infty}\pr(f_n\vert B)=\pr(f\vert B)$
\label{def:prev:uniformcontinuity}\hfill\emph{[uniform continuity]}
\item
$\pr(f+\lambda\vert B)=\pr(f\vert B)+\lambda$\label{def:prev:constantadditivity}\hfill\emph{[constant additivity]}
\item
$f(x)\geq g(x)\text{ for all }x\in B\then\pr(f\vert B)\geq\pr(g\vert B)$\label{def:prev:monotonicity}\hfill\emph{[monotonicity]}
\end{enumerate}
\vspace{0pt}
\end{proposition}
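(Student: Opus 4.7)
The plan is to exploit the fact that a conditional linear prevision $P$ is, by Definition~\ref{def:linearprev}, already a coherent conditional lower prevision, and then use self-conjugacy (Equation~\eqref{eq:self-conjugate}) to upgrade the inequality-type statements of Proposition~\ref{prop:propertiesofLP} into the equality-type statements we need. Concretely, properties \ref{def:prev:bounded}, \ref{def:prev:uniformcontinuity}, \ref{def:prev:constantadditivity} and \ref{def:prev:monotonicity} are immediate, since they are literally the restatements of \ref{def:lowerprev:bounded}, \ref{def:lowerprev:uniformcontinuity}, \ref{def:lowerprev:constantadditivity} and \ref{def:lowerprev:monotonicity} applied to $P$ in its role as a coherent conditional lower prevision.

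The remaining three properties require the self-conjugacy bridge. For \ref{def:prev:homo}, the case $\lambda\geq0$ is \ref{def:lowerprev:homo}; for $\lambda<0$ I would write $\lambda f = -|\lambda|f$, apply self-conjugacy to get $P(\lambda f\vert B) = -P(|\lambda| f\vert B)$, and then use \ref{def:lowerprev:homo} on the non-negative factor $|\lambda|$. For \ref{def:prev:additive}, I would apply \ref{def:lowerprev:superadditive} twice: once to $f,g$ to get $P(f+g\vert B)\geq P(f\vert B)+P(g\vert B)$, and once to $-f,-g$ to get $P(-f-g\vert B)\geq P(-f\vert B)+P(-g\vert B)$; self-conjugacy turns the second inequality into the reverse of the first, giving equality. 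Throughout, the caveat ``whenever the involved conditional previsions are well-defined'' lets me freely assume that whenever $(f,B)\in\C$ the pair $(-f,B)\in\C$ as well (which is explicitly part of Equation~\eqref{eq:self-conjugate}), and that the sums and scalings land in the domain.

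For \ref{def:prev:GBR} I would combine the generalised Bayes rule \ref{def:lowerprev:GBR} with the already-established \ref{def:prev:homo} and \ref{def:prev:additive}. Setting $c\coloneqq P(f\vert A\cap B)$, \ref{def:lowerprev:GBR} gives
\begin{equation*}
0 \;=\; P\bigl(\ind{B}[f-c]\,\big\vert\, A\bigr) \;=\; P\bigl(\ind{B}f - c\ind{B}\,\big\vert\, A\bigr),
\end{equation*}
and then additivity and homogeneity together turn the right-hand side into $P(\ind{B}f\vert A) - c\,P(\ind{B}\vert A)$, which rearranges to \ref{def:prev:GBR} upon noting that $P(\ind{B}\vert A)=P(B\vert A)$ by the usual identification of events with indicators.

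The main obstacle I anticipate is not conceptual but bookkeeping: one must verify at each step that the gambles being added, scaled or conditioned on actually lie in $\C$, so that the invoked coherent lower prevision properties apply. This is handled by the blanket ``whenever well-defined'' clause in the statement, plus the fact that Equation~\eqref{eq:self-conjugate} guarantees closure of $\C$ under negation of the gamble component; beyond that, the derivations are short applications of \ref{def:lowerprev:bounded}--\ref{def:lowerprev:GBR} combined with self-conjugacy.
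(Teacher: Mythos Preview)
Your proposal is correct and takes essentially the same approach as the paper: reduce to Proposition~\ref{prop:propertiesofLP} via coherence, then use self-conjugacy to upgrade \ref{def:lowerprev:homo} and \ref{def:lowerprev:superadditive} to \ref{def:prev:homo} and \ref{def:prev:additive}, and finally combine \ref{def:lowerprev:GBR} with the just-established linearity to obtain \ref{def:prev:GBR}. The only cosmetic difference is the order in which you apply self-conjugacy versus non-negative homogeneity in the $\lambda<0$ case of \ref{def:prev:homo}; the paper factors as $(-\lambda)(-f)$ and applies \ref{def:lowerprev:homo} first, whereas you apply self-conjugacy first, but both are equally valid.
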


\begin{proposition}\label{prop:linearPifffouraxioms}
Consider a set of events $\B\subseteq\nonemptypower$ that is closed under finite unions and let $\mathcal{F}\subseteq\gambles$  be a linear space of gambles such that $\ind{B}f\in\mathcal{F}$ and $\ind{B}\in\mathcal{F}$ for every $f\in\mathcal{F}$ and $B\in\B$. Now let $\C\coloneqq\{(f,B)\colon f\in\mathcal{F},B\in\B\}$. 
Then a conditional prevision $\pr$ on $\C$ is a conditional linear prevision on $\C$ if and only if it is real-valued and satisfies~\emph{\ref{def:prev:bounded}--\ref{def:prev:GBR}}. 
\end{proposition}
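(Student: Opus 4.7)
The plan is to derive Proposition~\ref{prop:linearPifffouraxioms} directly from its coherent lower prevision analogue (Proposition~\ref{prop:cohlpifffouraxioms}), by exploiting the self-conjugacy built into the definition of a conditional prevision. Since a conditional linear prevision is, by Definition~\ref{def:linearprev}, just a coherent conditional prevision, and since the domain assumptions on $\C$ in the statement match those of Proposition~\ref{prop:cohlpifffouraxioms}, coherence is equivalent to real-valuedness plus LP1--LP4. So what remains is to show that, under self-conjugacy, the axiom pairs (LP1--LP4) and (P1--P4) are equivalent.

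For the ``only if'' direction, I would start from LP1--LP4 together with self-conjugacy and deduce P1--P4. P1 is identical to LP1. For P2, non-negative homogeneity is LP2, and the negative case follows by writing $\lambda=-\mu$ with $\mu>0$ and computing $\pr(\lambda f\vert B)=-\pr(\mu f\vert B)=-\mu\pr(f\vert B)=\lambda\pr(f\vert B)$, using self-conjugacy and LP2 (noting that $\lambda f,\mu f\in\mathcal{F}$ because $\mathcal{F}$ is a linear space). For P3, LP3 gives superadditivity; applying the same bound to $-f$ and $-g$ and using self-conjugacy yields the reverse inequality, so additivity holds. For P4, the generalised Bayes rule LP4 reads
\begin{equation*}
\pr(\ind{B}[f-\pr(f\vert A\cap B)]\vert A)=0,
\end{equation*}
and expanding the left-hand side by P2 and P3 (which I have just established) gives $\pr(\ind{B}f\vert A)-\pr(f\vert A\cap B)\pr(\ind{B}\vert A)=0$, i.e.\ P4. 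I need to check that all gambles appearing here lie in $\mathcal{F}$; this is guaranteed by the closure hypotheses that $\ind{B}f,\ind{B}\in\mathcal{F}$ for every $f\in\mathcal{F}$ and $B\in\B$, together with linearity of $\mathcal{F}$.

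For the ``if'' direction, I start from real-valuedness together with P1--P4 and self-conjugacy, and verify LP1--LP4. LP1 is P1; LP2 is a special case of P2; LP3 follows from P3 because additivity implies superadditivity. For LP4, I use P2, P3 and P4 to compute
\begin{equation*}
\pr(\ind{B}[f-\pr(f\vert A\cap B)]\vert A)=\pr(\ind{B}f\vert A)-\pr(f\vert A\cap B)\pr(\ind{B}\vert A)=0,
\end{equation*}
where the last equality is P4. Proposition~\ref{prop:cohlpifffouraxioms} then gives coherence of $\pr$, hence $\pr$ is a conditional linear prevision.

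There is no real obstacle here; the only thing to watch is a bookkeeping step, namely that every gamble appearing in the derivations (sums, scalar multiples, products with indicators) remains inside the domain $\mathcal{F}$, which is exactly what the structural assumptions on $\mathcal{F}$ and $\B$ in the statement ensure. The proof is essentially a routine translation between the coherent and linear axiom systems via self-conjugacy.
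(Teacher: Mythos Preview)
Your proposal is correct and follows essentially the same route as the paper: both directions reduce to Proposition~\ref{prop:cohlpifffouraxioms} by showing that, under self-conjugacy, the axiom sets \ref{def:lowerprev:bounded}--\ref{def:lowerprev:GBR} and \ref{def:prev:bounded}--\ref{def:prev:GBR} are equivalent. The only cosmetic difference is that for the ``only if'' direction the paper simply cites Proposition~\ref{prop:propertiesofP} (whose proof contains exactly the derivations you spell out), whereas you rederive \ref{def:prev:homo}--\ref{def:prev:GBR} from \ref{def:lowerprev:homo}--\ref{def:lowerprev:GBR} and self-conjugacy by hand.
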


\begin{corollary}\label{corol:fulllinearPifffouraxioms}
A conditional prevision $\pr$ on $\C(\states)$ is linear if and only if it is real-valued and satisfies~\emph{\ref{def:prev:bounded}--\ref{def:prev:GBR}}. 
\end{corollary}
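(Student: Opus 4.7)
The plan is to derive this corollary as an immediate special case of Proposition~\ref{prop:linearPifffouraxioms}, by choosing $\B$ and $\mathcal{F}$ so that the resulting domain in that proposition coincides with $\C(\states)$. Specifically, I would set $\B \coloneqq \nonemptypower$ and $\mathcal{F} \coloneqq \gambles$, and then verify that the hypotheses of Proposition~\ref{prop:linearPifffouraxioms} are trivially satisfied:

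First, $\nonemptypower$ is closed under finite unions, since the union of non-empty sets is non-empty. Second, $\gambles$---the set of bounded real-valued functions on $\states$---is clearly a linear space. Third, for every $f \in \gambles$ and $B \in \nonemptypower$, the indicator $\ind{B}$ is bounded and hence in $\gambles$, and $\ind{B}f$ is the product of two bounded functions and hence also in $\gambles$. With these verifications in hand, the domain $\{(f,B) \colon f \in \mathcal{F}, B \in \B\}$ from Proposition~\ref{prop:linearPifffouraxioms} reduces precisely to $\gambles \times \nonemptypower = \C(\states)$, so the corollary follows directly from the proposition.

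There is no real obstacle here; the work all lies in the proof of Proposition~\ref{prop:linearPifffouraxioms}, which presumably combines Proposition~\ref{prop:cohlpifffouraxioms} (characterising Williams-coherence for conditional lower previsions via \ref{def:lowerprev:bounded}--\ref{def:lowerprev:GBR}) with the self-conjugacy condition~\eqref{eq:self-conjugate} to upgrade the lower-prevision axioms to the stronger linear-prevision axioms \ref{def:prev:bounded}--\ref{def:prev:GBR}. The only thing worth checking is that the properties P1--P4 indeed correspond to LP1--LP4 once self-conjugacy is imposed: homogeneity~\ref{def:prev:homo} for all $\lambda \in \reals$ follows from non-negative homogeneity~\ref{def:lowerprev:homo} combined with $\pr(-f\vert B) = -\pr(f\vert B)$; additivity~\ref{def:prev:additive} follows from applying superadditivity~\ref{def:lowerprev:superadditive} to both $(f,g)$ and $(-f,-g)$; and Bayes' rule~\ref{def:prev:GBR} follows from the generalised Bayes rule~\ref{def:lowerprev:GBR} together with additivity and homogeneity. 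These upgrades, however, are already encoded in Proposition~\ref{prop:linearPifffouraxioms}, so no separate argument is needed here.
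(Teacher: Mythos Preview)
Your proposal is correct and matches the paper's approach exactly: the paper's proof is the one-liner ``Immediate consequence of Proposition~\ref{prop:linearPifffouraxioms},'' and your choice $\B=\nonemptypower$, $\mathcal{F}=\gambles$ is precisely the specialisation that makes that proposition collapse to the corollary. Your extra paragraph about how \ref{def:lowerprev:bounded}--\ref{def:lowerprev:GBR} upgrade to \ref{def:prev:bounded}--\ref{def:prev:GBR} under self-conjugacy is not needed here (that work indeed lives inside the proof of Proposition~\ref{prop:linearPifffouraxioms}), but it does no harm.
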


\begin{corollary}\label{corol:fulllinearPiffthreeaxioms}
Consider a linear space of gambles $\G\subseteq\G(\states)$ that includes the constant gamble $1$. A prevision $\pr$ on $\G$ is then linear if and only if it is real-valued and satisfies~\emph{\ref{def:prev:bounded}--\ref{def:prev:additive}}.
\end{corollary}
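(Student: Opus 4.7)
The plan is to derive this corollary as a special case of Proposition~\ref{prop:linearPifffouraxioms}. I will apply that proposition with $\B\coloneqq\{\states\}$ and $\mathcal{F}\coloneqq\G$, so that the resulting domain $\C=\{(f,B)\colon f\in\mathcal{F},B\in\B\}$ coincides, up to the shorthand $\pr(f)\coloneqq\pr(f\vert\states)$, with the given domain $\G$. I first have to check the structural hypotheses of that proposition: the set $\B=\{\states\}$ is trivially closed under finite unions, $\G$ is a linear space by assumption, and for every $f\in\G$ and $B\in\B$ (hence $B=\states$) we have $\ind{B}f=f\in\G$ and $\ind{B}=1\in\G$, where the latter uses the assumption that $\G$ contains the constant gamble $1$. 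So Proposition~\ref{prop:linearPifffouraxioms} applies, and it tells me that $\pr$ is a conditional linear prevision on $\C$ if and only if it is real-valued and satisfies~\ref{def:prev:bounded}--\ref{def:prev:GBR}.

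It now suffices to show that, under this choice of domain, axiom~\ref{def:prev:GBR} is automatically implied by~\ref{def:prev:bounded}--\ref{def:prev:additive}. The only instance of~\ref{def:prev:GBR} that can be formed within $\C$ is $A=B=\states$, in which case the axiom reads $\pr(\ind{\states}f\vert\states)=\pr(f\vert\states\cap\states)\,\pr(\states\vert\states)$, i.e.\ $\pr(f)=\pr(f)\,\pr(1)$. Hence it suffices to prove that $\pr(1)=1$. From~\ref{def:prev:bounded} applied to $f=1$ I get $\pr(1)\geq\inf_{x\in\states}1=1$, and from the self-conjugacy of $\pr$ (which is built into Definition~\ref{def:prev}) together with~\ref{def:prev:bounded} applied to $-1$, I obtain $\pr(1)=-\pr(-1)\leq-\inf_{x\in\states}(-1)=1$. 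Thus $\pr(1)=1$ and~\ref{def:prev:GBR} holds trivially on $\C$.

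Combining these observations yields one direction: if $\pr$ is real-valued and satisfies~\ref{def:prev:bounded}--\ref{def:prev:additive}, then it also satisfies~\ref{def:prev:GBR}, and Proposition~\ref{prop:linearPifffouraxioms} then makes it a linear prevision. Conversely, if $\pr$ is linear then real-valuedness is part of the definition and~\ref{def:prev:bounded}--\ref{def:prev:additive} follow from Proposition~\ref{prop:propertiesofP}. The only mildly delicate step is the derivation of $\pr(1)=1$ to dispose of~\ref{def:prev:GBR}; everything else is a straightforward specialisation of the already-established Proposition~\ref{prop:linearPifffouraxioms}.
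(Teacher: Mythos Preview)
Your proof is correct and follows essentially the same route as the paper: specialise Proposition~\ref{prop:linearPifffouraxioms} to $\B=\{\states\}$ and $\mathcal{F}=\G$, then show that~\ref{def:prev:GBR} reduces to $\pr(1)=1$, which is obtained from~\ref{def:prev:bounded} for $1$ and for $-1$. The only cosmetic difference is that the paper derives $\pr(1)=-\pr(-1)$ from~\ref{def:prev:homo} rather than from self-conjugacy, but both are immediately available and yield the same conclusion.
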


By comparing these properties with the ones in Section~\ref{sec:modellinguncertainty}, we see that the linearity of conditional linear previsions---the fact that they satisfy \ref{def:prev:homo} and~\ref{def:prev:additive}---is their most important property, in the sense that it distinguishes them from general coherent conditional lower previsions. Furthermore, this property is also what allows us to establish a connection with expectations. In particular, if we allow ourselves a small leap of faith here, then since expectations are well known to be linear, the fact that conditional linear previsions are also linear suggests that we can simply interpret them as conditional expectations.

In order to clarify why this is more than just intuition, it is instrumental to restrict the domain of $\pr$ to elements that are of the form $(\ind{A},B)$, where $\ind{A}$ is the indicator of an event $A$, and to then follow~\cite{DeFinetti:1970vq} in adopting the alternative notation $\pr(A\vert B)\coloneqq\pr(\ind{A}\vert B)$. As this notation already suggests, $\pr(A\vert B)$ can then be interpreted as the probability of $A$ conditional on $B$. This interpretation is furthermore mathematically sound, because the obtained objects $\pr(A\vert B)$ can be shown to satisfy all the essential properties of conditional probabilities, including finite---but not necessarily countable---additivity and, if $B\neq\emptyset$, Bayes's rule. Hence, by restricting the domain of a conditional linear prevision $\pr$ to elements of the form $(\ind{A},B)$, we obtain a conditional probability measure. The original unrestricted conditional linear prevision $\pr$ is then the conditional expectation operator that corresponds to this conditional probability measure. Here too, this connection is not merely intuitive, but can be made mathematically rigorous. A detailed account of the mathematics behind these connections, however, is beyond the scope of this contribution. For more information about (finitely additive) conditional probability measures, the interested reader is referred to the work of \cite{Dubins:1975}. 

For our present purposes, it suffices to know that conditional linear previsions can indeed be interpreted as conditional expectation operators and that conditional probabilities are conditional linear previsions whose domain contains only---or is restricted to---elements of the form $(\ind{A},B)$. A similar observation applies to unconditional expectation operators and probability measures, with the role of the conditional linear prevision now taken up by an unconditional one.

Since conditional linear previsions are themselves a special (self-conjugate) case of coherent conditional lower previsions, we conclude that conditional expectations and conditional probability measures can both be regarded as special cases of conditional coherent lower previsions. Similarly, unconditional expectations and probability measures are special cases of coherent lower previsions. However, the connection goes much further, because conditional linear previsions are not just a special case of coherent conditional lower previsions: they can also be used to characterise them.

\begin{proposition}\label{prop:lowerenvelope}
A conditional lower prevision $\lp$ on $\C\subseteq\C(\states)$ is coherent if and only if there is a non-empty set $\mathbb{P}^*$ of conditional linear previsions on $\C(\states)$ such that
\begin{equation}\label{eq:prop:lowerenvelope}
\lp(f\vert B)=\inf\{P(f\vert B)\colon P\in\mathbb{P}^*\}
~~\text{for all $(f,B)\in\C$.}
\end{equation}
The same is true if the infimum in this expression is replaced by a minimum.
\end{proposition}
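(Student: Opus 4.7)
The result has two directions, which I would prove separately. For the sufficiency direction, assume $\lp(f\vert B)=\inf\{P(f\vert B)\colon P\in\mathbb{P}^*\}$ for all $(f,B)\in\C$, with each $P\in\mathbb{P}^*$ a conditional linear prevision on $\C(\states)$. Each such $P$ is coherent and therefore satisfies the Williams-coherence inequality~\eqref{eq:prop:equivalentToPelessoniAndVicig} by Proposition~\ref{prop:equivalentToPelessoniAndVicig}. To transfer this inequality to $\lp$, I would fix data $\lambda_0,\dots,\lambda_n\in\reals_{\geq0}$ and $(f_i,B_i)\in\C$, and for every $\epsilon>0$ choose $P_\epsilon\in\mathbb{P}^*$ with $P_\epsilon(f_0\vert B_0)<\lp(f_0\vert B_0)+\epsilon$. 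Using $\lp(f_i\vert B_i)\leq P_\epsilon(f_i\vert B_i)$ for $i\geq1$, the integrand for $\lp$ dominates the one for $P_\epsilon$ pointwise, except in the $i=0$ term where it is off by at most $\lambda_0\epsilon$. Hence the corresponding supremum is bounded below by $-\lambda_0\epsilon$; letting $\epsilon\to0$ gives~\eqref{eq:prop:equivalentToPelessoniAndVicig} for $\lp$. Real-valuedness is immediate from $\inf_{x\in B}f(x)\leq\lp(f\vert B)\leq\sup_{x\in B}f(x)$, inherited from each $P$.

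For the necessity direction, let $\lp$ be coherent on $\C$ and extend it to its natural extension $\natexLP$ on $\C(\states)$ via Proposition~\ref{prop:naturalextension:full}. The plan is to construct, for each $(f_0,B_0)\in\C(\states)$, a conditional linear prevision $P_{f_0,B_0}$ on $\C(\states)$ dominating $\natexLP$ pointwise and satisfying $P_{f_0,B_0}(f_0\vert B_0)=\natexLP(f_0\vert B_0)$. Then $\mathbb{P}^*\coloneqq\{P_{f_0,B_0}\colon(f_0,B_0)\in\C(\states)\}$ realises $\lp$ as a pointwise \emph{minimum} on $\C$, delivering both the infimum and minimum claims at once.

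The construction is carried out at the level of sets of desirable gambles. By Proposition~\ref{prop:smallestSDGfromLP}, $\natexLP=\lp_{\E(\lp)}$. Let $h_0\coloneqq[\natexLP(f_0\vert B_0)-f_0]\ind{B_0}$. If $h_0\in\E(\lp)$, then $\natexLP$ is already self-conjugate at $(f_0,B_0)$, and any conditional linear prevision dominating $\natexLP$ automatically attains the value $\natexLP(f_0\vert B_0)$ there; this case reduces to the generic one. Otherwise, the family of coherent $\desir^*\supseteq\E(\lp)$ with $h_0\notin\desir^*$ is nonempty (it contains $\E(\lp)$) and closed under unions of chains (axioms~\ref{def:SDG:partialgain}--\ref{def:SDG:partialloss} are preserved, and $h_0$ remains excluded). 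Zorn's lemma then supplies a maximal such $\desir^*$. By construction, $\lp_{\desir^*}$ is a coherent conditional lower prevision on $\C(\states)$ in the sense of Definition~\ref{def:cohlp}, it dominates $\natexLP$ there, and $\lp_{\desir^*}(f_0\vert B_0)=\natexLP(f_0\vert B_0)$.

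The main obstacle is establishing the self-conjugacy of $\lp_{\desir^*}$, which together with its coherence turns it into a conditional linear prevision in the sense of Definition~\ref{def:linearprev}. I would argue by contradiction: suppose $\lp_{\desir^*}(g\vert A)<\mu<-\lp_{\desir^*}(-g\vert A)$ for some $(g,A)\in\C(\states)$ and $\mu\in\reals$. Then both $[g-\mu]\ind{A}$ and $[\mu-g]\ind{A}$ are outside $\desir^*$, so by maximality each of $\E(\desir^*\cup\{[g-\mu]\ind{A}\})$ and $\E(\desir^*\cup\{[\mu-g]\ind{A}\})$ is either incoherent or contains $h_0$. A short conic-combination argument based on~\ref{def:SDG:partialgain}--\ref{def:SDG:convex}, together with Equations~\eqref{eq:posi}--\eqref{eq:natextop}, rules out incoherence: it would force one of $[g-\mu]\ind{A}$ or $[\mu-g]\ind{A}$ back into $\desir^*$, contradicting the choice of $\mu$. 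The remaining alternative yields representations $h_0=g_1+[g-\mu]\ind{A}=g_2+[\mu-g]\ind{A}$ with $g_1,g_2\in\desir^*\cup\{0\}$; the degenerate subcase $g_1=g_2=0$ forces $g=\mu$ on $A$, which contradicts $\lp_{\desir^*}(g\vert A)<\mu$ directly. Averaging the two representations and invoking~\ref{def:SDG:homo} and~\ref{def:SDG:convex} gives $h_0=(g_1+g_2)/2\in\desir^*$, contradicting $h_0\notin\desir^*$. Self-conjugacy of $\lp_{\desir^*}$ follows, and setting $P_{f_0,B_0}\coloneqq\lp_{\desir^*}$ completes the construction.
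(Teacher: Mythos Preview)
Your sufficiency argument mirrors the paper's Proposition~\ref{prop:lowerenvelope:if} essentially verbatim. Your necessity argument, however, is genuinely different from the paper's. The paper proves the `only if' direction by passing to the natural extension $\natexLP$ (Proposition~\ref{prop:naturalextension:full}) and then invoking Proposition~\ref{prop:lowerenvelope:onlyif:full}, whose proof simply cites Williams' Theorem~2 for the existence of dominating conditional linear previsions attaining each value. You instead reconstruct that existence result from scratch inside the desirability framework: Zorn's lemma yields a maximal coherent $\desir^*\supseteq\E(\lp)$ avoiding $h_0$, and you then argue that maximality forces $\lp_{\desir^*}$ to be self-conjugate. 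This is a more self-contained route that stays within the paper's own machinery (Lemma~\ref{lemma:extendD}, axioms~\ref{def:SDG:partialgain}--\ref{def:SDG:partialloss}) rather than importing Williams' result as a black box; the paper's route is shorter but less transparent about where the linear previsions actually come from.

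Two small repairs are needed in your self-conjugacy step. First, membership in $\E(\desir^*\cup\{[g-\mu]\ind{A}\})$ only gives $h_0=g_1+\alpha[g-\mu]\ind{A}$ with some $\alpha>0$ and $g_1\in\desir^*\cup\{0\}$, not necessarily $\alpha=1$; similarly $h_0=g_2+\beta[\mu-g]\ind{A}$ with $\beta>0$. The fix is to take the weighted combination $\beta$ times the first plus $\alpha$ times the second, yielding $(\alpha+\beta)h_0=\beta g_1+\alpha g_2$, after which your case analysis goes through. Second, your treatment of the case $h_0\in\E(\lp)$ (``reduces to the generic one'') is circular as written, since at that point you have not yet produced any dominating conditional linear prevision. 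The clean fix is to run your Zorn construction once with $h_0=0$ (equivalently, $(f_0,B_0)=(0,\states)$), which always satisfies $h_0\notin\E(\lp)$; this yields at least one $P\in\mathbb{P}^*$, and whenever $\natexLP$ is self-conjugate at some other $(f_0,B_0)$ that $P$ automatically attains the value there.
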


This well-known result is essentially due to~\cite{williams1975,Williams:2007eu}\iftoggle{arxiv}{; our proof is a minor variation of his}{}. The result is fundamental, because it provides coherent conditional lower previsions with a second, alternative interpretation. Indeed, because of Proposition~\ref{prop:lowerenvelope}, a conditional lower prevision is not only a supreming buying price; alternatively, it can also be regarded as an infimum of conditional previsions. Since---as we have just seen---conditional previsions can themselves be interpreted as conditional expectations, this implies that coherent conditional lower previsions can be interpreted as lower envelopes of expectations, often referred to as lower expectations.

This interpretation can also be used to develop an alternative characterisation for the natural extension $\natexLP$ of a coherent conditional lower prevision $\lp$ on $\C\subseteq\C(\states)$.
In order to do that, we let $\mathbb{P}$ be the set of all conditional linear previsions on $\C(\states)$ and then let
\begin{equation}\label{eq:setofdominatingcondlinprev}
\mathbb{P}_{\lp}\coloneqq\{P\in\mathbb{P}\colon P(f\vert B)\geq\lp(f\vert B)\text{ for all }(f,B)\in\C\}
\end{equation}
be the subset that dominates $\lp$. The natural extension $\natexLP$ is then the lower envelope of $\mathbb{P}_{\lp}$ and, similarly, $\natexUP$ is its upper envelope.

\begin{proposition}\label{prop:lowerenvelope:onlyif:natex}
Let $\lp$ be a coherent conditional lower prevision $\lp$ on $\C\subseteq\C(\states)$, let $\natexLP$ be its natural extension to $\C(\states)$ and let $\natexUP$ be the corresponding conditional upper prevision on $\C(\states)$. Then $\mathbb{P}_{\lp}\neq\emptyset$ and, for all $(f,B)\in\C(\states)$:
\begin{equation}\label{eq:prop:lowerenvelope:onlyif:natex}
\natexLP(f\vert B)=\min\{P(f\vert B)\colon P\in\mathbb{P}_{\lp}\}
\text{~~and~~}
\natexUP(f\vert B)=\max\{P(f\vert B)\colon P\in\mathbb{P}_{\lp}\}.
\end{equation}
Also, for any $(f,B)\in\C(\states)$ and $\alpha\in[\natexLP(f\vert B),\natexUP(f\vert B)]$, there is some $P\in\mathbb{P}_{\lp}$ such that $P(f\vert B)=\alpha$.
\end{proposition}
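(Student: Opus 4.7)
The plan is to invoke Proposition~\ref{prop:lowerenvelope} (in its minimum form) twice: once for $\natexLP$ itself, and once for a carefully augmented version of $\lp$ designed to fix the value at $(f,B)$ to any prescribed $\alpha$ in the target interval.

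First I would establish non-emptiness and the envelope identities. Since $\natexLP$ is coherent on $\C(\states)$ by Proposition~\ref{prop:naturalextension:full}, Proposition~\ref{prop:lowerenvelope} yields a non-empty set $\mathbb{P}^*$ of conditional linear previsions on $\C(\states)$ with $\natexLP(g\vert C)=\min\{P(g\vert C)\colon P\in\mathbb{P}^*\}$ throughout $\C(\states)$. Each such $P$ dominates $\natexLP$, and therefore $\lp$, on $\C$, so $\mathbb{P}^*\subseteq\mathbb{P}_{\lp}$ and in particular $\mathbb{P}_{\lp}\neq\emptyset$. Conversely, any $P\in\mathbb{P}_{\lp}$ satisfies $\A_{\lp}\subseteq\A_{P}$ and hence $\E(\A_{\lp})\subseteq\E(\A_{P})$; combining Equation~\eqref{eq:LPfromD} with Proposition~\ref{prop:smallestSDGfromLP} then gives $P(g\vert C)\geq\natexLP(g\vert C)$ for every $(g,C)\in\C(\states)$. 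The lower envelope formula follows, and the minimum is attained by the witness supplied by $\mathbb{P}^*\subseteq\mathbb{P}_{\lp}$. The upper formula for $\natexUP$ is then immediate by self-conjugacy: $\natexUP(f\vert B)=-\natexLP(-f\vert B)=-\min_{P\in\mathbb{P}_{\lp}}P(-f\vert B)=\max_{P\in\mathbb{P}_{\lp}}P(f\vert B)$.

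For the density assertion, fix $\alpha\in[\natexLP(f\vert B),\natexUP(f\vert B)]$ and extend $\lp$ to $\lp''$ on $\C\cup\{(f,B)\}$ by $\lp''(f\vert B)\coloneqq\alpha$. Once coherence of $\lp''$ is established, the envelope formula just proved---applied now to $\lp''$ in place of $\lp$, with Proposition~\ref{prop:naturalextension:full} ensuring that $\natexLP''(f\vert B)=\lp''(f\vert B)=\alpha$---delivers some $P^*\in\mathbb{P}_{\lp''}$ with $P^*(f\vert B)=\alpha$; since $P^*\geq\lp''=\lp$ on $\C$, this $P^*$ lies in $\mathbb{P}_{\lp}$ and provides the required conditional linear prevision.

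The crux is therefore the coherence of $\lp''$, and this is where the main effort goes. I would argue it via Definition~\ref{def:cohlp} by exhibiting the coherent set of desirable gambles $\desir''\coloneqq\E(\A_{\lp}\cup\{[f-\mu]\ind{B}\colon\mu<\alpha\})$ and verifying that $\lp_{\desir''}$ coincides with $\lp''$ on $\C\cup\{(f,B)\}$. Axioms~\ref{def:SDG:partialgain}--\ref{def:SDG:convex} are automatic from the construction of $\E$. For~\ref{def:SDG:partialloss}, suppose for contradiction that some non-zero $g_0\leq0$ lies in $\desir''$; unpacking the definition of $\posi$ writes $g_0=a+M[f-\alpha']\ind{B}+h$ with $a\in\E(\lp)\cup\{0\}$, $h\in\gamblesnonneg$, $M\geq0$, and $\alpha'<\alpha$ whenever $M>0$. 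The case $M=0$ places a non-zero $\leq0$ gamble inside $\E(\lp)$, contradicting~\ref{def:SDG:partialloss} for $\E(\lp)$. For $M>0$, rearrangement combined with~\ref{def:SDG:partialgain} and~\ref{def:SDG:convex} applied to $\E(\lp)$ forces $(\alpha'-f)\ind{B}=[-f-(-\alpha')]\ind{B}\in\E(\lp)$, which via Equation~\eqref{eq:LPfromD} yields $\natexLP(-f\vert B)\geq-\alpha'$ and hence $\natexUP(f\vert B)\leq\alpha'<\alpha$, contradicting $\alpha\leq\natexUP(f\vert B)$. A short separate check then confirms that $\lp_{\desir''}$ coincides with $\lp$ on $\C$ and equals $\alpha$ at $(f,B)$---the first by the natural-extension bound on $\E(\lp)$, the second by essentially repeating the argument above with $\alpha$ replaced by any strictly larger candidate supremum price---closing the argument.
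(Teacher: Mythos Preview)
Your treatment of the envelope identities and non-emptiness is correct and close in spirit to the paper's, though you reach $P\geq\natexLP$ for $P\in\mathbb{P}_{\lp}$ via the clean desirability inclusion $\E(\A_{\lp})\subseteq\E(\A_P)$, whereas the paper takes the slightly more indirect route of defining the lower envelope of $\mathbb{P}_{\lp}$, checking its coherence, and invoking minimality of the natural extension.

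For the density assertion, your route is genuinely different from the paper's---the paper simply applies Proposition~\ref{prop:lowerenvelope:onlyif:full} to $\natexLP$, which already carries the density statement (inherited from Williams' Theorem~2), and then observes $\mathbb{P}^*\subseteq\mathbb{P}_{\lp}$. Your constructive approach via $\desir''$ is a nice idea, but it has a real gap: the assertion that $\lp_{\desir''}$ coincides with $\lp$ on $\C$---and hence that $\lp''$ is coherent---is false in general. Take $\states=\{a,b\}$, $\C=\{(\ind_a,\states)\}$, $\lp(\ind_a)=0.3$, and $(f,B)=(2\ind_a,\states)$ with $\alpha=1.5\in[\natexLP(2\ind_a),\natexUP(2\ind_a)]=[0.6,2]$. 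Since $2\ind_a-1.4\in\desir''$, scaling gives $\ind_a-0.7\in\desir''$, so $\lp_{\desir''}(\ind_a)\geq 0.7>0.3$. In fact $\lp''$ is not coherent at all here, because any coherent lower prevision must satisfy $\lp(2\ind_a)=2\lp(\ind_a)$ by~\ref{def:lowerprev:homo}, yet $1.5\neq 0.6$.

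The repair is easy and preserves your construction: drop the claim about $\lp''$ and work directly with $\lp_{\desir''}$. You have already shown that $\desir''$ is coherent, that $\lp_{\desir''}\geq\lp$ on $\C$ (immediate from $\A_{\lp}\subseteq\desir''$), and that $\lp_{\desir''}(f\vert B)=\alpha$. Applying the envelope formula you proved in the first part to the coherent conditional lower prevision $\lp_{\desir''}$ on $\C(\states)$ yields some $P^*$ with $P^*(f\vert B)=\lp_{\desir''}(f\vert B)=\alpha$ and $P^*\geq\lp_{\desir''}\geq\lp$ on $\C$, hence $P^*\in\mathbb{P}_{\lp}$. A secondary remark: the upper bound $\lp_{\desir''}(f\vert B)\leq\alpha$ is not literally ``the argument above with $\alpha$ replaced''---it requires a short case split on whether the coefficient $M$ of $[f-\alpha']\ind_B$ in the posi decomposition is below, equal to, or above $1$---but each case does close against one of the bounds $\alpha\geq\natexLP(f\vert B)$ or $\alpha\leq\natexUP(f\vert B)$.
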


The final connection that remains to be discussed is that between conditional lower previsions and conditional lower probabilities. However, since conditional lower previsions are lower envelopes of conditional linear previsions, and since conditional probability measures are conditional linear previsions whose domain is restricted to elements of the form $(\ind{A},B)$, this connection is immediate: conditional lower probabilities are simply conditional lower previsions whose domain is restricted to elements of the form $(\ind{A},B)$. In that case, in order to emphasize this, we adopt $\lp(A\vert B)$ as an intuitive alternative notation for $\lp(\ind{A}\vert B)$.

\section{Special cases of the independent natural extension}\label{sec:specialcases}

Now that we have established that expectations, lower expectations, probabilities and lower probabilities are indeed all special cases of lower previsions, we can come back to our claim at the end of Section~\ref{sec:modellinguncertainty}, which was that our results can be applied to---and interpreted in terms of---these special cases as well. 

Applying our results to the case of lower expectations is straightforward. Mathematically, nothing changes. The only difference is that the local conditional lower previsions that we start from are now interpreted---or defined---as lower bounds on expectations, and similarly for the independent natural extension that is derived from them. For lower probabilities, it suffices to restrict the domain of the local models $\lp_1$ and $\lp_2$ to elements of the form $(\ind{A_1},B_1)$ and $(\ind{A_2},B_2)$, respectively, and to similarly restrict the domain $\C$ of $\lp_1\otimes\lp_2$ to elements of the form $(\ind{A},B)$. Other than that, here too, the only difference is the interpretation. For results that are tailored to this specific case, we refer the interested reader to the work of~\cite{Vicig:2000vh}, who focused on the independent natural extension for lower probabilities, but within a more general context that allows for more than two variables. As explained before, our results are basically a generalisation of his, extending them from lower probabilities to conditional lower previsions.

Once we interpret the independent natural extension as a lower expectation or a lower probability, it makes sense to consider the set of conditional linear previsions $\mathbb{P}_{\lp_1\otimes\lp_2}$ that dominates the independent natural extension $\lp_1\otimes\lp_2$, and to then interpret the latter in terms of the elements of the former. An essential observation here is that the elements of $\mathbb{P}_{\lp_1\otimes\lp_2}$ need not be independent themselves, nor is this the case for its extreme points. Consequently, the independent natural extension is not in general a lower envelope of precise independent models~\cite[Section~9.3.4]{Walley:1991vk}. For example, for any $f_1\in\gambleson{1}$ and $B_2\in\B_2$, since $\lp_1\otimes\lp_2$ is an independent product of $\lp_1$ and $\lp_2$, it follows from Definitions~\ref{def:independentproduct:LP} and~\ref{def:epistemicindependence:LP} that
\begin{equation*}
(\lp_1\otimes\lp_2)(f_1\vert B_2)
=(\lp_1\otimes\lp_2)(f_1)
=\lp_1(f_1).
\vspace{3pt}
\end{equation*}
However, for $P\in\mathbb{P}_{\lp_1\otimes\lp_2}$, this does not necessarily imply that $P(f_1\vert B_2)=P(f_1)$. Instead, the only constraint that is imposed on $P(f_1\vert B_2)$ and $P(f_1)$ is that they both belong to $[\lp_1(f_1),\overline{P}_1(f_1)]$:
\begin{equation*}
\lp_1(f_1)\leq P(f_1\vert B_2)\leq\overline{P}_1(f_1)
\text{ and }
\lp_1(f_1)\leq P(f_1)\leq\overline{P}_1(f_1).
\end{equation*}
This feature is an essential aspect of epistemic independence: it imposes independence on the uncertainty model itself. If this uncertainty model is a set of conditional expectations, then epistemic independence imposes constraints on this set---in this case, on the resulting lower and upper expectations---but not on the individual expectations themselves. Similarly, for lower probabilities, epistemic independence does not require $P(A_1\vert B_2)$ and $P(A_1)$ to be equal, but only requires that
\begin{equation}\label{eq:maybecorrelated}
\lp_1(A_1)\leq P(A_1\vert B_2)\leq\overline{P}_1(A_1)
\text{ and }
\lp_1(A_1)\leq P(A_1)\leq\overline{P}_1(A_1).
\end{equation}
In this sense, epistemic independence requires that our knowledge about $P(A_1\vert B_2)$ and $P(A_1)$ is identical: conditioning on $B_2$ should have no effect on our bounds for $P(A_1)$. This also explains the prefix epistemic: epistemic independence imposes a constraint on our knowledge. In other words, and at the risk of oversimplifying it, one could say that an assessment of epistemic independence does not entail a belief of independence---because, for a given $P\in\mathbb{P}_{\lp_1\otimes\lp_2}$, $X_1$ and $X_2$ may very well be correlated---but rather an independence of beliefs.

Nevertheless, the standard notion of independence does correspond to a special case of epistemic independence: it suffices to use---restrictions of---two linear previsions $P_1$ on $\gambleson{1}$ and $P_2$ on $\gambleson{2}$ as our local models. Indeed, in that case, since the linearity of $P_1$ implies that $\lp_1(A_1)=\overline{P}_1(A_1)=P_1(A_1)$, Equation~\eqref{eq:maybecorrelated} implies that for all $P\in\mathcal{P}_{P_1\otimes P_2}$:
\begin{equation*}
P(A_1\vert B_2)=P(A_1)
\text{ for all $A_1\in\mathcal{P}(\states_1)$ and $B_2\in\B_2$,}
\end{equation*}
 and similarly if the indexes $1$ and $2$ are reversed. Furthermore, it then follows from Corollary~\ref{corol:fact-measurable} and Property~\ref{def:prev:bounded} that $P(A_1\cap B_2)=P(A_1)P(B_2)$, which is the conventional and well known defining factorisation property of independence.

We end by taking an even closer look at this specific case. So consider a linear prevision $P_1$ on $\gambleson{1}$ and a linear prevision $P_2$ on $\gambleson{2}$ or---in case we want to consider probability measures instead of expectations---their restrictions to a suitable set of indicators. Using the notation that we have adopted so far, the independent natural extension of these local models is then denoted by $P_1\otimes P_2$. In this case however, this notation is a bit unfortunate, because it suggests that $P_1\otimes P_2$ is a (conditional) linear prevision itself, which, as we will see, may not be the case. Therefore, we will adopt $\pr_1\underline{\otimes}\pr_2$ as an alternative---less suggestive---notation for $\pr_1\otimes\pr_2$ and will then use $\pr_1\overline{\otimes}\pr_2$ to denote its corresponding conditional upper prevision, defined by
\begin{equation*}
(\pr_1\overline{\otimes}\pr_2)(f\vert B)\coloneqq-(\pr_1\otimes\pr_2)(-f\vert B)
\text{ for all $(f,B)\in\C(\states_1\times\states_2)$.}
\end{equation*}

For atom-independence, the difference between $(\pr_1\overline{\otimes}\pr_2)(f\vert B)$ and $(\pr_1\underline{\otimes}\pr_2)(f\vert B)$ can be surprisingly large. For example, if the local models assign probability zero to all the singletons, then as already pointed out in the work of~\cite{Miranda2015460}, any joint linear prevision that  marginalises to these local models will dominate the independent natural extension.

\begin{proposition}\label{prop:dominatedbysingletonzero}
Consider a linear prevision $\pr_1$ on $\gambleson{1}$ and a linear prevision $\pr_2$ on $\gambleson{2}$ such that $\pr_1(\ind{x_1})=0$ and $\pr_2(\ind{x_2})=0$ for all $x_1\in\states_1$ and $x_2\in\states_2$. 
Let $P_{12}$ be a linear prevision on $\G(\states_1\times\states_2)$ that has $\pr_1$ and $\pr_2$ as its marginals. For atom-independence, we then have that for all $f\in\G(\states_1\times\states_2)$:
\begin{equation*}
(\pr_1\underline{\otimes}\pr_2)(f)\leq P_{12}(f)\leq(\pr_1\overline{\otimes}\pr_2)(f).
\end{equation*}
\end{proposition}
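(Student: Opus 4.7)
The plan is to leverage the explicit construction of the independent natural extension through sets of desirable gambles and reduce the required inequality to a non-negativity check on generators. By Theorem~\ref{theo:natext:LP} and Equation~\eqref{eq:indnatex:LP}, we have $(\pr_1\underline{\otimes}\pr_2)(f) = \lp_{\desir}(f) = \sup\{\mu \in \reals : f - \mu \in \desir\}$, where $\desir = \E(\A_{1 \to 2} \cup \A_{2 \to 1})$ and the generating families $\A_{1 \to 2}$, $\A_{2 \to 1}$ are those of Equations~\eqref{eq:A12s}--\eqref{eq:A21s} with $\desir_i = \E(\pr_i)$ and, since we are under atom-independence, $\B_i = \states_i$. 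It therefore suffices to prove that $P_{12}(g) \geq 0$ for every $g \in \desir$: once this is in hand, $f - \mu \in \desir$ forces $P_{12}(f) \geq \mu$, and passing to the supremum yields $(\pr_1\underline{\otimes}\pr_2)(f) \leq P_{12}(f)$.

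Because $\desir = \posi\bigl(\A_{1 \to 2} \cup \A_{2 \to 1} \cup \mathcal{G}_{>0}(\states_1 \times \states_2)\bigr)$ by Equations~\eqref{eq:posi}--\eqref{eq:natextop}, and because $P_{12}$ is linear, it is enough to check non-negativity of $P_{12}$ on each of the three generating families. For $g \in \mathcal{G}_{>0}(\states_1 \times \states_2)$ this is immediate from~\ref{def:prev:bounded}. For $g = f_2(X_2)\ind{B_1}(X_1) \in \A_{1 \to 2}$ with $f_2 \in \E(\pr_2)$, there are two sub-cases. If $B_1 = \states_1$, then $P_{12}(g) = \pr_2(f_2)$ by the marginal property, and unpacking $\E(\pr_2) = \posi(\A_{\pr_2} \cup \mathcal{G}_{>0}(\states_2))$ together with~\eqref{eq:AfromLP} shows that $\pr_2$ is strictly positive on $\A_{\pr_2}$ (by construction) and non-negative on $\mathcal{G}_{>0}(\states_2)$ by~\ref{def:prev:bounded}, hence non-negative on all of $\E(\pr_2)$ by linearity. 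If instead $B_1 = \{x_1\}$, then $\abs{g} \leq (\sup\abs{f_2})\,\ind{\{x_1\}}(X_1)$, so monotonicity~\ref{def:prev:monotonicity} combined with $P_{12}(\ind{\{x_1\}}(X_1)) = \pr_1(\{x_1\}) = 0$ forces $P_{12}(g) = 0$. The analysis of $\A_{2 \to 1}$ is symmetric, exploiting $\pr_2(\{x_2\}) = 0$.

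The upper bound then follows at once by applying the lower bound to $-f$ and using that $P_{12}$ is self-conjugate: $P_{12}(f) = -P_{12}(-f) \leq -(\pr_1\underline{\otimes}\pr_2)(-f) = (\pr_1\overline{\otimes}\pr_2)(f)$. The main step is really the bookkeeping around $\E(\pr_2)$ (and $\E(\pr_1)$) together with the pinpointing of where the singleton-probability-zero hypothesis enters: this assumption is exactly what neutralises the ``$B_1 = \{x_1\}$'' and ``$B_2 = \{x_2\}$'' generators---precisely those new to atom-independence beyond the cylindrical ones---because they are supported on $P_{12}$-null slices of $\states_1 \times \states_2$. Once this is observed, the proof collapses to a short monotonicity-and-linearity argument.
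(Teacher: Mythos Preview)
Your proof is correct and follows essentially the same approach as the paper: both unpack $\desir=\E(\pr_1)\otimes\E(\pr_2)$ into its generators, verify $P_{12}\geq0$ on each generator family by splitting into the cylindrical case $B_i=\states_i$ (marginalisation plus non-negativity on $\E(\pr_j)$) and the singleton case $B_i=\{x_i\}$ (where the zero-mass hypothesis kills the term), and then obtain the upper bound by applying the lower bound to $-f$. The only cosmetic differences are that the paper runs an explicit $\epsilon$-argument on $f-\mu$ rather than your cleaner ``$P_{12}\geq0$ on $\desir$ implies $P_{12}\geq\lp_\desir$'' formulation, and that the paper invokes Lemma~\ref{lemma:nonnegativeELP} for the $\pr_j(f_j)\geq0$ step whereas you unpack $\E(\pr_j)$ directly.
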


We now use this result to obtain the following example. It demonstrates that $(\pr_1\overline{\otimes}\pr_2)(f\vert B)$ and $(\pr_1\underline{\otimes}\pr_2)(f\vert B)$ may be---substantially---different, and therefore, that $\mathbb{P}_{P_1\otimes P_2}$ may have more than one element. Furthermore, and perhaps most importantly, it proves the claim that we made at the end of Example~\ref{ex:oddandeven:factorisation}.

\begin{example}\label{ex:oddandeven:lower}
Let $\states_1=\states_2=\nats$, and let $g=\ind{\mathrm{odd}}\in\mathcal{G}_{\geq0}(\states_1)$ and $h=\ind{\mathrm{even}}\in\mathcal{G}_{\geq0}(\states_2)$, with $\ind{\mathrm{odd}}$ and $\ind{\mathrm{even}}$ defined as in Example~\ref{ex:oddandeven}. Consider now two linear previsions $\pr_{\mathrm{odd}}$ and $\pr_{\mathrm{even}}$ on $\G(\nats)$ such that $\pr_{\mathrm{odd}}(\ind{n})=0$ and $\pr_{\mathrm{even}}(\ind{n})=0$ for all $n\in\nats$ and such that $\pr_{\mathrm{odd}}(\ind{\mathrm{odd}})=1=\pr_{\mathrm{even}}(\ind{\mathrm{even}})$, and let $P\coloneqq\nicefrac{1}{2}(P_{\mathrm{odd}}+P_{\mathrm{even}})$. It then follows from Corollary~\ref{corol:fulllinearPiffthreeaxioms} that $P$ is a linear prevision on $\G(\nats)$. Furthermore, we find that $P(\ind{n})=\nicefrac{1}{2}(P_{\mathrm{odd}}(\ind{n})+P_{\mathrm{even}}(\ind{n}))=0$ for all $n\in\nats$ and, in the same way, that $P(\ind{\mathrm{odd}})=P(\ind{\mathrm{even}})=\nicefrac{1}{2}$. 

Now let $P_1=P_2=P$. Then as demonstrated in~\cite[Example 5]{Miranda2015460}, it is possible to construct a linear prevision $\pr_{12}$ on $\G(\states_1\times\states_2)$ that has $P_1$ and $P_2$ as its marginals and for which $\pr_{12}(X_1\text{ odd and }X_2\text{ even})=\pr_{12}(gh)=0$. Therefore, for atom-independence, Proposition~\ref{prop:dominatedbysingletonzero} implies that $(\pr_1\underline{\otimes}\pr_2)(gh)\leq0$. Since the converse inequality follows from Property~\ref{def:prev:bounded} and the non-negativity of $fg$, this implies that for atom-independence, $(\pr_1\underline{\otimes}\pr_2)(gh)=0$. Since $\pr_1(g)\pr_2(h)=\pr(\ind{\mathrm{odd}})\pr(\ind{\mathrm{even}})=\nicefrac{1}{2}\cdot\nicefrac{1}{2}=\nicefrac{1}{4}$, this shows that atom-independence does not lead to factorisation here, and proves our claim from Example~\ref{ex:oddandeven:factorisation}.

For event-independence however, since linear previsions are a special case of coherent lower previsions, we already know from Example~\ref{ex:oddandeven:factorisation} that we do have factorisation here, and therefore, that $(\pr_1\underline{\otimes}\pr_2)(gh)=\nicefrac{1}{4}$.

Finally, we note that it follows from Property~\ref{def:lowerprev:lowerbelowupper} that, regardless of whether we adopt epistemic subset- or atom-indepence, $(\pr_1\underline{\otimes}\pr_2)(gh)\leq(\pr_1\overline{\otimes}\pr_2)(gh)$. Therefore, and because we know from Section~\ref{sec:choiceofevents} that epistemic event-independence leads to more informative joint models than epistemic atom-independence, our results in this example imply that for epistemic atom-independence: 
\begin{equation*}
(\pr_1\overline{\otimes}\pr_2)(gh)\geq\nicefrac{1}{4}>0=(\pr_1\underline{\otimes}\pr_2)(gh).
\vspace{-8pt}
\end{equation*}
\hfill$\lozenge$
\end{example}

As explained in Section~\ref{sec:factadd}, the failure of factorisation that we observe in this example is a first important reason why we prefer epistemic event-independence over epistemic atom-independence. The second reason is that epistemic atom-independence leads to an independent natural extension that may be too uninformative, in the sense that $P_1\underline{\otimes}P_2$ can be excessively small. This too was illustrated in the example above: for the same local models, epistemic event-independence gave rise to substantially higher joint lower previsions.

That said, even for event-independence, $(\pr_1\overline{\otimes}\pr_2)(f\vert B)$ and $(\pr_1\underline{\otimes}\pr_2)(f\vert B)$ may still be different. In that case, however, the reason is more subtle, and can be partially attributed to the fact that for infinite spaces and finitely additive probability measures, Fubini's theorem may not hold. We demonstrate this in our final example, which relies heavily on the following proposition.

\begin{proposition}\label{prop:dominatedbynested}
Consider a linear prevision $\pr_1$ on $\gambleson{1}$ and a linear prevision $\pr_2$ on $\gambleson{2}$. Then for all $f\in\G(\states_1\times\states_2)$, we have that
\begin{equation*}
(\pr_1\underline{\otimes}\pr_2)(f)\leq\pr_1(\pr_2(f))\leq(\pr_1\overline{\otimes}\pr_2)(f),
\end{equation*}
with $P_2(f)$ a gamble on $\states_1$, defined by $P_2(f)(x_1)\coloneqq P_2(f(x_1,X_2))$\, for all $x_1\in\states_1$.
\end{proposition}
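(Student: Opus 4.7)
The plan is to exhibit an (unconditional) linear prevision $P$ on $\G(\states_1\times\states_2)$ with $P(f)=P_1(P_2(f))$ that dominates $P_1\underline{\otimes}P_2$ pointwise; self-conjugacy of $P$ will then automatically give the upper inequality from the lower one applied to $-f$. Concretely, I would set $P(g)\coloneqq P_1(P_2(g))$, with $P_2(g)\in\gambleson{1}$ defined as in the statement. First, $P_2(g)$ is a bona fide gamble: since $g$ is bounded, \ref{def:prev:bounded} and~\ref{def:prev:monotonicity} give $\abs{P_2(g)(x_1)}\leq\sup\abs{g}$ uniformly in $x_1$. Next, $P$ is itself a linear prevision: properties~\ref{def:prev:bounded}, \ref{def:prev:homo} and~\ref{def:prev:additive} transfer directly from $P_1$ and $P_2$---for instance $P_2(g+h)(x_1)=P_2(g)(x_1)+P_2(h)(x_1)$ by~\ref{def:prev:additive}, and then $P(g+h)=P(g)+P(h)$ by another application of~\ref{def:prev:additive} to $P_1$---so Corollary~\ref{corol:fulllinearPiffthreeaxioms} applies.

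For the lower inequality I would combine Theorem~\ref{theo:natext:LP} with~\eqref{eq:indnatext:SDG}--\eqref{eq:A21s}: $(P_1\underline{\otimes}P_2)(f)=\lp_\desir(f)$ with $\desir=\E(P_1)\otimes\E(P_2)=\posi((\A_{1\to 2}\cup\A_{2\to 1})\cup\gamblespos)$. Fix any $\mu<(P_1\underline{\otimes}P_2)(f)$; a short closure argument using~\ref{def:SDG:partialgain} and~\ref{def:SDG:convex} places $f-\mu$ in $\desir$, so $f-\mu=\sum_i\lambda_i g_i$ with $\lambda_i>0$ and each $g_i$ in $\A_{1\to 2}\cup\A_{2\to 1}\cup\gamblespos$. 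The heart of the proof is then a short case analysis showing $P(g_i)\geq 0$ for each generator. For $g_i\in\gamblespos$ this is~\ref{def:prev:bounded} applied in turn to $P_2$ and to $P_1$. For $g_i=f_2(X_2)\ind{B_1}(X_1)$, the identity $P_2(g_i)(x_1)=\ind{B_1}(x_1)P_2(f_2)$, which follows from~\ref{def:prev:homo} for $P_2$, gives $P(g_i)=P_2(f_2)\,P_1(\ind{B_1})\geq 0$: $P_1(\ind{B_1})\geq 0$ by~\ref{def:prev:bounded}, and the identification $P_2=\lp_{\E(P_2)}$ from Proposition~\ref{prop:smallestSDGfromLP} together with $f_2\in\E(P_2)$ yields $P_2(f_2)=\sup\{\nu:f_2-\nu\in\E(P_2)\}\geq 0$ (take $\nu=0$). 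The case $g_i=f_1(X_1)\ind{B_2}(X_2)$ is perfectly symmetric. Linearity of $P$ then gives $P(f)-\mu=\sum_i\lambda_iP(g_i)\geq 0$, and letting $\mu\uparrow(P_1\underline{\otimes}P_2)(f)$ delivers $(P_1\underline{\otimes}P_2)(f)\leq P(f)=P_1(P_2(f))$.

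The upper inequality is immediate from the lower one: linearity of $P_1$ and $P_2$ forces $P(-f)=-P(f)$, and by definition $(P_1\overline{\otimes}P_2)(f)=-(P_1\underline{\otimes}P_2)(-f)$, so applying the lower inequality to $-f$ yields $P(f)=-P(-f)\leq-(P_1\underline{\otimes}P_2)(-f)=(P_1\overline{\otimes}P_2)(f)$. I expect the main obstacle to be the generator-by-generator verification that $P(g_i)\geq 0$, and within it the small but essential observation that membership of $f_i$ in $\E(P_i)$ forces $P_i(f_i)\geq 0$ via Proposition~\ref{prop:smallestSDGfromLP}; everything else reduces to bookkeeping with the linearity of $P_1$ and $P_2$.
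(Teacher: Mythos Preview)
Your proposal is correct and follows essentially the same route as the paper's proof: both decompose $f-\mu\in\E(P_1)\otimes\E(P_2)$ into generators from $\A_{1\to2}\cup\A_{2\to1}\cup\gamblespos$, verify that $P_1(P_2(\cdot))$ is non-negative on each generator (using precisely the observation that $f_i\in\E(P_i)$ forces $P_i(f_i)\geq0$, which is the content of Lemma~\ref{lemma:nonnegativeELP}), and then obtain the upper inequality by conjugation. The only cosmetic difference is that you first package $P_1(P_2(\cdot))$ as a linear prevision via Corollary~\ref{corol:fulllinearPiffthreeaxioms}, whereas the paper simply applies the linearity of $P_1$ and $P_2$ directly.
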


\begin{example}\label{ex:noproduct}
Consider a linear prevision $\pr_1$ on $\gambleson{1}$, a linear prevision $\pr_2$ on $\gambleson{2}$ and any $f\in\G(\states_1\times\states_2)$ such that $\pr_1(\pr_2(f))\neq\pr_2(\pr_1(f))$, where $P_2(f)$ is defined as in Proposition~\ref{prop:dominatedbynested} and where, similarly, $P_1(f)$ is a gamble on $\states_2$ that is defined by $P_1(f)(x_2)\coloneqq P_1(f(X_1,x_2))$\, for all $x_2\in\states_2$. It then follows from Proposition~\ref{prop:dominatedbynested} and symmetry---reversing the role of $P_1$ and $P_2$---that
\begin{equation*}
(\pr_1\underline{\otimes}\pr_2)(f)
\leq
\min\{\pr_1(\pr_2(f)),\pr_2(\pr_1(f))\}
<\max\{\pr_1(\pr_2(f)),\pr_2(\pr_1(f))\}
\leq(\pr_1\overline{\otimes}\pr_2)(f).
\end{equation*}
Concrete examples where this situation occurs---that is, where $\pr_1(\pr_2(f))\neq\pr_2(\pr_1(f))$---can be found in \cite[Example 1]{Miranda2015460} and~\cite[Example 4]{Schervisch:2017}.
\hfill$\lozenge$
\end{example}

\section{Conclusions and Future Work}\label{sec:conclusions}

The main conclusion of this work is that by combining Williams-coherence with epistemic event-independence, we obtain a notion of independent natural extension that always exists, and that furthermore satisfies factorisation and external additivity. For weaker types of epistemic independence, including epistemic atom-independence, the existence result and the external additivity property remain valid, but factorisation then requires measurability conditions and the resulting inferences become less informative. For that reason, I think that when it comes to choosing between epistemic event-independence and epistemic atom-independence, the former should be preferred over the latter. In fact, I would advocate that from now on, and contrary to the current convention, epistemic independence should be taken to mean epistemic event-independence.

As far as future research is concerned, a first important step would be to extend our results from the case of two variables to that of any finite number of variables. Based on our own preliminary exploration of this topic, we expect that our proofs can be easily extended to that case. However, care will have to be taken when considering concepts such as independence, factorisation and external additivity, because for multiple variables, these have several variations; for finite state spaces,~\citep{deCooman:2011ey} provides an excellent starting point.

Next, these extended versions of our results could then be used to develop efficient algorithms for credal networks whose variables take values in infinite spaces, by suitably adapting existing algorithms for the finite case. See for example the work by~\cite{deCooman:2010gd},~\cite{DeBock:2014ts} and~\cite{de2015credal}.

On the more technical side, it would be useful to see whether our results can be extended from gambles---which are taken to be bounded---to the more general case of unbounded functions; in that case, establishing factorisation could prove to be tricky, because our proof for Corollary~\ref{corol:fact-measurable} relies rather heavily on the fact that gambles are bounded.

Finally, for variables that take values in Euclidean space, I would suggest to take a closer look at the case where $\B_1$ and $\B_2$ are restricted to the Lebesgue measurable events. Combined with a suitably chosen assessment of continuity, I think that this might lead to the development of a notion of independent natural extension that includes sigma additive product measures as a special case.

\acks{Much of the research that lead to this paper was conducted during a ten month research visit to the Imprecise Probability Group of IDSIA (Institute Dalle Molle for Artificial Intelligence), the members of which I would like to thank for their warm hospitality.
This visit was funded by the Research Foundation -- Flanders (FWO).

I would also like to thank a number of people for their feedback on previous versions. First, several anonymous reviewers, for their generous constructive comments on preliminary versions of this manuscript and the conference paper from which it grew \citep{debock2017williamsPMLR}. Second, Enrique Miranda, for commenting on an even more preliminary version, and for suggesting the idea of adopting a general notion of epistemic independence where $\B_1$ and $\B_2$ are allowed to be arbitrary. Third, Gert de Cooman, for his generous feedback on many of the things I do, including this work. And last but not least, I would like to thank Teddy Seidenfeld. The day before my presentation of this work at ISIPTA'17, I received an email from him in which he questioned the validity of my results. In particular, he presented a counterexample to Fubini and explained how this made him sceptical about my claim that the independent natural extension always exists, because his counterexample to Fubini implied that the independent natural extension of two precise models may not be fully precise itself. Fortunately---although I must admit that the email did scare me---my results withstood the test. The solution to this apparent conflict turned out to be, quite simply, that we were both correct. On the one hand, my notion of independent natural extension does indeed always exist. On the other hand, as I now explain in Section~\ref{sec:specialcases}, the independent natural extension of two precise models may indeed not be precise itself. My decision to include Section~\ref{sec:specialcases}---and Example~\ref{ex:noproduct} in particular---is a direct consequence of his email, and the results in that section are inspired by the insights that he shared with me during several follow up conversations. For both of these, I'd like to sincerely thank him.
}

\bibliography{general}

\appendix

\section{Proofs and Additional Material}\label{sec:proofs}

In order to avoid forward referencing---and the associated risk of circular reasoning---the ordering of the proofs in this appendix sometimes differs from the order in which the corresponding results appear in the main text. Most importantly, the \iftoggle{arxiv}{proofs and}{} additional material for Section~\ref{sec:probs} \iftoggle{arxiv}{are}{is} presented immediately after \iftoggle{arxiv}{those}{that} of Section~\ref{sec:modellinguncertainty}. Furthermore, since the proof of Proposition~\ref{prop:measurability:equivalenceforsigmafield} relies on Proposition~\ref{prop:measurable:sufficient:general}, the order of the proofs of these two results is reversed, and similarly for Propositions~\ref{prop:lowerenvelope} and~\ref{prop:lowerenvelope:onlyif:natex}.

\subsection{Proofs and Additional Material for Section~\ref{sec:prelim}}
\vspace{5pt}

\begin{proof}{\bf of Proposition~\ref{prop:measurable:sufficient:general}~}
Since $g\geq0$ is a gamble and therefore by definition bounded, there is some $\alpha\in\rationals_{>0}$ such that $0\leq g<\alpha$. Fix any $n\in\nats$ and let $g_n\in\gambles$ be defined by
\begin{equation*}
g_n\coloneqq\frac{1}{n}\alpha\sum_{k=1}^{n-1}\ind{A_k},
~\text{where, for all $k\in\{1,\dots,n-1\}$, }A_k\coloneqq\Big\{x\in\states\colon g(x)\geq\frac{k}{n}\alpha\Big\}.
\vspace{2pt}
\end{equation*}
For all $x\in\states$, we then find that
\vspace{3pt}
\begin{equation*}
g_n(x)
=
\frac{k_x}{n}\alpha
\leq
g(x)
\leq
\frac{k_x+1}{n}\alpha,
\text{~where we let~}
k_x\coloneqq\max\{k\in\{0,\dots,n-1\}\colon g(x)\geq\frac{k}{n}\alpha\},
\vspace{2pt}
\end{equation*}
which implies that $\abs{g(x)-g_n(x)}\leq\nicefrac{\alpha}{n}$. Since this is true for every $x\in\states$, this allows us to infer that $\sup\abs{g-g_n}\leq\nicefrac{\alpha}{n}$.

Consider now any $k\in\{1,\dots,n-1\}$. Since $\nicefrac{k}{n}\alpha\in\rationals_{\geq0}$, it follows from our assumptions on $g$ that $A_k$ is a finite union of pairwise disjoint events in $\B\cup\{\states,\emptyset\}$. Therefore, there is some $m_k\in\nats$ and, for all $i\in\{1,\dots,m_k\}$, some $B_{k,i}\in\B\cup\{\states,\emptyset\}$ such that $\ind{A_k}=\sum_{i=1}^{m_k}\ind{B_{k,i}}$. Since this is true for every $k\in\{1,\dots,n-1\}$, it follows that $g_n=\nicefrac{\alpha}{n}\sum_{k=1}^{n-1}\sum_{i=1}^{m_k}\ind{B_{k,i}}$. Since $g_n$ is clearly non-negative, and because $\ind{\states}=1$ and $\ind{\emptyset}=0$, it now follows from Definition~\ref{def:measurable:simple} that $g_n\in\gamblesnonneg$ is a simple $\B$-measurable gamble.

So, in summary then, for any fixed $n\in\nats$, we know that we can construct a simple $\B$-measurable gamble $g_n\in\gamblesnonneg$ such that $\sup\abs{g-g_n}\leq\nicefrac{\alpha}{n}$. Definition~\ref{def:measurable:uniform} therefore clearly implies that $g$ is $\B$-measurable.
\end{proof}
\vspace{-6pt}

\begin{proof}{\bf of Proposition~\ref{prop:measurability:equivalenceforsigmafield}~}
Consider any $\B\subseteq\nonemptypower$ such that $\B^*\coloneqq\B\cup\{\emptyset\}$ is a sigma field and fix some $g\in\gamblesnonneg$.

We first prove the `only if' part of the statement. So assume that $g$ is $\B^*$-measurable in the measure-theoretic sense~\cite[Definition~10.1]{Nielsen1997}. It then follows from~\cite[Corollary~10.5]{Nielsen1997} that $\{x\in\states\colon g(x)\geq r\}\in\B^*=\B\cup\{\emptyset\}$ for all $r\in\rationals_{\geq0}$. Therefore, it follows from Proposition~\ref{prop:measurable:sufficient:general} that $g$ is $\B$-measurable in the sense of Definition~\ref{def:measurable:uniform}.

We end by proving the `if' part of the statement. So assume that $g$ is $\B$-measurable in the sense of Definition~\ref{def:measurable:uniform}. This means that there is a sequence $\{g_n\}_{n\in\nats}$ of simple $\B$-measurable gambles in $\mathcal{G}_{\geq0}(\states)$ such that $\lim_{n\to+\infty}\sup\abs{g-g_n}=0$. Then on the one hand, since $\lim_{n\to+\infty}\sup\abs{g-g_n}=0$ implies that $\lim_{n\to+\infty}\abs{g(x)-g_n(x)}=0$ for all $x\in\states$, we know that $\{g_n\}_{n\in\nats}$ converges pointwise to $g$ on $\states$. On the other hand, for any $n\in\nats$, we know from Definition~\ref{def:measurable:simple} that there are $c_0\in\reals_{\geq0}$, $m\in\natswith$ and, for all $i\in\{1,\dots,m\}$, $c_i\in\reals_{\geq0}$ and $B_i\in\B$, such that $g=c_0+\sum_{i=1}^mc_i\ind{B_i}$. Let $B_0=\states$. Since $\ind{\states}=1$, and because $\B^*$ is a sigma field and therefore includes $\states$, we then find that $g=\sum_{i=0}^mc_i\ind{B_i}$, where, for all $i\in\{0,\dots,n\}$, $B_i\in\B^*$. \cite[Example~10.2]{Nielsen1997} therefore implies that $g_n$ is a $\B^*$-measurable function in the measure-theoretic sense. Since this is true for every $n\in\nats$, and because $\{g_n\}_{n\in\nats}$ converges pointwise to $g$ on $\states$, it now follows from~\cite[Corollary~10.11(a)]{Nielsen1997} that $g$ is $\B^*$-measurable in the measure-theoretic sense.
\end{proof}
\vspace{-6pt}

\begin{proof}{\bf of Corollary~\ref{corol:measurable:sufficient:allsets}~}
Immediate consequence of Proposition~\ref{prop:measurable:sufficient:general}.
\end{proof}

\vspace{-16pt}

\subsection{\iftoggle{arxiv}{Proofs and }{}Additional Material for Section~\ref{sec:modellinguncertainty}}\label{app:modellinguncertainty}
\vspace{5pt}

\iftoggle{arxiv}{
Contrary to what the length of this section of the appendix might suggest, it should be noted that many of the results and proofs in this section are essentially well-known.}{Many of the results in Section~\ref{sec:modellinguncertainty} and this corresponding part of the appendix are essentially well-known.} Historically, most of them date back to~\cite{williams1975,Williams:2007eu}. Our versions are basically just minor variations of his results, expressed in terms of lower previsions---instead of upper previsions---and without imposing structural constraints on the domain. Similar results can also be found in~\citep{Pelessoni:2009co}\iftoggle{arxiv}{---although often without or with only a minimal proof---}{ }and, for the case of Walley-coherence, in \citep{Miranda2010}. \iftoggle{arxiv}{}{For that reason, and in order not to dilute the core novel part of our work, we state these results without proof. For those interested, explicit proofs are available in an online arXiv version of this contribution~\citep{DeBock2018Williams}.}

\begin{lemma}\label{lemma:coherenceiffLP4}
For any $\A\subseteq\gambles$, $\E(\A)$ is a coherent set of desirable gambles on $\states$ if and only if it satisfies~\ref{def:SDG:partialloss}.
\end{lemma}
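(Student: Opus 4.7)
The ``only if'' direction is immediate from Definition~\ref{def:SDG}, since coherence includes~\ref{def:SDG:partialloss} as one of its requirements. All the work is in the ``if'' direction: assuming $\E(\A)$ satisfies~\ref{def:SDG:partialloss}, I need to verify the remaining three coherence axioms~\ref{def:SDG:partialgain}--\ref{def:SDG:convex}. My plan is to show that these three axioms hold for $\E(\A)$ \emph{unconditionally}, that is, purely by virtue of its construction as $\posi(\A\cup\gamblespos)$, with no assumption on $\A$ at all. The lemma then follows at once.

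First I would verify~\ref{def:SDG:partialgain}. If $f\geq 0$ and $f\neq 0$, then $f\in\gamblespos\subseteq\A\cup\gamblespos$, so by taking the trivial single-term positive combination ($n=1$, $\lambda_1=1$, $f_1=f$) in~\eqref{eq:posi}, we see $f\in\posi(\A\cup\gamblespos)=\E(\A)$.

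Next~\ref{def:SDG:homo} and~\ref{def:SDG:convex} both follow directly from the closure properties of $\posi(\cdot)$. For~\ref{def:SDG:homo}, if $f=\sum_{i=1}^n\lambda_if_i\in\E(\A)$ with $\lambda_i\in\realspos$ and $f_i\in\A\cup\gamblespos$, then for any $\lambda\in\realspos$ we have $\lambda f=\sum_{i=1}^n(\lambda\lambda_i)f_i$, which is again a positive linear combination of elements of $\A\cup\gamblespos$, hence lies in $\E(\A)$. For~\ref{def:SDG:convex}, concatenating the two representations of $f,g\in\E(\A)$ as positive linear combinations again yields a positive linear combination of elements of $\A\cup\gamblespos$, so $f+g\in\E(\A)$.

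Since~\ref{def:SDG:partialgain}--\ref{def:SDG:convex} hold automatically from the definition of $\E(\A)$, the only axiom that can fail is~\ref{def:SDG:partialloss}, and that is precisely our hypothesis. There is no serious obstacle here: the result is essentially a bookkeeping observation about the operator $\E(\cdot)$, designed so that it automatically enforces~\ref{def:SDG:partialgain}--\ref{def:SDG:convex}, leaving~\ref{def:SDG:partialloss} as the sole non-trivial side-condition for coherence.
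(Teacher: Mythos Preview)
Your proposal is correct and follows essentially the same approach as the paper's proof: the paper simply observes that Equation~\eqref{eq:natextop} implies $\E(\A)$ satisfies~\ref{def:SDG:partialgain}--\ref{def:SDG:convex}, so coherence reduces to~\ref{def:SDG:partialloss}. You have spelled out the verification of~\ref{def:SDG:partialgain}--\ref{def:SDG:convex} in slightly more detail than the paper, but the argument is the same.
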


\iftoggle{arxiv}{
\begin{proof}{\bf of Lemma~\ref{lemma:coherenceiffLP4}~}
Since Equation~\eqref{eq:natextop} implies that $\E(\A)$ satisfies~\ref{def:SDG:partialgain},~\ref{def:SDG:homo} and~\ref{def:SDG:convex}, this follows trivially from Definition~\ref{def:SDG}.
\end{proof}
\vspace{-16pt}
}{\vspace{-6pt}}

\begin{lemma}\label{lemma:nestedpropsofposandE}
Let $\A_1$ and $\A_2$ be two subsets of $\gambles$ such that $\A_1\subseteq\A_2$. Then
\vspace{4pt}
\begin{equation*}
\posi(\A_1)\subseteq\posi(\A_2)
~\text{ and }~
\E(\A_1)\subseteq\E(\A_2).
\end{equation*}
\end{lemma}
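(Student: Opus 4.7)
The plan is to verify the two set inclusions directly from the definitions in Equations~\eqref{eq:posi} and~\eqref{eq:natextop}. Both are purely formal monotonicity statements, so no real obstacle is expected.

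First I would establish $\posi(\A_1) \subseteq \posi(\A_2)$. Pick an arbitrary $f \in \posi(\A_1)$. By the definition in Equation~\eqref{eq:posi}, there exist $n \in \nats$, $\lambda_1,\dots,\lambda_n \in \realspos$, and $f_1,\dots,f_n \in \A_1$ such that $f = \sum_{i=1}^n \lambda_i f_i$. Since $\A_1 \subseteq \A_2$, we have $f_i \in \A_2$ for every $i \in \{1,\dots,n\}$, and hence the same expression witnesses that $f \in \posi(\A_2)$.

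Next I would derive $\E(\A_1) \subseteq \E(\A_2)$ as an immediate consequence. From $\A_1 \subseteq \A_2$ it follows that $\A_1 \cup \gamblespos \subseteq \A_2 \cup \gamblespos$. Applying the inclusion for $\posi$ just established to these two sets gives $\posi(\A_1 \cup \gamblespos) \subseteq \posi(\A_2 \cup \gamblespos)$, which by Equation~\eqref{eq:natextop} is precisely $\E(\A_1) \subseteq \E(\A_2)$.

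The only mildly subtle point is to make sure the argument for $\E$ is presented as a consequence of the first inclusion rather than re-proved from scratch, so that the proof remains crisp and avoids duplicating the positive-combination bookkeeping.
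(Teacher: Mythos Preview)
Your proposal is correct and is essentially the same approach as the paper's, which simply notes that the result follows trivially from Equations~\eqref{eq:posi} and~\eqref{eq:natextop}; you have merely spelled out that triviality explicitly.
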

\iftoggle{arxiv}{
\begin{proof}{\bf of Lemma~\ref{lemma:nestedpropsofposandE}~}
This follows trivially from Equations~\eqref{eq:posi} and~\eqref{eq:natextop}.
\end{proof}
\vspace{-16pt}
}{\vspace{-6pt}}

\begin{lemma}\label{lemma:natextDisD}
Let $\desir$ be a coherent set of desirable gambles on $\states$. Then $\E(\desir)=\desir$.
\end{lemma}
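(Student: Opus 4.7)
{\bf of Lemma~\ref{lemma:natextDisD}~(plan)}
The plan is to establish the two inclusions $\desir \subseteq \E(\desir)$ and $\E(\desir) \subseteq \desir$ separately, relying on Equations~\eqref{eq:posi} and~\eqref{eq:natextop} together with the coherence axioms~\ref{def:SDG:partialgain}--\ref{def:SDG:convex}.

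For the inclusion $\desir \subseteq \E(\desir)$, I would note that any $f \in \desir$ can be written as $f = 1 \cdot f$ with $1 \in \realspos$ and $f \in \desir \cup \gamblespos$, so $f \in \posi(\desir \cup \gamblespos) = \E(\desir)$ by Equations~\eqref{eq:posi} and~\eqref{eq:natextop}.

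For the converse inclusion $\E(\desir) \subseteq \desir$, I would first use axiom~\ref{def:SDG:partialgain} to observe that $\gamblespos \subseteq \desir$, and therefore $\desir \cup \gamblespos = \desir$. Hence $\E(\desir) = \posi(\desir)$. Then, taking any $f \in \posi(\desir)$, by Equation~\eqref{eq:posi} there exist $n \in \nats$, positive reals $\lambda_1, \dots, \lambda_n \in \realspos$ and gambles $f_1, \dots, f_n \in \desir$ with $f = \sum_{i=1}^n \lambda_i f_i$. Applying axiom~\ref{def:SDG:homo} to each $f_i$ yields $\lambda_i f_i \in \desir$ for every $i$, and then a repeated application of axiom~\ref{def:SDG:convex} (by induction on $n$) gives $f = \sum_{i=1}^n \lambda_i f_i \in \desir$.

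There is no real obstacle here; the result is essentially unpacking the definition of $\E$ and invoking the three positive coherence axioms. The only subtle point is simply recognising that axiom~\ref{def:SDG:partialgain} collapses $\desir \cup \gamblespos$ back to $\desir$, which reduces $\E(\desir)$ to $\posi(\desir)$ and makes the remaining argument immediate.
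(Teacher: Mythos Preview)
Your proposal is correct and follows essentially the same approach as the paper: the paper simply states that $\desir\subseteq\E(\desir)$ is trivial and that the converse inclusion is a straightforward consequence of the coherence of $\desir$, which is exactly what you unpack in detail using \ref{def:SDG:partialgain}--\ref{def:SDG:convex}.
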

\iftoggle{arxiv}{
\begin{proof}{\bf of Lemma~\ref{lemma:natextDisD}~}
$\desir$ is trivially a subset of $\E(\desir)$. The converse inclusion, that is, $\E(\desir)\subseteq\desir$, is a straightforward consequence of the coherence of $\desir$.
\end{proof}
\vspace{-16pt}
}{\vspace{-6pt}}

\begin{lemma}\label{lemma:extendD}
Let $\desir$ be a coherent set of desirable gambles on $\states$. If $f\in\gambles$ and $f\notin\desir\cup\{0\}$, then $\E(\desir\cup\{-f\})$ is a coherent set of desirable gambles on $\states$.
\end{lemma}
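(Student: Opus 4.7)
The plan is to use Lemma~\ref{lemma:coherenceiffLP4} as the backbone: since $\E(\desir\cup\{-f\})$ automatically satisfies \ref{def:SDG:partialgain}--\ref{def:SDG:convex} by construction, it suffices to verify \ref{def:SDG:partialloss}. So I will argue by contradiction, assuming there exists some $g\in\E(\desir\cup\{-f\})$ with $g\leq0$, and derive from this that $f\in\desir\cup\{0\}$, contradicting the hypothesis.

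Unfolding Equations~\eqref{eq:posi} and~\eqref{eq:natextop}, any such $g$ is a finite positive linear combination of elements of $\desir\cup\{-f\}\cup\gamblespos$. Collecting the coefficients attached to $-f$ into a single scalar $\alpha\geq0$, I would rewrite $g=-\alpha f+h$, where $h$ is either the zero gamble or a positive linear combination of elements of $\desir\cup\gamblespos$. Noting that \ref{def:SDG:partialgain} gives $\gamblespos\subseteq\desir$, and invoking \ref{def:SDG:homo} and \ref{def:SDG:convex} together with Lemma~\ref{lemma:natextDisD}, I would argue that in the second alternative $h\in\desir$.

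Now I would split into two cases. If $\alpha=0$, then $g=h$, so either $h=0\leq0$ and then directly $g=0$ is a problem one would hope to exclude\,---\,but actually this case already contradicts \ref{def:SDG:partialloss} for $\desir$, because in the nontrivial subcase we have $h\in\desir$ with $h\leq0$. (If $h=0$, we reach a slightly different contradiction: $g$ would have no nonzero summand, so it could only arise if the sum is empty, which is vacuous in the $\posi$ operator.) If $\alpha>0$, the inequality $g\leq0$ rearranges to $f\geq h/\alpha$. In the sub-subcase $h=0$, this gives $f\geq0$, so either $f=0$ or $f\in\gamblespos\subseteq\desir$, and in either event $f\in\desir\cup\{0\}$, the desired contradiction. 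In the sub-subcase $h\in\desir$, I would set $h'\coloneqq h/\alpha\in\desir$ (using \ref{def:SDG:homo}) and consider $f-h'\geq0$: if $f-h'=0$, then $f=h'\in\desir$; otherwise $f-h'\in\gamblespos\subseteq\desir$ by \ref{def:SDG:partialgain}, whence $f=(f-h')+h'\in\desir$ by \ref{def:SDG:convex}. Either way, $f\in\desir$, again contradicting the hypothesis.

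The main obstacle is bookkeeping in the decomposition $g=-\alpha f+h$: one has to be careful that ``positive linear combination'' in $\posi$ allows a term $-f$ to appear with multiplicity, so the $\alpha$ one extracts is genuinely nonnegative rather than strictly positive, and one must handle the degenerate case $h=0$ separately. Once this decomposition is made rigorous, the contradiction in each branch is a short manipulation using the axioms of coherence and the fact that $\gamblespos\subseteq\desir$.
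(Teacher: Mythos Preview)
Your proposal is correct and follows essentially the same approach as the paper's proof: reduce to checking \ref{def:SDG:partialloss} via Lemma~\ref{lemma:coherenceiffLP4}, decompose a hypothetical non-positive $g\in\E(\desir\cup\{-f\})$ as a $-f$-part plus a remainder in $\desir$ (absorbing $\gamblespos$ via \ref{def:SDG:partialgain}), and then split on whether the $-f$-coefficient is zero or positive to derive either a violation of \ref{def:SDG:partialloss} for $\desir$ or the conclusion $f\in\desir\cup\{0\}$. The paper's write-up is slightly more compact (it writes $g=\lambda h-\mu f$ with $h\in\desir$ directly and lets $\lambda=0$ absorb your $h=0$ subcase), but the logic is identical.
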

\iftoggle{arxiv}{
\begin{proof}{\bf of Lemma~\ref{lemma:extendD}~}
Consider any $f\in\gambles$ such that $f\notin\desir\cup\{0\}$. Because of Lemma~\ref{lemma:coherenceiffLP4}, it suffices to prove that $\E(\desir\cup\{-f\})$ satisfies \ref{def:SDG:partialloss}. So consider any $g\in\gambles$ such that $g\leq0$. In the remainder of this proof, we show that $g\notin\E(\desir\cup\{-f\})$. 

Assume \emph{ex absurdo} that $g\in\E(\desir\cup\{-f\})$. Since $\desir$ is coherent, this implies that $g=\lambda h-\mu f$, with $h\in\desir$, $\lambda,\mu\in\realsnonneg$ and $\lambda+\mu>0$. If $\mu=0$, then because $h\in\desir$, the coherence of $\desir$ implies that $g=\lambda h\in\desir$, which implies that $\desir$ does not satisfy \ref{def:SDG:partialloss}, a contradiction. Hence, it follows that $\mu>0$, which implies that $f=\nicefrac{1}{\mu}(\lambda h-g)$. Therefore, since $h\in\desir$ and $-g\geq0$, it follows from the coherence of $\desir$ that $f=0$ (if $\lambda=0$ and $g=0$) or $f\in\desir$. In both cases, we contradict our assumptions.
\end{proof}
\vspace{-6pt}
}{}

\iftoggle{arxiv}{
\begin{proof}{\bf of Proposition~\ref{prop:equivalentToPelessoniAndVicig}~} Consider any conditional lower prevision $\lp$ on $\C\subseteq\C(\states)$. 

We start by proving the `only if' part of the statement. So let us assume that $\lp$ is coherent. According to Definition~\ref{def:cohlp}, this implies that there is a coherent set of desirable gambles $\desir$ on $\states$ such that $\lp_\desir$ coincides with $\lp$ on $\C$. We need prove that $\lp$ is real-valued and that it satisfies Equation~\eqref{eq:prop:equivalentToPelessoniAndVicig}.

We begin by establishing that $\lp$ is real-valued. So fix any $(f,B)\in\C$. For all $\mu\in\reals$ such that $\mu<\inf f$, it then follows from the coherence of $\desir$---and~\ref{def:SDG:partialgain} in particular---that $[f-\mu]\ind{B}\in\desir$.  Similarly, for all $\mu\in\reals$ such that $\mu>\sup f$, it follows from~\ref{def:SDG:partialloss} that $[f-\mu]\ind{B}\notin\desir$. Hence, we find that $\inf f\leq\lp_\desir(f\vert B)\leq\sup f$. Since $f$ is a gamble and therefore by definition bounded, this implies that $\lp_\desir(f\vert B)$ is real-valued, which in turn implies that $\lp(f\vert B)$ is real-valued because $\lp_\desir$ coincides with $\lp$ on $\C$. Since $(f,B)\in\C$ was arbitrary, this means that $\lp$ is real-valued. 

Next, we show that $\lp$ satisfies Equation~\eqref{eq:prop:equivalentToPelessoniAndVicig}. Fix any $n\in\natswith$, choose any $\lambda_0,\dots,\lambda_n\in\reals_{\geq0}$ and $(f_0,B_0),\dots,(f_n,B_n)\in\mathcal{C}$, let $B\coloneqq\cup_{i=0}^nB_i$ and let $h\in\gambles$ be defined by
\begin{equation*}
h(x)
\coloneqq\sum_{i=1}^n
\lambda_i\ind{B_i}(x)
[f_i(x)-\lp(f_i\vert B_i)]
-\lambda_0\ind{B_0}(x)
[f_0(x)-\lp(f_0\vert B_0)]
~~\text{for all $x\in\states$}.
\end{equation*}
We need to prove that $\sup_{x\in B}h(x)\geq0$. In order to do that, we start by fixing some $\epsilon>0$. Let $\epsilon_0\coloneqq\epsilon$. Since $\lp_\desir$ coincides with $\lp$ on $\C$, it then follows from Equation~\eqref{eq:LPfromD} that
\begin{equation}\label{eq:prop:equivalentToPelessoniAndVicig:proof:1}
g_0\coloneqq[f_0-\lp(f_0\vert B_0)-\epsilon_0]\ind{B_0}=[f_0-\lp_\desir(f_0\vert B_0)-\epsilon_0]\ind{B_0}\notin\desir.
\end{equation}
Similarly, for all $i\in\{1,\dots,n\}$, Equation~\eqref{eq:LPfromD} implies that there is some $\epsilon_i\geq0$ such that $\epsilon_i\leq\epsilon$ and
\begin{equation}\label{eq:prop:equivalentToPelessoniAndVicig:proof:2}
g_i\coloneqq[f_i-\lp(f_i\vert B_i)+\epsilon_i]\ind{B_i}=[f_i-\lp_\desir(f_i\vert B_i)+\epsilon_i]\ind{B_i}\in\desir.
\end{equation}
Now let $g\coloneqq\lambda_0g_0-\sum_{i=1}^n\lambda_ig_i$ and assume \emph{ex absurdo} that $g\in\desir$. Since $\desir$ is coherent and therefore satisfies~\ref{def:SDG:homo} and~\ref{def:SDG:convex}, it then follows from Equation~\eqref{eq:prop:equivalentToPelessoniAndVicig:proof:2} that
\begin{equation*}
\lambda_0g_0
=
g+\sum_{i=1}^n\lambda_ig_i
=
g+\sum_{\substack{i=1\\\lambda_i\neq0}}^n\lambda_ig_i\in\desir.
\end{equation*}
If $\lambda_0=0$, this implies that $0\in\desir$, which contradicts~\ref{def:SDG:partialloss}. If $\lambda_0>0$, this implies that $g_0\in\desir$ because of~\ref{def:SDG:homo}, which contradicts Equation~\eqref{eq:prop:equivalentToPelessoniAndVicig:proof:1}. Since both cases lead to a contradiction, we conclude that $g\notin\desir$. Since the coherence of $\desir$ implies that $\gamblespos\subseteq\desir$, this allows us to infer that $g\notin\gamblespos$. 
Since $g(x)=0$ for all $x\in\states\setminus B$, this implies that $\inf_{x\in B}g(x)\leq0$. Hence, we find that
\begin{align*}
0\leq
-\inf_{x\in B}g(x)
=\sup_{x\in B}-g(x)
&=\sup_{x\in B}\big(h(x)+\sum_{i=0}^n\lambda_i\epsilon_i\ind{B_i}(x)\big)\\
&\leq\sup_{x\in B}h(x)+\sup_{x\in B}\big(\sum_{i=0}^n\lambda_i\epsilon_i\ind{B_i}(x)\big)
\leq\sup_{x\in B}h(x)+\sum_{i=0}^n\lambda_i\epsilon_i,\vspace{-3pt}
\end{align*}
which implies that
\vspace{-3pt}
\begin{equation*}
\sup_{x\in B}h(x)\geq-\sum_{i=0}^n\lambda_i\epsilon_i\geq-\epsilon\sum_{i=0}^n\lambda_i.
\vspace{8pt}
\end{equation*}
Since this is true for every $\epsilon>0$, it follows that $\sup_{x\in B}h(x)\geq0$, as desired.

It remains to prove the `if' part of the statement. So let us assume that $\lp$ is real-valued and that it satisfies Equation~\eqref{eq:prop:equivalentToPelessoniAndVicig}. We need to prove that $\lp$ is coherent.

Let $\A_{\lp}$ and $\E(\lp)$ be defined by Equation~\eqref{eq:AfromLP}. We start by proving that $\E(\lp)$ is a coherent set of desirable gambles on $\states$. Fix any $f\in\E(\lp)$. We then know from Equations~\eqref{eq:posi},~\eqref{eq:natextop} and~\eqref{eq:AfromLP} that\vspace{-4pt}
\begin{equation}
f=\sum_{i=1}^n\lambda_i\ind{B_i}[f_i-\mu_i]+\sum_{j=n+1}^m\lambda_jf_j\geq\sum_{i=1}^n\lambda_i\ind{B_i}[f_i-\mu_i]
\label{eq:prop:equivalentToPelessoniAndVicig:proof:3}
\end{equation}
for some $n\in\natswith$ and $m\in\nats$ such that $n\leq m$, with $\lambda_1,\dots,\lambda_m\in\reals_{>0}$, $f_{n+1},\dots,f_m\in\gamblespos$, $(f_1,B_1),\dots,(f_n,B_n)\in\C$ and $\mu_1,\dots,\mu_n\in\reals$ such that $\mu_i<\lp(f_i\vert B_i)$ for all $i\in\{1,\dots,n\}$. 
We consider two cases: $f\in\gamblespos$ and $f\notin\gamblespos$. 
If $f\in\gamblespos$, then $f\not\leq0$. If $f\notin\gamblespos$, then $n\neq0$. Therefore, if we let $A\coloneqq\cup_{i=1}^nB_i\neq\emptyset$, it follows from Equations~\eqref{eq:prop:equivalentToPelessoniAndVicig:proof:3} and~\eqref{eq:prop:equivalentToPelessoniAndVicig} that
\begin{align*}
\sup_{x\in A}f(x)
&\geq\sup_{x\in A}\Big(\sum_{i=1}^n\lambda_i\ind{B_i}(x)[f_i(x)-\mu_i]\Big)\\
&=\sup_{x\in A}\Big(\sum_{i=1}^n\lambda_i\ind{B_i}(x)[f_i(x)-\lp(f_i\vert B_i)]+\sum_{i=1}^n\lambda_i\ind{B_i}(x)[\lp(f_i\vert B_i)-\mu_i]\Big)\\
&\geq\sup_{x\in A}\Big(\sum_{i=1}^n\lambda_i\ind{B_i}(x)[f_i(x)-\lp(f_i\vert B_i)]\Big)+\inf_{x\in A}\Big(\sum_{i=1}^n\lambda_i\ind{B_i}(x)[\lp(f_i\vert B_i)-\mu_i]\Big)\\
&\geq\inf_{x\in A}\Big(\sum_{i=1}^n\lambda_i\ind{B_i}(x)[\lp(f_i\vert B_i)-\mu_i]\Big)
\geq\min_{1\leq i\leq n}\lambda_i[\lp(f_i\vert B_i)-\mu_i]>0,\\[-13pt]
\end{align*}
which implies that $f\not\leq0$. Hence, in both cases, we find that $f\not\leq0$. Since $f\in\E(\lp)$ is arbitrary, this implies that $\E(\lp)$ satisfies~\ref{def:SDG:partialloss}. Since $\E(\lp)\coloneqq\E(\A_{\lp})$, it now follows from Lemma~\ref{lemma:coherenceiffLP4} that $\E(\lp)$ is a coherent set of desirable gambles on $\states$.

In the remainder of this proof, we will show that $\lp_{\E(\lp)}$ coincides with $\lp$ on $\C$. Since $\E(\lp)$ is a coherent set of desirable gambles on $\states$, Definition~\ref{def:cohlp} then implies that $\lp$ is coherent, as desired. So fix any $(f,B)\in\C$. We need to prove that $\lp(f\vert B)=\lp_{\E(\lp)}(f\vert B)$. However, since Equation~\eqref{eq:AfromLP} implies that $[f-\mu]\ind{B}\in\E(\lp)$ for all $\mu<\lp(f\vert B)$, it follows trivially from Equation~\eqref{eq:LPfromD} that $\lp(f\vert B)\leq\lp_{\E(\lp)}(f\vert B)$. Therefore, it remains to prove that $\lp(f\vert B)\geq\lp_{\E(\lp)}(f\vert B)$.

Consider any $\mu\in\reals$ such that $[f-\mu]\ind{B}\in\E(\lp)$. We then know from Equations~\eqref{eq:posi},~\eqref{eq:natextop} and~\eqref{eq:AfromLP} that\vspace{-4pt}
\begin{equation}
[f-\mu]\ind{B}=\sum_{i=1}^n\lambda_i\ind{B_i}[f_i-\mu_i]+\sum_{j=n+1}^m\lambda_jf_j\geq\sum_{i=1}^n\lambda_i\ind{B_i}[f_i-\mu_i]
\label{eq:prop:equivalentToPelessoniAndVicig:proof:4}
\end{equation}
for some $n\in\natswith$ and $m\in\nats$ such that $n\leq m$, with $\lambda_1,\dots,\lambda_m\in\reals_{>0}$, $f_{n+1},\dots,f_m\in\gamblespos$, and $(f_1,B_1),\dots,(f_n,B_n)\in\C$ and $\mu_1,\dots,\mu_n\in\reals$ such that $\mu_i<\lp(f_i\vert B_i)$ for all $i\in\{1,\dots,n\}$. 
Therefore, if we let $A\coloneqq B\cup\big(\cup_{i=1}^nB_i\big)\neq\emptyset$, we find that
\begin{align}
&\sup_{x\in A}\Big(\sum_{i=1}^n\lambda_i\ind{B_i}(x)[\mu_i-\lp(f_i\vert B_i)]-\ind{B}(x)[\mu-\lp(f\vert B)]\Big)\notag\\
&\geq
\sup_{x\in A}\Big(\sum_{i=1}^n\lambda_i\ind{B_i}(x)[\mu_i-\lp(f_i\vert B_i)]-\ind{B}(x)[\mu-\lp(f\vert B)]
+
\sum_{i=1}^n\lambda_i\ind{B_i}(x)[f_i(x)-\mu_i]-\ind{B}(x)[f(x)-\mu]
\Big)\notag\\
&=
\sup_{x\in A}\Big(\sum_{i=1}^n\lambda_i\ind{B_i}(x)[f_i(x)-\lp(f_i\vert B_i)]
-\ind{B}(x)[f(x)-\lp(f\vert B)]\Big)\geq0\label{eq:prop:equivalentToPelessoniAndVicig:proof:5}\\[-14pt]\notag
\end{align}
where the first inequality follows from Equation~\eqref{eq:prop:equivalentToPelessoniAndVicig:proof:4} and the last inequality follows from Equation~\eqref{eq:prop:equivalentToPelessoniAndVicig}. 
Since $\lambda_i>0$ and $\mu_i-\lp(f_i\vert B_i)<0$, this implies that $\mu\leq\lp(f\vert B)$. Since this true for every $\mu\in\reals$ such that $[f-\mu]\ind{B}\in\E(\lp)$, it follows from Equation~\eqref{eq:LPfromD} that $\lp_{\E(\lp)}(f\vert B)\leq\lp(f\vert B)$.
\end{proof}
}{}

\iftoggle{arxiv}{
\begin{proof}{\bf of Proposition~\ref{prop:smallestSDGfromLP}~}
Consider any coherent set of desirable gambles $\desir$ on $\states$ such that $\lp_\desir$ coincides with $\lp$ on $\C$. Since $\lp$ is coherent, we know from Definition~\ref{def:cohlp} that there is at least one such set $\desir$. We start by proving that $\E(\lp)\subseteq\desir$.

Fix any $(f,B)\in\C$ and any $\mu<\lp(f\vert B)$. Since $\lp_\desir(f\vert B)=\lp(f\vert B)$, we know that $\mu<\lp_\desir(f\vert B)$, and therefore, it follows from Equation~\eqref{eq:LPfromD} that there is some $\mu^*\in\reals$ such that $[f-\mu^*]\ind{B}\in\desir$ and $\mu<\mu^*\leq\lp_\desir(f\vert B)$. Furthermore, since $\mu^*>\mu$ and $B\neq\emptyset$, we also know that $[\mu^*-\mu]\ind{B}\in\mathcal{G}_{>0}(\states)$, which implies that $[\mu^*-\mu]\ind{B}\in\desir$ because of~\ref{def:SDG:partialgain}. Since $[f-\mu^*]\ind{B}\in\desir$ and $[\mu^*-\mu]\ind{B}\in\desir$, it now follows from~\ref{def:SDG:convex} that $[f-\mu]\ind{B}=[f-\mu^*]\ind{B}+[\mu^*-\mu]\ind{B}\in\desir$. Since this is true for every $(f,B)\in\C$ and $\mu<\lp(f\vert B)$, we infer that $\A_{\lp}\subseteq\desir$, and therefore, because of Lemmas~\ref{lemma:nestedpropsofposandE} and~\ref{lemma:natextDisD}, that $\E(\lp)=\E(\A_{\lp})\subseteq\E(\desir)=\desir$.

Next, since $\desir$ is coherent and $\E(\lp)\subseteq\desir$, it follows from Definition~\ref{def:SDG} that $\E(\lp)$ satisfies~\ref{def:SDG:partialloss}. Therefore, and because $\E(\lp)=\E(\A_{\lp})$, it follows from Lemma~\ref{lemma:coherenceiffLP4} that $\E(\lp)$ is a coherent set of desirable gambles on $\states$. Hence, it remains to prove that $\lp_{\E(\lp)}$ coincides with $\lp$ on $\C$.

Fix any $(f,B)\in\C$. Then on the one hand, since $\E(\lp)\subseteq\desir$, we have that
\begin{equation*}
\lp_{\E(\lp)}(f\vert B)\leq\lp_{\desir}(f\vert B)=\lp(f\vert B).
\end{equation*}
On the other hand, since we know from Equation~\eqref{eq:AfromLP} that $[f-\mu]\ind{B}\in\E(\lp)$ for all $\mu<\lp(f\vert B)$, it follows from Equation~\eqref{eq:LPfromD} that $\lp_{\E(\lp)}(f\vert B)\geq\lp(f\vert B)$. Hence, we find that $\lp_{\E(\lp)}(f\vert B)=\lp(f\vert B)$. Since $(f,B)\in\C$ is arbitrary, this implies that $\lp_{\E(\lp)}$ coincides with $\lp$ on $\C$.
\end{proof}
\vspace{-16pt}
}{\vspace{-6pt}}

\begin{proposition}\label{prop:naturalextension}
Let $\lp$ be a coherent conditional lower prevision on $\C\subseteq\C(\states)$. Then for any $\C'\subseteq\C(\states)$ such that $\C\subseteq\C'$, the restriction of $\natexLP$ to $\C'$ is the pointwise smallest coherent conditional lower prevision on $\C'$ that coincides with $\lp$ on $\C$.
\end{proposition}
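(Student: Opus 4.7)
The plan is to leverage Proposition~\ref{prop:smallestSDGfromLP}, which characterises $\E(\lp)$ as the smallest coherent set of desirable gambles whose induced lower prevision matches $\lp$ on $\C$. The restriction of $\natexLP$ to $\C'$ is, by Equation~\eqref{eq:naturalextension}, just the restriction of $\lp_{\E(\lp)}$ to $\C'$. The proof splits naturally into three parts: coherence of the restriction, agreement with $\lp$ on $\C$, and pointwise minimality.

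For coherence, I would simply invoke Definition~\ref{def:cohlp}: since $\E(\lp)$ is a coherent set of desirable gambles on $\states$ (by Proposition~\ref{prop:smallestSDGfromLP}) and the restriction of $\natexLP$ to $\C'$ coincides with $\lp_{\E(\lp)}$ on $\C'$, this restriction is, by definition, a coherent conditional lower prevision on $\C'$. Agreement with $\lp$ on $\C$ is also immediate from Proposition~\ref{prop:smallestSDGfromLP}, which tells us that $\lp_{\E(\lp)}$ coincides with $\lp$ on $\C$; restricting to the superset $\C'\supseteq\C$ preserves this.

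The main content is the minimality argument. Let $\lp'$ be any coherent conditional lower prevision on $\C'$ that coincides with $\lp$ on $\C$. By Definition~\ref{def:cohlp}, there is a coherent set of desirable gambles $\desir'$ on $\states$ such that $\lp_{\desir'}$ coincides with $\lp'$ on $\C'$. Since $\C\subseteq\C'$ and $\lp'$ coincides with $\lp$ on $\C$, $\lp_{\desir'}$ also coincides with $\lp$ on $\C$. Proposition~\ref{prop:smallestSDGfromLP} then yields $\E(\lp)\subseteq\desir'$. From Equation~\eqref{eq:LPfromD} it is immediate that $\desir_1\subseteq\desir_2$ implies $\lp_{\desir_1}\leq\lp_{\desir_2}$ pointwise on $\C(\states)$, so for every $(f,B)\in\C'$ we obtain
\begin{equation*}
\natexLP(f\vert B)=\lp_{\E(\lp)}(f\vert B)\leq\lp_{\desir'}(f\vert B)=\lp'(f\vert B),
\end{equation*}
which is the desired pointwise domination.

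I do not anticipate any real obstacles: the statement is essentially a straightforward corollary of Proposition~\ref{prop:smallestSDGfromLP} together with the monotonicity of $\desir\mapsto\lp_\desir$, and indeed it is a mild generalisation of Proposition~\ref{prop:naturalextension:full} (which corresponds to the special case $\C'=\C(\states)$). The only subtle point to flag is that the proof exploits the full set of desirable gambles $\E(\lp)$ on $\states$—not some restriction tied to $\C'$—so one must be careful to apply Proposition~\ref{prop:smallestSDGfromLP} using the original domain $\C$ rather than $\C'$ when extracting $\E(\lp)\subseteq\desir'$.
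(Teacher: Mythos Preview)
Your proposal is correct and follows essentially the same approach as the paper's own proof: both use Proposition~\ref{prop:smallestSDGfromLP} to establish coherence and agreement with $\lp$ on $\C$, and then argue minimality by taking any competing $\lp'$, invoking Definition~\ref{def:cohlp} to obtain a coherent $\desir'$ with $\lp_{\desir'}=\lp'$ on $\C'$, and applying the minimality clause of Proposition~\ref{prop:smallestSDGfromLP} to deduce $\E(\lp)\subseteq\desir'$ and hence $\natexLP\leq\lp'$ on $\C'$.
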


\iftoggle{arxiv}{
\begin{proof}{\bf of Proposition~\ref{prop:naturalextension}~}
Let $\lp$ be a coherent conditional lower prevision on $\C\subseteq\C(\states)$ and consider any $\C'\subseteq\C(\states)$ such that $\C\subseteq\C'$. Then as we know from Proposition~\ref{prop:smallestSDGfromLP}, $\natexLP$ is a coherent conditional lower prevision on $\C(\states)$ that coincides with $\lp$ on $\C$. Since it follows trivially from Definition~\ref{def:cohlp} that restricting the domain of a coherent conditional lower prevision preserves its coherence, this implies that the restriction of $\natexLP$ to $\C'$ is a coherent conditional lower prevision on $\C'$ that coincides with $\lp$ on $\C$. It remains to show that it is dominated by any other coherent conditional lower prevision on $\C'$ that coincides with $\lp$ on $\C$.

So consider any coherent conditional lower prevision $\lp'$ on $\C'$ that coincides with $\lp$ on $\C$. Because of Definition~\ref{def:cohlp}, this implies that there is a coherent set of desirable gambles $\desir$ on $\states$ such that $\lp_\desir$ coincides with $\lp'$ on $\C'$. Since this clearly implies that $\lp_\desir$ coincides with $\lp$ on $\C$, it now follows from Proposition~\ref{prop:smallestSDGfromLP} that $\E(\lp)\subseteq\desir$, which implies that $\natexLP=\lp_{\E(\lp)}\leq\lp_\desir$. Hence, since $\lp_\desir$ coincides with $\lp'$ on $\C'$, we find that $\natexLP$ is dominated by $\lp'$ on $\C'$, as desired.
\end{proof}
\vspace{-6pt}
}

\iftoggle{arxiv}{
\begin{proof}{\bf of Proposition~\ref{prop:naturalextension:full}~}
Immediate consequence of Proposition~\ref{prop:naturalextension}.
\end{proof}
\vspace{-16pt}
}{\vspace{-6pt}}

\begin{lemma}\label{lemma:nonnegativeELPcond}
Let $\lp$ be a coherent conditional lower prevision on $\C\subseteq\C(\states)$. Then for any $(f,B)\in\C$ such that $f\,\ind{B}\in\E(\lp)$, we have that $\lp(f\vert B)\geq0$.
\end{lemma}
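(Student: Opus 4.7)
The plan is to show that $f\ind{B}\in\E(\lp)$ directly forces $\lp(f\vert B)\geq0$ via the characterisation of the natural extension $\natexLP$ as $\lp_{\E(\lp)}$.

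First, I would invoke Equation~\eqref{eq:LPfromD} applied to $\desir = \E(\lp)$: by definition,
\[
\lp_{\E(\lp)}(f\vert B) = \sup\{\mu\in\reals\colon [f-\mu]\ind{B}\in\E(\lp)\}.
\]
The assumption $f\ind{B}\in\E(\lp)$ means that $\mu=0$ belongs to this set, since $[f-0]\ind{B}=f\ind{B}$. Hence $\lp_{\E(\lp)}(f\vert B)\geq 0$.

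Next, I would recall that $\natexLP(f\vert B) \coloneqq \lp_{\E(\lp)}(f\vert B)$ by Equation~\eqref{eq:naturalextension}, so $\natexLP(f\vert B)\geq 0$. Finally, Proposition~\ref{prop:naturalextension:full} tells us that $\natexLP$ coincides with $\lp$ on $\C$, so for the given $(f,B)\in\C$ we conclude $\lp(f\vert B)=\natexLP(f\vert B)\geq 0$.

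There is no real obstacle here; the statement is essentially an unpacking of definitions. The only thing worth being careful about is that $(f,B)$ is assumed to lie in the original domain $\C$, which is precisely what lets us replace $\natexLP(f\vert B)$ by $\lp(f\vert B)$ via Proposition~\ref{prop:naturalextension:full}.
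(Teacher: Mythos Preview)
Your proof is correct and follows essentially the same approach as the paper: both use the chain $\lp(f\vert B)=\natexLP(f\vert B)=\lp_{\E(\lp)}(f\vert B)=\sup\{\mu\in\reals\colon [f-\mu]\ind{B}\in\E(\lp)\}\geq0$, invoking Proposition~\ref{prop:naturalextension:full}, Equation~\eqref{eq:naturalextension}, and Equation~\eqref{eq:LPfromD}, with the final inequality coming from $f\ind{B}\in\E(\lp)$.
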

\begin{proof}{\bf of Lemma~\ref{lemma:nonnegativeELPcond}}
It suffices to notice that
\begin{equation*}
\lp(f\vert B)=\natexLP(f\vert B)=\lp_{\E(\lp)}(f\vert B)
=\sup\{\mu\in\reals\colon [f-\mu]\ind{B}\in\E(\lp)\}\geq0,
\end{equation*}
where the equalities follow from Proposition~\ref{prop:naturalextension:full}, Equation~\eqref{eq:naturalextension} and Equation~\eqref{eq:LPfromD}, respectively, and where the final inequality follows from the fact that $f\,\ind{B}\in\E(\lp)$.
\end{proof}
\vspace{-16pt}

\begin{lemma}\label{lemma:nonnegativeELP}
Let $\lp$ be a coherent lower prevision on $\G\subseteq\gambles$. Then for any $f\in\G\cap\E(\lp)$, we have that $\lp(f)\geq0$.
\end{lemma}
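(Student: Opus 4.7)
The plan is to derive Lemma~\ref{lemma:nonnegativeELP} as a direct corollary of Lemma~\ref{lemma:nonnegativeELPcond} by viewing the coherent lower prevision $\lp$ on $\G$ as a coherent conditional lower prevision on $\C\coloneqq\{(f,\states)\colon f\in\G\}$. As noted in the paragraph following the definition of coherent conditional lower previsions, this identification is built into the notation: we simply set $\lp(f\vert\states)\coloneqq\lp(f)$, and the resulting conditional object is coherent precisely because $\lp$ is.

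The key bookkeeping step is to check that the set $\E(\lp)$ from the statement of the present lemma---which is computed with $\lp$ regarded as a lower prevision---coincides with the set $\E(\lp)$ that appears in Lemma~\ref{lemma:nonnegativeELPcond} when $\lp$ is regarded as a conditional lower prevision on $\C$. This is immediate from Equation~\eqref{eq:AfromLP}: since every pair in $\C$ has conditioning event $\states$ and since $\ind{\states}=1$, we have
\begin{equation*}
\A_{\lp}=\{[f-\mu]\ind{\states}\colon f\in\G,\mu<\lp(f)\}=\{f-\mu\colon f\in\G,\mu<\lp(f)\},
\end{equation*}
so that $\E(\lp)=\E(\A_{\lp})$ is the same set in both readings.

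Now fix any $f\in\G\cap\E(\lp)$. Then $(f,\states)\in\C$ and $f\,\ind{\states}=f\in\E(\lp)$, so Lemma~\ref{lemma:nonnegativeELPcond} immediately yields $\lp(f)=\lp(f\vert\states)\geq0$. There is no real obstacle here---the only thing one has to be careful about is the notational switch between lower previsions and conditional lower previsions with trivial conditioning event $\states$, and the verification that $\A_{\lp}$ is invariant under this switch, which follows instantly from $\ind{\states}=1$.
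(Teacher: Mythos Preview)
Your proof is correct and follows exactly the same approach as the paper: reduce to Lemma~\ref{lemma:nonnegativeELPcond} by taking $\C=\{(f,\states)\colon f\in\G\}$. The paper states this in a single line, whereas you spell out the verification that $\A_{\lp}$ (and hence $\E(\lp)$) is the same under both readings, which is a harmless extra detail.
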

\begin{proof}{\bf of Lemma~\ref{lemma:nonnegativeELP}}
Immediate consequence of Lemma~\ref{lemma:nonnegativeELPcond}, for $\C=\{(f,\states)\colon f\in\G\}$.
\end{proof}
\vspace{-10pt}

\iftoggle{arxiv}{
\begin{proof}{\bf of Proposition~\ref{prop:propertiesofLP}~}
Let $\lp$ be a coherent conditional lower prevision on $\C\subseteq\C(\states)$. Definition~\ref{def:cohlp} and Equation~\eqref{eq:LPfromD} then immediately imply that $\lp$ satisfies~\ref{def:lowerprev:monotonicity}. Now let $\natexLP$ be its natural extension to $\C(\states)$. We will prove that $\natexLP$ satisfies~\ref{def:lowerprev:bounded}--\ref{def:lowerprev:constantadditivity} and \ref{def:lowerprev:lowerbelowupper}. Since we know from Proposition~\ref{prop:naturalextension} that $\natexLP$ coincides with $\lp$ on $\C$, this then implies that $\lp$ satisfies~\ref{def:lowerprev:bounded}--\ref{def:lowerprev:constantadditivity} and \ref{def:lowerprev:lowerbelowupper} on its domain---that is, whenever the expressions are well-defined.

Since we know from Proposition~\ref{prop:naturalextension} that $\natexLP$ is coherent, it follows from Proposition~\ref{prop:equivalentToPelessoniAndVicig} that $\natexLP$ is real-valued and satisfies Equation~\eqref{eq:prop:equivalentToPelessoniAndVicig}, which means that it satisfies the notion of Williams coherence that is considered in~\cite{Pelessoni:2009co} and~\cite{Williams:2007eu}. It therefore follows from~\cite[Theorem~2]{Pelessoni:2009co} or~\cite[(A1*)--(A4*)]{Williams:2007eu} that $\natexLP$ satisfies~\ref{def:lowerprev:bounded}--\ref{def:lowerprev:GBR}. Consider now any $B\in\nonemptypower$. Since the operator $\natexLP(\cdot\vert B)\colon\gambles\to\reals$ satisfies~\ref{def:lowerprev:bounded}--\ref{def:lowerprev:superadditive}, it is a coherent lower prevision on $\gambles$ in the sense of~\cite[Section~2.5.5]{Walley:1991vk}. Therefore, it follows from~\cite[Section~2.6.1]{Walley:1991vk} that $\natexLP(\cdot\vert B)$ satisfies~\ref{def:lowerprev:uniformcontinuity},~\ref{def:lowerprev:constantadditivity} and~\ref{def:lowerprev:lowerbelowupper}. Since this is true for every $B\in\nonemptypower$, it follows that $\natexLP$ satisfies~\ref{def:lowerprev:uniformcontinuity},~\ref{def:lowerprev:constantadditivity} and~\ref{def:lowerprev:lowerbelowupper} as well.
\end{proof}
\vspace{-6pt}

\begin{proof}{\bf of Proposition~\ref{prop:cohlpifffouraxioms}~}
If $\lp$ is coherent, we know from Proposition~\ref{prop:equivalentToPelessoniAndVicig} that $\lp$ is real-valued and from Proposition~\ref{prop:propertiesofLP} that it satisfies~\ref{def:lowerprev:bounded}--\ref{def:lowerprev:GBR}. So assume that $\lp$ is real-valued and satisfies~\ref{def:lowerprev:bounded}--\ref{def:lowerprev:GBR}. We need to prove that $\lp$ is coherent.

Because of Proposition~\ref{prop:equivalentToPelessoniAndVicig}, it suffices to show for all $n\in\natswith$ and all choices of $\lambda_0,\dots,\lambda_n\in\reals_{\geq0}$ and $(f_0,B_0),\dots,(f_n,B_n)\in\mathcal{C}$ that
\begin{equation*}
\sup_{x\in B}
\Big(\,
\sum_{i=1}^n
\lambda_i\ind{B_i}(x)
[f_i(x)-\lp(f_i\vert B_i)]
-\lambda_0\ind{B_0}(x)
[f_0(x)-\lp(f_0\vert B_0)]
\Big)
\geq0,
\vspace{6pt}
\end{equation*}
with $B\coloneqq\cup_{i=0}^nB_i$. So let us consider any $n\in\natswith$, $\lambda_0,\dots,\lambda_n\in\reals_{\geq0}$ and $(f_0,B_0),\dots,(f_n,B_n)\in\mathcal{C}$ and let $B\coloneqq\cup_{i=0}^nB_i$. Since $B$ is a finite union of events in $\B$ and because $\B$ is closed under finite unions, we know that $B\in\B$.
 Therefore, and because $\mathcal{F}$ is a linear space such that $\ind{B}f\in\mathcal{F}$ and $\ind{B}\in\mathcal{F}$ for all $f\in\mathcal{F}$ and $B\in\B$, it now follows from~\eqref{def:lowerprev:superadditive} that
\vspace{3pt}
\begin{align*}
&\lp(\lambda_0\ind{B_0}[f_0-\lp(f_0\vert B_0)]\vert B)\\
&\geq
\lp\Big(\lambda_0\ind{B_0}[f_0-\lp(f_0\vert B_0)]-\sum_{i=1}^n\lambda_i\ind{B_i}[f_i-\lp(f_i\vert B_i)]\Big\vert B\Big)
+
\sum_{i=1}^n\lp\big(\lambda_i\ind{B_i}[f_i-\lp(f_i\vert B_i)]\big\vert B\big).
\end{align*}
Hence, since we know from~\ref{def:lowerprev:homo} and~\ref{def:lowerprev:GBR}---and our assumptions on $\mathcal{F}$ and $\B$---that
\begin{equation*}
\lp\big(\lambda_i\ind{B_i}[f_i-\lp(f_i\vert B_i)]\big\vert B\big)
=\lambda_i\lp\big(\ind{B_i}[f_i-\lp(f_i\vert B_i)]\big\vert B\big)=0
~~\text{for all $i\in\{0,\dots,n\}$,}
\end{equation*}
it follows from~\ref{def:lowerprev:bounded} that
\begin{align*}
0&\geq\lp\Big(\lambda_0\ind{B_0}[f_0-\lp(f_0\vert B_0)]-\sum_{i=1}^n\lambda_i\ind{B_i}[f_i-\lp(f_i\vert B_i)]\Big\vert B\Big)\\
&\geq\inf_{x\in B}\Big(\lambda_0\ind{B_0}(x)[f_0(x)-\lp(f_0\vert B_0)]-\sum_{i=1}^n\lambda_i\ind{B_i}(x)[f_i(x)-\lp(f_i\vert B_i)]\Big)\\
&=
-\sup_{x\in B}\Big(\sum_{i=1}^n\lambda_i\ind{B_i}(x)[f_i(x)-\lp(f_i\vert B_i)]-\lambda_0\ind{B_0}(x)[f_0(x)-\lp(f_0\vert B_0)]\Big),
\end{align*}
as desired.
\end{proof}
\vspace{-10pt}

\begin{proof}{\bf of Corollary~\ref{corol:cohlpifffouraxioms}~}
Immediate consequence of Proposition~\ref{prop:cohlpifffouraxioms}.
\end{proof}
\vspace{-10pt}

\begin{proof}{\bf of Corollary~\ref{corol:cohlpiffthreeaxioms}~}
Let $\mathcal{F}\coloneqq\G$ and $\B\coloneqq\{\states\}$. Since $\ind{\states}=1$, the conditions of Proposition~\ref{prop:cohlpifffouraxioms} are then clearly satisfied. Hence, since a lower prevision on $\G$ is by definition a conditional lower prevision on $\C\coloneqq\{(f,\states)\colon f\in\G\}$, it follows from Proposition~\ref{prop:cohlpifffouraxioms} that $\lp$ is coherent if and only if it is real-valued and satisfies~\ref{def:lowerprev:bounded}--\ref{def:lowerprev:GBR}. Therefore, the only thing that we still need to prove is that any real-valued lower prevision $\lp$ on $\G$ that satisfies \ref{def:lowerprev:bounded}--\ref{def:lowerprev:superadditive} will also satisfy \ref{def:lowerprev:GBR}.

So consider any real-valued lower prevision $\lp$ on $\G$ that satisfies \ref{def:lowerprev:bounded}--\ref{def:lowerprev:superadditive}. By definition, this operator is then a coherent lower prevision in the sense of~\citep[Section~2.5.5]{Walley:1991vk}, and it therefore follows from \cite[Section 2.6.1]{Walley:1991vk} that $\lp(f-\lp(f))=\lp(f)-\lp(f)=0$ for all $f\in\G$. Since the domain of a(n unconditional) lower prevision contains only couples of the form $(f,\states)$, this implies that $\lp$ satisfies \ref{def:lowerprev:GBR}.
\end{proof}
\vspace{-10pt}
}

\subsection{\iftoggle{arxiv}{Proofs and }Additional Material for Section~\ref{sec:probs}}
\vspace{5pt}

Here too, as in Appendix~\ref{app:modellinguncertainty}, it should be noted that \iftoggle{arxiv}{the results and proofs in this section}{the results in Section~\ref{sec:probs} and this corresponding part of the appendix} are essentially well-known; they are basically just minor variations of the results of~\cite{williams1975,Williams:2007eu} and \cite{Pelessoni:2009co}. \iftoggle{arxiv}{}{As before, we therefore state these results without proof. For those interested, explicit proofs and intermediate results are available in an online arXiv version of this contribution~\citep{DeBock2018Williams}.}

\vspace{10pt}

\iftoggle{arxiv}{
\begin{proof}{\bf of Proposition~\ref{prop:propertiesofP}~}
Because of definitions~\ref{def:prev} and~\ref{def:linearprev}, we know that $\pr$ is a coherent conditional lower prevision on $\C$ that satisfies Equation~\eqref{eq:self-conjugate}. Due to Proposition~\ref{prop:propertiesofLP}, this implies that $\pr$ satisfies~\ref{def:lowerprev:bounded}--\ref{def:lowerprev:monotonicity}. \ref{def:prev:bounded} and~\ref{def:prev:uniformcontinuity}--\ref{def:prev:monotonicity} follow trivially from~\ref{def:lowerprev:bounded} and~\ref{def:lowerprev:uniformcontinuity}--\ref{def:lowerprev:monotonicity}, respectively. \ref{def:prev:homo} holds because
\begin{equation*}
P(\lambda f\vert B)
=
\begin{cases}
\lambda P(f\vert B)&\text{~if $\lambda\geq0$}\\
-\lambda P(-f\vert B)&\text{~if $\lambda\leq0$}
\end{cases}
~~=\lambda P(f\vert B)
\end{equation*}
where the first equality follows from~\ref{def:lowerprev:homo} and the second one follows from Equation~\eqref{eq:self-conjugate}.
\ref{def:prev:additive} holds because
\begin{equation*}
P(f\vert B)+P(g\vert B)\leq P(f+g\vert B)=-P(-f-g\vert B)\leq-P(-f\vert B)-P(-g\vert B)=P(f\vert B)+P(g\vert B),
\end{equation*}
where the inequalities follow from~\ref{def:lowerprev:superadditive} and the equalities follow from Equation~\eqref{eq:self-conjugate}.
Finally, \ref{def:prev:GBR} holds because
\begin{equation*}
\pr(\ind{B}f\vert A)-\pr(f\vert A\cap B)\pr(B\vert A)
=\pr(\ind{B}f\vert A)-\pr(f\vert A\cap B)\pr(\ind{B}\vert A)
=\pr(\ind{B}[f-\pr(f\vert A\cap B)]\vert A)=0
\end{equation*}
where second equality follows from~\ref{def:prev:homo} and~\ref{def:prev:additive} and the third equality follows from~\ref{def:lowerprev:GBR}.
\end{proof}
}{}

\iftoggle{arxiv}{
\begin{proof}{\bf of Proposition~\ref{prop:linearPifffouraxioms}~}
If $\pr$ is a conditional linear prevision on $\C$, we know from Proposition~\ref{prop:cohlpifffouraxioms} that $\pr$ is real-valued and from Proposition~\ref{prop:propertiesofP} that it satisfies~\ref{def:prev:bounded}--\ref{def:prev:GBR}. So assume that $\pr$ is real-valued and satisfies~\ref{def:prev:bounded}--\ref{def:prev:GBR}. We need to prove that $\pr$ is a conditional linear prevision on $\C$.

Since $\pr$ satisfies~\ref{def:prev:bounded}--\ref{def:prev:additive}, it clearly satisfies~\ref{def:lowerprev:bounded}--\ref{def:lowerprev:superadditive} as well. $\pr$ also satisfies~\ref{def:lowerprev:GBR} because, for all $f\in\mathcal{F}$ and \mbox{$A,B\in\nonemptypower$} such that $A\in\B$ and $\emptyset\neq A\cap B\in\B$, it follows from~\ref{def:prev:homo}--\ref{def:prev:GBR} that
\begin{equation*}
\pr(\ind{B}[f-\pr(f\vert A\cap B)]\vert A)
=\pr(\ind{B}f\vert A)-\pr(f\vert A\cap B)\pr(\ind{B}\vert A)
=
\pr(\ind{B}f\vert A)-\pr(f\vert A\cap B)\pr(B\vert A)=0.
\end{equation*}
Since $\pr$ is real-valued and satisfies~\ref{def:lowerprev:bounded}--\ref{def:lowerprev:GBR}, and because we know from Definition~\ref{def:prev} that $\pr$ is a conditional lower prevision on $\C$, Proposition~\ref{prop:cohlpifffouraxioms} now implies that $\pr$ is coherent. Therefore, it follows from Definition~\ref{def:linearprev} that $\pr$ is a conditional linear prevision on $\C$.
\end{proof}
}{}

\iftoggle{arxiv}{
\begin{proof}{\bf of Corollary~\ref{corol:fulllinearPifffouraxioms}~}
Immediate consequence of Proposition~\ref{prop:linearPifffouraxioms}.
\end{proof}
\vspace{-10pt}
}{}

\iftoggle{arxiv}{
\begin{proof}{\bf of Corollary~\ref{corol:fulllinearPiffthreeaxioms}~}
Let $\mathcal{F}\coloneqq\G$ and $\B\coloneqq\{\states\}$. Since $\ind{\states}=1$, the conditions of Proposition~\ref{prop:linearPifffouraxioms} are then clearly satisfied. Hence, since a prevision on $\G$ is by definition a conditional prevision on $\C\coloneqq\{(f,\states)\colon f\in\G\}$, it follows from Proposition~\ref{prop:linearPifffouraxioms} that $\pr$ is coherent if and only if it is real-valued and satisfies~\ref{def:prev:bounded}--\ref{def:prev:GBR}. Therefore, the only thing that we still need to prove is that any real-valued prevision $\pr$ on $\G$ that satisfies \ref{def:prev:bounded}--\ref{def:prev:additive} will also satisfy \ref{def:prev:GBR}.

So consider any real-valued prevision $\pr$ on $\G$ that satisfies \ref{def:prev:bounded}--\ref{def:prev:additive}. Since the domain of a(n unconditional) prevision contains only couples of the form $(f,\states)$, proving \ref{def:prev:GBR} reduces to showing that $P(f\vert\states)=P(f\vert\states)P(\states\vert\states)$ for all $f\in\G$. Hence, it suffices to prove that $P(\states\vert\states)=1$.

On the one hand, we know from~\ref{def:prev:bounded} that $P(1)\geq1$. On the other hand, it follows from~\ref{def:prev:homo} and~\ref{def:prev:bounded} that $P(1)=-P(-1)\leq-(-1)=1$. Hence, we find that $P(1)=1$. Since $P(\states\vert\states)=P(\states)=P(\ind{\states})=P(1)$, this establishes the desired result.
\end{proof}
\vspace{-16pt}
}{}

\iftoggle{arxiv}{
\begin{lemma}\label{lemma:fulllinearlpifffouraxioms:withsup}
A conditional prevision on $\C(\states)$ is linear if and only if it is real-valued and satisfies~\ref{def:prev:boundedbysup} and~\ref{def:prev:homo}--\ref{def:prev:GBR}, with
\begin{enumerate}[label=P\arabic*':,ref=P\arabic*']
\item
$\pr(f\vert B)\leq\sup_{x\in B}f(x)$ for all $(f,B)\in\C(\states)$.\label{def:prev:boundedbysup}
\end{enumerate}
\end{lemma}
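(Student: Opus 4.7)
The plan is to reduce the claim to Corollary~\ref{corol:fulllinearPifffouraxioms}, which states that a conditional prevision on $\C(\states)$ is linear if and only if it is real-valued and satisfies~\ref{def:prev:bounded}--\ref{def:prev:GBR}. Since this characterisation shares \ref{def:prev:homo}, \ref{def:prev:additive} and \ref{def:prev:GBR} with the statement to be proved, the only thing left to show is that, for a real-valued conditional prevision on $\C(\states)$, axiom \ref{def:prev:bounded} is equivalent to \ref{def:prev:boundedbysup}.

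The key lever will be the self-conjugacy built into Definition~\ref{def:prev}: for every conditional prevision $\pr$ on $\C(\states)$ and every $(f,B)\in\C(\states)$ we have $(-f,B)\in\C(\states)$ and $\pr(f\vert B)=-\pr(-f\vert B)$. Assuming~\ref{def:prev:bounded} and applying it to $(-f,B)$ gives $\pr(-f\vert B)\geq\inf_{x\in B}(-f(x))=-\sup_{x\in B}f(x)$; negating both sides and using self-conjugacy yields $\pr(f\vert B)\leq\sup_{x\in B}f(x)$, which is~\ref{def:prev:boundedbysup}. The reverse implication is entirely symmetric: assuming~\ref{def:prev:boundedbysup}, apply it to $(-f,B)$ and use self-conjugacy to recover $\pr(f\vert B)\geq\inf_{x\in B}f(x)$, i.e.,~\ref{def:prev:bounded}.

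Since this equivalence holds at every $(f,B)\in\C(\states)$, a conditional prevision on $\C(\states)$ is real-valued and satisfies~\ref{def:prev:bounded}--\ref{def:prev:GBR} if and only if it is real-valued and satisfies~\ref{def:prev:boundedbysup} and~\ref{def:prev:homo}--\ref{def:prev:GBR}. Combining this with Corollary~\ref{corol:fulllinearPifffouraxioms} immediately yields the lemma. No substantive obstacle is anticipated: the argument amounts to two one-line applications of self-conjugacy together with an invocation of the already-proved Corollary~\ref{corol:fulllinearPifffouraxioms}.
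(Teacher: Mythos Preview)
Your proposal is correct and follows essentially the same approach as the paper: the paper's proof also observes that self-conjugacy (Equation~\eqref{eq:self-conjugate}) makes \ref{def:prev:bounded} and \ref{def:prev:boundedbysup} equivalent for a conditional prevision, and then invokes Corollary~\ref{corol:fulllinearPifffouraxioms}. Your version simply spells out the self-conjugacy argument in slightly more detail.
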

\begin{proof}{\bf of Lemma~\ref{lemma:fulllinearlpifffouraxioms:withsup}~}
Since $\pr$ is a conditional prevision and therefore satisfies Equation~\eqref{eq:self-conjugate}, we see that $\pr$ satisfies~\ref{def:prev:bounded} if and only if it satisfies~\ref{def:prev:boundedbysup}. Therefore, the result follows from Corollary~\ref{corol:fulllinearPifffouraxioms}.
\end{proof}
\vspace{-16pt}
}{}

\begin{proposition}\label{prop:lowerenvelope:onlyif:full}
Let $\lp$ be a coherent conditional lower prevision on $\C(\states)$ and let $\overline{P}$ be the corresponding conditional upper prevision on $\C(\states)$. Then there is a non-empty set $\mathbb{P}^*$ of conditional linear previsions on $\C(\states)$ such that
\begin{equation*}
\lp(f\vert B)=\min\{P(f\vert B)\colon P\in\mathbb{P}^*\}
\text{~~and~~}
\overline{P}(f\vert B)=\max\{P(f\vert B)\colon P\in\mathbb{P}^*\}
\end{equation*}
for all $(f,B)\in\C(\states)$. 
Furthermore, for any $(f,B)\in\C(\states)$ and $\alpha\in[\lp(f\vert B),\overline{P}(f\vert B)]$, there is some $P\in\mathbb{P}^*$ such that $P(f\vert B)=\alpha$.
\end{proposition}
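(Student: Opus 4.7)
The plan is to show that for every $(f_0, B_0) \in \C(\states)$ and every $\alpha \in [\lp(f_0\vert B_0), \overline{P}(f_0\vert B_0)]$, there exists a conditional linear prevision $P$ on $\C(\states)$ that dominates $\lp$ pointwise and satisfies $P(f_0\vert B_0) = \alpha$. I will then take $\mathbb{P}^*$ to be the collection of all conditional linear previsions on $\C(\states)$ that dominate $\lp$. Non-emptiness, the min/max envelope identities and the attainment clause all follow at once: each $P \in \mathbb{P}^*$ lies in $[\lp(f\vert B), \overline{P}(f\vert B)]$ by dominance and self-conjugacy, and prescribing $\alpha$ to be any value in this interval in the construction realises it.

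The technical core is an extension lemma of Hahn--Banach type. For any coherent conditional lower prevision $\lp'$ on $\C(\states)$, any $(g_0, C_0) \in \C(\states)$ and any $\beta \in [\lp'(g_0\vert C_0), -\lp'(-g_0\vert C_0)]$, I claim there is a coherent $\lp''$ on $\C(\states)$ dominating $\lp'$ pointwise with $\lp''(g_0\vert C_0) \geq \beta$ and $\lp''(-g_0\vert C_0) \geq -\beta$. To prove this I would work with desirable gambles: starting from $\E(\lp')$ via Proposition~\ref{prop:smallestSDGfromLP}, form
\[
\desir^{\star} \coloneqq \E\!\left(\E(\lp') \cup \{[g_0 - \beta']\ind{C_0} : \beta' < \beta\} \cup \{[\beta'' - g_0]\ind{C_0} : \beta'' > \beta\}\right)
\]
and set $\lp'' \coloneqq \lp_{\desir^{\star}}$. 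By Lemma~\ref{lemma:coherenceiffLP4} it suffices to show that $\desir^{\star}$ contains no non-positive gamble; after collecting coefficients of the newly added gambles, this reduces to a single coherence-type inequality on $\lp'$ that holds precisely because $\beta$ lies in the given interval, as can be verified through Proposition~\ref{prop:equivalentToPelessoniAndVicig}. This coherence verification is the main technical obstacle.

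With the extension lemma in hand, I fix $(f_0, B_0)$ and $\alpha$, and let $\Omega$ be the collection of coherent conditional lower previsions on $\C(\states)$ that dominate $\lp$ and satisfy $\lp'(f_0\vert B_0) \geq \alpha$ and $\lp'(-f_0\vert B_0) \geq -\alpha$, ordered by pointwise dominance. The lemma shows $\Omega \neq \emptyset$, and every chain in $\Omega$ admits its pointwise supremum as an upper bound: this supremum is finite by~\ref{def:lowerprev:lowerbelowupper}, and its coherence follows from Proposition~\ref{prop:equivalentToPelessoniAndVicig}, since the inequalities therein involve only finitely many assessments that can be $\epsilon$-realised simultaneously by a single element of the chain. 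Zorn's lemma then provides a maximal $P \in \Omega$. If $P$ were not self-conjugate, that is, if $P(g\vert C) < -P(-g\vert C)$ for some $(g,C) \in \C(\states)$, applying the extension lemma to $P$ with $\beta$ strictly between these two values would produce an element of $\Omega$ strictly larger than $P$, contradicting maximality. Hence $P$ is a conditional linear prevision, and the sandwich $\alpha \leq P(f_0\vert B_0) = -P(-f_0\vert B_0) \leq \alpha$ forces $P(f_0\vert B_0) = \alpha$, completing the construction.
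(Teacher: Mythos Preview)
Your approach is correct and takes a genuinely different route from the paper's. The paper's proof is essentially a citation: it verifies via Proposition~\ref{prop:equivalentToPelessoniAndVicig} that $\overline{P}$ satisfies Williams' condition~(A*), then invokes \cite[Theorem~2]{Williams:2007eu} as a black box to obtain the set $\mathbb{P}^*$ with the max representation and the attainment clause, and finally derives the min representation by conjugacy. You, by contrast, rebuild Williams' theorem from the inside with a Hahn--Banach/Zorn argument phrased in the desirable-gambles language of this paper.

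What your approach buys is self-containment: everything rests on Lemma~\ref{lemma:coherenceiffLP4}, Lemma~\ref{lemma:extendD}, Proposition~\ref{prop:smallestSDGfromLP} and Proposition~\ref{prop:equivalentToPelessoniAndVicig}, all proved here already. The paper's proof is far shorter but offloads the real work to an external reference. Mathematically the two are close---Williams' own argument is also a transfinite extension---but your version extends $\E(\lp')$ directly rather than manipulating prevision inequalities, which fits the paper's philosophy of deriving lower-prevision facts from desirable-gambles facts.

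Two small remarks on your sketch. First, the coherence of $\desir^\star$ in your extension lemma, which you rightly flag as the crux, can be handled cleanly by collecting the added gambles into a single term $\nu[g_0 - \gamma]\ind{C_0}$ (or its negative) and then invoking Lemma~\ref{lemma:extendD}: the hypothesis $\beta \leq -\lp'(-g_0\vert C_0)$ forces $[-g_0 + \gamma]\ind{C_0} \notin \E(\lp') \cup \{0\}$ for every $\gamma < \beta$, and symmetrically on the other side. Second, to bound the pointwise supremum of a chain you need \ref{def:lowerprev:bounded} applied to $-f$ (giving the uniform cap $\sup_{x\in B} f(x)$) and not just \ref{def:lowerprev:lowerbelowupper}, since the latter only compares each chain element to its own conjugate.
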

\iftoggle{arxiv}{
\begin{proof}{\bf of Proposition~\ref{prop:lowerenvelope:onlyif:full}~}
Since $\lp$ is coherent, it follows from Proposition~\ref{prop:equivalentToPelessoniAndVicig} that $\lp$ is real-valued and satisfies Equation~\eqref{eq:prop:equivalentToPelessoniAndVicig}. Therefore, for all $n\in\natswith$ and all choices of $\lambda_0,\dots,\lambda_n\in\reals_{\geq0}$ and $(f_0,B_0),\dots,(f_n,B_n)\in\mathcal{C}(\states)$, if we let $B\coloneqq\cup_{i=0}^nB_i$, we find that
\vspace{-4pt}
\begin{multline*}
\sup_{x\in B}
\Big(
\sum_{i=1}^n
\lambda_i\ind{B_i}(x)
[(-f_i(x))-\lp(-f_i\vert B_i)]
-
\lambda_0\ind{B_0}(x)
[(-f_0(x))-\lp(-f_0\vert B_0)]
\Big)\\
=\sup_{x\in B}
\Big(
\lambda_0\ind{B_0}(x)
[f_0(x)-\overline{P}(f_0\vert B_0)]
-\sum_{i=1}^n
\lambda_i\ind{B_i}(x)
[f_i(x)-\overline{P}(f_i\vert B_i)]
\Big)
\geq0.
\end{multline*}
Since this means that $\overline{P}$ satisfies condition (A*) in~\cite{Williams:2007eu}, it now follows from~\cite[Theorem~2, Definition~2 and Proposition~6]{Williams:2007eu} and Lemma~\ref{lemma:fulllinearlpifffouraxioms:withsup} that there is a non-empty set $\mathbb{P}^*$ of conditional linear previsions on $\C(\states)$ such that
\begin{equation}\label{eq:prop:lowerenvelope:onlyif:full:2}
\overline{P}(f\vert B)=\sup\{P(f\vert B)\colon P\in\mathbb{P}^*\}=\max\{P(f\vert B)\colon P\in\mathbb{P}^*\}
~~\text{for all $(f,B)\in\C(\states)$}
\end{equation}
and such that, for all $(f,B)\in\C(\states)$ and $\alpha\in[\lp(f\vert B),\overline{P}(f\vert B)]$, there is some $P\in\mathbb{P}^*$ such that $P(f\vert B)=\alpha$. 
The first equality in Equation~\eqref{eq:prop:lowerenvelope:onlyif:full:2} corresponds to~\cite[Theorem~2]{Williams:2007eu}; the second equality in Equation~\eqref{eq:prop:lowerenvelope:onlyif:full:2} and the statement about $\alpha$ is not stated in~\cite[Theorem~2]{Williams:2007eu} itself, but clearly follows from the end of its proof. \cite[Definition~2 and Proposition~6]{Williams:2007eu} and Lemma~\ref{lemma:fulllinearlpifffouraxioms:withsup} are needed solely for the purpose of establishing that what Williams calls a conditional prevision in~\cite[Theorem~2]{Williams:2007eu} is equivalent to what we here call a conditional linear prevision on $\C(\states)$. Finally, we have that
\begin{align*}
\lp(f\vert B)
=-\overline{P}(-f\vert B)
&=-\max\{P(-f\vert B)\colon P\in\mathbb{P}^*\}\\
&=-\max\{-P(f\vert B)\colon P\in\mathbb{P}^*\}
=\min\{P(f\vert B)\colon P\in\mathbb{P}^*\},
\end{align*}
where the second equality follows from Equation~\eqref{eq:prop:lowerenvelope:onlyif:full:2} and the third equality follows from Equation~\eqref{eq:self-conjugate}.
\end{proof}
\vspace{-16pt}
}{}

\iftoggle{arxiv}{
\begin{proposition}\label{prop:lowerenvelope:if}
Let $\mathbb{P}^*$ be a non-empty set of conditional linear previsions on $\C(\states)$, consider some domain $\C\subseteq\C(\states)$, and let $\lp$ be the conditional lower prevision on $\C$ that is defined by
\begin{equation}\label{eq:prop:lowerenvelope:if:1}
\lp(f\vert B)=\inf\{P(f\vert B)\colon P\in\mathbb{P}^*\}
\text{ for all $(f,B)\in\C$.}
\end{equation}
Then $\lp$ is coherent.
\end{proposition}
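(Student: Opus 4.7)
My plan is to prove coherence of $\lp$ via the characterisation in Proposition~\ref{prop:equivalentToPelessoniAndVicig}, that is, by showing that $\lp$ is real-valued and satisfies the Williams-coherence inequality~\eqref{eq:prop:equivalentToPelessoniAndVicig}. Working through Proposition~\ref{prop:equivalentToPelessoniAndVicig} rather than directly constructing a witnessing coherent set of desirable gambles seems cleanest, because each $P\in\mathbb{P}^*$ is itself coherent and therefore already satisfies that same inequality, which can then be transferred to $\lp$ by an $\epsilon$-infimum argument.

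Real-valuedness is the easy step. Since $\mathbb{P}^*\neq\emptyset$, pick any $P\in\mathbb{P}^*$. Each $P\in\mathbb{P}^*$ is a conditional linear prevision, hence by Proposition~\ref{prop:propertiesofP} satisfies~\ref{def:prev:bounded}, which together with self-conjugacy gives $\inf_{x\in B}f(x)\leq P(f\vert B)\leq\sup_{x\in B}f(x)$ for every $(f,B)\in\C(\states)$. Taking the infimum over $\mathbb{P}^*$ preserves these bounds, so for every $(f,B)\in\C$ we have $\inf_{x\in B}f(x)\leq\lp(f\vert B)\leq\sup_{x\in B}f(x)$, and both endpoints are finite because $f$ is a gamble.

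For the coherence inequality, fix $n\in\natswith$, $\lambda_0,\dots,\lambda_n\in\reals_{\geq0}$ and $(f_0,B_0),\dots,(f_n,B_n)\in\C$, and let $B\coloneqq\cup_{i=0}^n B_i$. Fix any $\epsilon>0$. Because $\lp(f_0\vert B_0)$ is the infimum of $P(f_0\vert B_0)$ over $\mathbb{P}^*$, there exists $P_0\in\mathbb{P}^*$ with $P_0(f_0\vert B_0)\leq\lp(f_0\vert B_0)+\epsilon$. By the definition of $\lp$, for every $i\in\{1,\dots,n\}$ we also have $\lp(f_i\vert B_i)\leq P_0(f_i\vert B_i)$. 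Combining these—using $\lambda_i\geq0$ and $\ind{B_i}(x)\geq0$ to preserve inequalities—gives, for every $x\in B$,
\begin{align*}
&\sum_{i=1}^n\lambda_i\ind{B_i}(x)[f_i(x)-\lp(f_i\vert B_i)]-\lambda_0\ind{B_0}(x)[f_0(x)-\lp(f_0\vert B_0)]\\
&\geq\sum_{i=1}^n\lambda_i\ind{B_i}(x)[f_i(x)-P_0(f_i\vert B_i)]-\lambda_0\ind{B_0}(x)[f_0(x)-P_0(f_0\vert B_0)]-\lambda_0\epsilon.
\end{align*}
Taking the supremum over $x\in B$ and using that $P_0$, being a coherent conditional linear prevision, itself satisfies Equation~\eqref{eq:prop:equivalentToPelessoniAndVicig} for the same choice of $\lambda_i$ and $(f_i,B_i)$, we obtain $\sup_{x\in B}(\cdots)\geq-\lambda_0\epsilon$. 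Since $\epsilon>0$ is arbitrary, the supremum is $\geq0$, as required.

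The step I expect to require the most care is the asymmetric $i=0$ term: the Williams-coherence inequality has a single ``commitment'' term with a minus sign, and pointwise bounds go in opposite directions for the $+\lambda_i$ and $-\lambda_0$ parts. The resolution is exactly that one only needs $P_0$ to be $\epsilon$-close to the infimum for the singled-out $(f_0,B_0)$; for all other $(f_i,B_i)$ the infimum property yields the inequality in the direction that is consistent with the positive sign. Once this asymmetry is handled correctly, the $\epsilon$-argument closes and Proposition~\ref{prop:equivalentToPelessoniAndVicig} delivers coherence of $\lp$.
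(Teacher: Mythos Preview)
Your proof is correct and follows essentially the same approach as the paper's own proof: both verify coherence via Proposition~\ref{prop:equivalentToPelessoniAndVicig}, establish real-valuedness from the bounds satisfied by each $P\in\mathbb{P}^*$, and prove the Williams-coherence inequality by an $\epsilon$-approximation argument in which a single $P$ is chosen $\epsilon$-close at the distinguished index $0$ and then dominates $\lp$ at the remaining indices. The only cosmetic difference is that the paper absorbs $\lambda_0$ into the choice of $P$ (writing $\lambda_0 P(f_0\vert B_0)\leq\lambda_0\lp(f_0\vert B_0)+\epsilon$, yielding a final bound of $-\epsilon$) whereas you obtain $-\lambda_0\epsilon$; both tend to zero.
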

\begin{proof}{\bf of Proposition~\ref{prop:lowerenvelope:if}~}
We will prove that $\lp$ is real-valued and that, for all $n\in\natswith$ and all choices of $\lambda_0,\dots,\lambda_n\in\reals_{\geq0}$ and $(f_0,B_0),\dots,(f_n,B_n)\in\mathcal{C}$,
\begin{equation}\label{eq:prop:lowerenvelope:if:2}
\sup_{x\in B}
\Big(\,
\sum_{i=1}^n
\lambda_i\ind{B_i}(x)
[f_i(x)-\lp(f_i\vert B_i)]
-\lambda_0\ind{B_0}(x)
[f_0(x)-\lp(f_0\vert B_0)]
\Big)
\geq0,
\vspace{6pt}
\end{equation}
with $B\coloneqq\cup_{i=0}^nB_i$. Proposition~\ref{prop:equivalentToPelessoniAndVicig} then implies that $\lp$ is coherent.

Let us first prove that $\lp$ is real-valued. Fix any $(f,B)\in\C$. For every $P\in\mathbb{P}^*$, it then follows from Proposition~\ref{prop:propertiesofP} and Lemma~\ref{lemma:fulllinearlpifffouraxioms:withsup} that $\inf_{x\in B}\leq P(f\vert B)\leq\sup_{x\in B}f(x)$. Hence, since $\mathbb{P}^*$ is non-empty, it follows from Equation~\eqref{eq:prop:lowerenvelope:if:1} that $\inf_{x\in B}\leq\lp(f\vert B)\leq\sup_{x\in B}f(x)$. Since $f$ is a gamble and therefore by definition bounded, this implies that $\lp(f\vert B)$ is real-valued. Since this is true for every $(f,B)\in\C$, it follows that $\lp$ is real-valued.

Finally, fix any $n\in\natswith$, any $\lambda_0,\dots,\lambda_n\in\reals_{\geq0}$ and $(f_0,B_0),\dots,(f_n,B_n)\in\mathcal{C}$, let $B\coloneqq\cup_{i=0}^nB_i$ and consider any $\epsilon>0$. It then follows from Equation~\eqref{eq:prop:lowerenvelope:if:1} that there is a conditional linear prevision $P\in\mathbb{P}^*$ on $\C(\states)$ such that $\lambda_0 P(f_0\vert B_0)\leq\lambda_0\lp(f_0\vert B_0)+\epsilon$. Furthermore, for any $i\in\{1,\dots,n\}$, Equation~\eqref{eq:prop:lowerenvelope:if:1} also implies that $P(f_i\vert B_i)\geq\lp(f_i\vert B_i)$. Hence, we find that
\begin{align*}
\sup_{x\in B}
\Big(\,
\sum_{i=1}^n
\lambda_i\ind{B_i}(x)
&[f_i(x)-\lp(f_i\vert B_i)]
-\lambda_0\ind{B_0}(x)
[f_0(x)-\lp(f_0\vert B_0)]
\Big)\\
&\geq\sup_{x\in B}
\Big(\,
\sum_{i=1}^n
\lambda_i\ind{B_i}(x)
[f_i(x)-\pr(f_i\vert B_i)]
-\lambda_0\ind{B_0}(x)
[f_0(x)-\pr(f_0\vert B_0)]
-\ind{B_0}\epsilon
\Big)\\
&\geq\sup_{x\in B}
\Big(\,
\sum_{i=1}^n
\lambda_i\ind{B_i}(x)
[f_i(x)-\pr(f_i\vert B_i)]
-\lambda_0\ind{B_0}(x)
[f_0(x)-\pr(f_0\vert B_0)]
\Big)-\epsilon\geq-\epsilon,
\end{align*}
where the last inequality follows from Proposition~\ref{prop:equivalentToPelessoniAndVicig} because we know from Definition~\ref{def:linearprev} that $P$ is coherent. Since $\epsilon>0$ is arbitrary, we obtain Equation~\eqref{eq:prop:lowerenvelope:if:2}, as desired.
\end{proof}
\vspace{-10pt}
}{}

\iftoggle{arxiv}{
\begin{proof}{\bf of Proposition~\ref{prop:lowerenvelope:onlyif:natex}~}
Since we know from Proposition~\ref{prop:naturalextension:full} that $\natexLP$ is coherent, Proposition~\ref{prop:lowerenvelope:onlyif:full} implies that there is a non-empty set $\mathbb{P}^*$ of conditional linear previsions on $\C(\states)$ such that
\begin{equation}\label{eq:prop:lowerenvelope:onlyif:natex:1}
\natexLP(f\vert B)=\min\{P(f\vert B)\colon P\in\mathbb{P}^*\}
\text{~~and~~}
\natexUP(f\vert B)=\max\{P(f\vert B)\colon P\in\mathbb{P}^*\}
\end{equation}
for all $(f,B)\in\C(\states)$ and such that, furthermore, for any $(f,B)\in\C(\states)$ and $\alpha\in[\lp(f\vert B),\overline{P}(f\vert B)]$, there is some $P\in\mathbb{P}^*$ such that $P(f\vert B)=\alpha$.

Consider now any $P\in\mathbb{P}^*$. For any $(f,B)\in\C$, Proposition~\ref{prop:naturalextension:full} and Equation~\eqref{eq:prop:lowerenvelope:onlyif:natex:1} then imply that
\begin{equation*}
\lp(f\vert B)=\natexLP(f\vert B)=-\natexUP(-f\vert B)=-\max\{\tilde P(f\vert B)\colon\tilde P\in\mathbb{P}^*\}\leq P(f\vert B).
\end{equation*}
Therefore, it follows that $P\in\mathbb{P}_{\lp}$ and, since this is true for every $P\in\mathbb{P}^*$, that $\mathbb{P}^*\subseteq\mathbb{P}_{\lp}$. Hence, it follows that for all $(f,B)\in\C(\states)$ and $\alpha\in[\natexLP(f\vert B),\natexUP(f\vert B)]$, there is some $P\in\mathbb{P}^*\subseteq\mathbb{P}_{\lp}$ such that $P(f\vert B)=\alpha$, which already establishes the final part of our proposition. 

The first part of the proposition follows from this last part. Fix any $(f,B)\in\C(\states)$. Since $\natexLP$ is coherent, we know from Proposition~\ref{prop:propertiesofLP} that $\natexLP(f\vert B)\leq\natexUP(f\vert B)$, which implies that the interval $[\natexLP(f\vert B),\natexUP(f\vert B)]$ is non-empty. The final part of our proposition therefore implies that $\mathbb{P}_{\lp}\neq\emptyset$, simply by choosing some $\alpha\in[\natexLP(f\vert B),\natexUP(f\vert B)]$ and considering the corresponding $P\in\mathbb{P}_{\lp}$. 

The only thing left to prove now is Equation~\eqref{eq:prop:lowerenvelope:onlyif:natex}. However, since $\natexUP(f\vert B)=-\natexLP(-f\vert B)$, the second part of that equation follows trivially from the first part. Hence, it suffices to prove that $\natexLP(f\vert B)=\min\{P(f\vert B\colon P\in\mathbb{P}_{\lp}\}$ for all $(f,B)\in\C(\states)$.

To this end, let $\tilde\lp$ be the conditional lower prevision on $\C(\states)$ that is defined by
\begin{equation}\label{eq:prop:lowerenvelope:onlyif:natex:2}
\tilde\lp(f\vert B)\coloneqq\inf\{P(f\vert B\colon P\in\mathbb{P}_{\lp}\}
\text{ for all $(f,B)\in\C(\states)$.}
\end{equation}
Then on the one hand, since $\mathbb{P}^*\subseteq\mathbb{P}_{\lp}$, it follows from Equation~\eqref{eq:prop:lowerenvelope:onlyif:natex:1} that for all $(f,B)\in\C(\states)$:
\begin{equation}\label{eq:prop:lowerenvelope:onlyif:natex:3}
\natexLP(f\vert B)=\min\{P(f\vert B)\colon P\in\mathbb{P}^*\}\geq\inf\{P(f\vert B)\colon P\in\mathbb{P}_{\lp}\}=\tilde\lp(f\vert B).
\end{equation}
On the other hand, for any $(f,B)\in\C$, Proposition~\ref{prop:naturalextension:full} and Equation~\eqref{eq:setofdominatingcondlinprev} imply that
\begin{equation*}
\natexLP(f\vert B)=\lp(f\vert B)\leq\inf\{P(f\vert B)\colon P\in\mathbb{P}_{\lp}\}=\tilde\lp(f\vert B).
\end{equation*}
By combining these two results, it follows that $\tilde\lp$ coincides with $\lp$ on $\C$. Furthermore, since $\mathbb{P}_{\lp}$ is a non-empty set of conditional linear previsions on $\C(\states)$, we know from Proposition~\ref{prop:lowerenvelope:if} that $\tilde\lp$ is coherent. Therefore, it follows from Proposition~\ref{prop:naturalextension:full} that $\tilde\lp(f\vert B)\geq\natexLP(f\vert B)$ for all $(f,B)\in\C(\states)$. Combined, with Equation~\eqref{eq:prop:lowerenvelope:onlyif:natex:2} and~\eqref{eq:prop:lowerenvelope:onlyif:natex:3}, this implies that
\begin{equation}\label{eq:prop:lowerenvelope:onlyif:natex:4}
\natexLP(f\vert B)
=\tilde\lp(f\vert B)
=\inf\{P(f\vert B)\colon P\in\mathbb{P}_{\lp}\}
\text{ for all $(f,B)\in\C(\states)$.}
\end{equation}
It remains to show that this infimum is actually a minimum. This can easily be inferred from Equation~\eqref{eq:prop:lowerenvelope:onlyif:natex:1}, as follows. For any $(f,B)\in\C(\states)$, Equation~\eqref{eq:prop:lowerenvelope:onlyif:natex:1} implies that there is some $P\in\mathbb{P}^*$ such that $P(f\vert B)=\natexLP(f\vert B)$ and, since $\mathbb{P}^*\subseteq\mathbb{P}_{\lp}$, this in turn implies that the infimum in Equation~\eqref{eq:prop:lowerenvelope:onlyif:natex:4} is a minimum.
\end{proof}
\vspace{-10pt}
}{}

\iftoggle{arxiv}{
\begin{proof}{\bf of Proposition~\ref{prop:lowerenvelope}~}
We first prove the `only if' part of the statement. So consider any coherent conditional lower prevision on $\C\subseteq\C(\states)$. For any $(f,B)\in\C$, we then have that
\begin{equation*}
\lp(f\vert B)=\natexLP(f\vert B)=\min\{P(f\vert B)\colon P\in\mathbb{P}_{\lp}\},
\end{equation*}
where $\natexLP$ is the natural extension of $\lp$ to $\C(\states)$, and where the first equality follows from Proposition~\ref{prop:naturalextension:full} and the second from Proposition~\ref{prop:lowerenvelope:onlyif:natex}. Since every minimum is an infimum, the `only if' part of the statement follows by choosing $\mathbb{P}^*=\mathbb{P}_{\lp}$.

The `if' part of the statement is an immediate consequence of Proposition~\ref{prop:lowerenvelope:if}.
\end{proof}
}{}
\vspace{-10pt}

\subsection{Proofs and Additional Material for Section~\ref{sec:indnatext}}
\vspace{5pt}

\subsubsection{The Sets of Desirable Gambles Part}
\vspace{5pt}

\begin{proposition}\label{prop:productcoherent:SDG}
$\desir_1\otimes\desir_2$ is a coherent set of desirable gambles on $\states_1\times\states_2$.
\end{proposition}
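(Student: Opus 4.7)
Since $\desir_1\otimes\desir_2=\E(\A_{1\to 2}\cup\A_{2\to 1})$ by Equation~\eqref{eq:indnatext:SDG}, Lemma~\ref{lemma:coherenceiffLP4} reduces the problem to verifying axiom~\ref{def:SDG:partialloss}: no gamble $f\leq 0$ lies in this set. I will argue by contradiction. Assuming such an $f$ exists and unfolding Equations~\eqref{eq:posi}, \eqref{eq:natextop}, \eqref{eq:A12s} and \eqref{eq:A21s}, I can write
\begin{equation*}
f=\underbrace{\sum_{i=1}^n\alpha_i f_{2,i}(X_2)\ind{B_{1,i}}(X_1)}_{=:\phi}+\underbrace{\sum_{j=1}^m\beta_j f_{1,j}(X_1)\ind{B_{2,j}}(X_2)}_{=:\psi}+\eta,
\end{equation*}
with $\alpha_i,\beta_j\in\realspos$, $f_{2,i}\in\desir_2$, $f_{1,j}\in\desir_1$, $B_{1,i}\in\B_1\cup\{\states_1\}$, $B_{2,j}\in\B_2\cup\{\states_2\}$ and $\eta\in\gamblesnonneg$, the decomposition being non-trivial because $0\notin\A_{1\to 2}\cup\A_{2\to 1}\cup\gamblespos$. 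Since $\eta\geq 0$, the pointwise inequality $\phi+\psi\leq 0$ on $\states_1\times\states_2$ follows.

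I would dispose of the boundary cases first. If $n=m=0$, then $f=\eta$ is a positive linear combination of elements of $\gamblespos$, hence $\geq 0$ and not identically zero, contradicting $f\leq 0$. If $n=0$ but $m\geq 1$, I pick any $x_2\in B_{2,1}$; then $\psi(\cdot,x_2)=\sum_{j\colon x_2\in B_{2,j}}\beta_j f_{1,j}$ is a non-empty positive linear combination of elements of $\desir_1$, hence lies in $\desir_1$ by axioms~\ref{def:SDG:homo}--\ref{def:SDG:convex}, while the inequality forces $\psi(\cdot,x_2)\leq-\eta(\cdot,x_2)\leq 0$, violating~\ref{def:SDG:partialloss} for $\desir_1$. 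The case $m=0,n\geq 1$ is symmetric.

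The essential case is $n,m\geq 1$. Here the plan is to exploit the dual characterisation of coherence (Proposition~\ref{prop:lowerenvelope}) via the coherent lower previsions $\lp_{\desir_1},\lp_{\desir_2}$ induced by $\desir_1,\desir_2$ through Equation~\eqref{eq:LPfromD}. Given any dominating linear previsions $P_1,P_2$, set $P_1P_2\,g:=P_1\bigl(x_1\mapsto P_2(g(x_1,X_2))\bigr)$; this functional is well defined, linear and monotone on $\G(\states_1\times\states_2)$ because $P_1,P_2$ are. Lemma~\ref{lemma:nonnegativeELP} gives $P_k(f_{k,\cdot})\geq\lp_{\desir_k}(f_{k,\cdot})\geq 0$ and of course $P_k(\ind{B})\in[0,1]$, so linearity expands
\begin{equation*}
P_1P_2\phi=\sum_i\alpha_i P_1(\ind{B_{1,i}})P_2(f_{2,i})\geq 0,\quad P_1P_2\psi=\sum_j\beta_j P_1(f_{1,j})P_2(\ind{B_{2,j}})\geq 0,
\end{equation*}
and $P_1P_2\eta\geq 0$, whereas monotonicity forces $P_1P_2 f\leq 0$. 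Hence all three non-negative contributions, and each individual non-negative product making up $P_1P_2\phi$ and $P_1P_2\psi$, must vanish. Because $P_1$ and $P_2$ vary independently across their respective dominating sets, Proposition~\ref{prop:lowerenvelope:onlyif:natex} then yields the null constraints $\overline{P}_{\desir_1}(\ind{B_{1,i}})\cdot\overline{P}_{\desir_2}(f_{2,i})=0$ and $\overline{P}_{\desir_1}(f_{1,j})\cdot\overline{P}_{\desir_2}(\ind{B_{2,j}})=0$ for every $i,j$.

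The main obstacle, and the final step I would need to work out in detail, is converting these null constraints—together with the pointwise inequality $\phi+\psi\leq 0$—into an outright contradiction with~\ref{def:SDG:partialloss} for $\desir_1$ or $\desir_2$. The difficulty is genuine: in a finitely additive setting a coherent $\desir$ may contain desirable gambles $g$ with $\overline{P}_{\desir}(g)=0$, so marginal information alone is not contradictory. The plan is to partition the indices according to which factor of each null constraint vanishes and, guided by this partition, choose witnesses $x_1,x_2$ in an appropriate partial intersection of the non-empty events $B_{1,i},B_{2,j}$ so that the slice $\psi(\cdot,x_2)$ (respectively $\phi(x_1,\cdot)$) reduces to a non-trivial positive linear combination of elements of $\desir_1$ (respectively $\desir_2$) that is pointwise non-positive, contradicting~\ref{def:SDG:partialloss}.
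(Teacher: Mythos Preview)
Your reduction via Lemma~\ref{lemma:coherenceiffLP4}, the decomposition of $f$, and the treatment of the degenerate cases $n=0$ or $m=0$ are all fine. The gap is exactly where you say it is, and it is a real one: the dual route through iterated linear previsions $P_1P_2$ only produces non-strict inequalities, and hence only the null constraints $\overline{P}_{\desir_1}(\ind{B_{1,i}})\cdot\overline{P}_{\desir_2}(f_{2,i})=0$ and $\overline{P}_{\desir_1}(f_{1,j})\cdot\overline{P}_{\desir_2}(\ind{B_{2,j}})=0$. As you correctly observe, these do not by themselves contradict coherence, because a desirable $g$ can perfectly well satisfy $\overline{P}_{\desir}(g)=0$. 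Your proposed endgame---partition the indices and pick witnesses $x_1,x_2$ so that some slice becomes a non-trivial desirable combination that is pointwise $\leq 0$---does not go through in general: knowing that a factor of a null constraint vanishes tells you nothing about the pointwise sign pattern of the corresponding $f_{2,i}$ or $f_{1,j}$, so there is no mechanism that forces, say, $\psi(\cdot,x_2)\leq 0$ on all of $\states_1$ rather than merely on the region where $\phi(\cdot,x_2)\geq 0$.

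The paper bypasses this obstacle by an entirely different, finite-dimensional argument. First it refines the events $\{B_{1,i}\}$ and $\{B_{2,j}\}$ to the atoms of the finite algebras they generate, so that they become pairwise disjoint; this yields finite index sets $\Y_1,\Y_2$. It then replaces each $f_{1,j}$ and $f_{2,i}$ by its block-wise supremum to obtain gambles $h_{1,j}$ on $\Y_1$ and $h_{2,i}$ on $\Y_2$, and shows that the corresponding finite function $h$ on $\Y_1\times\Y_2$ inherits $h\leq 0$ from $\phi+\psi\leq 0$. Now either one of the finite sets $\E(\{h_{1,j}\})$, $\E(\{h_{2,i}\})$ fails~\ref{def:SDG:partialloss}, in which case one immediately extracts a non-positive positive combination of the original $f_{1,j}$'s (or $f_{2,i}$'s), contradicting coherence of $\desir_1$ (or $\desir_2$); or both are coherent, and then a strict separation lemma on finite spaces (the paper's Lemma~\ref{lemma:GertandQuique}) furnishes strictly positive probability mass functions $p_1,p_2$ with $\sum p_1 h_{1,j}>0$ and $\sum p_2 h_{2,i}>0$ for every $j,i$. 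Because $|I|+|J|>0$ and all masses are strictly positive, the product expectation $E_{p_1\otimes p_2}(h)$ is strictly positive, contradicting $h\leq 0$. The decisive difference with your approach is precisely this strictness: discretisation to a finite problem lets one invoke a separating mass function with full support, whereas arbitrary dominating linear previsions give no such control.
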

\begin{proof}{\bf of Proposition~\ref{prop:productcoherent:SDG}~} 
Because of Lemma~\ref{lemma:coherenceiffLP4}, it suffices to prove~\ref{def:SDG:partialloss}. So consider any $f\in\desir_1\otimes\desir_2$ and assume \emph{ex absurdo} that $f\leq0$. We will prove that this leads to a contradiction.

Since $\desir_1$ and $\desir_2$ are coherent, they are closed with respect to positive scaling and finite sums. Therefore, and because $f\in\desir_1\otimes\desir_2=\E
\left(
\A_{1\to2}
\cup
\A_{2\to1}
\right)$, it follows from Equations~\eqref{eq:A12s} and~\eqref{eq:A21s} that
\vspace{2pt}
\begin{equation}\label{eq:fexplicit}
f=\sum_{i\in I}\ind{B_{1,i}}(X_1)f_{2,i}(X_2)+\sum_{j\in J}\ind{B_{2,j}}(X_2)f_{1,j}(X_1)+g,
\end{equation}
with $I$ and $J$ finite---possibly empty---index sets, with $B_{1,i}\in\nonemptypoweron{1}$ and $f_{2,i}\in\desir_2$ for all $i\in I$, with $B_{2,j}\in\nonemptypoweron{2}$ and $f_{1,j}\in\desir_1$ for all $j\in J$, with $g\geq0$, and where $g=0$ is only possible if $\abs{I}+\abs{J}>0$. 

Let us assume~\emph{ex absurdo} that $\abs{I}+\abs{J}=0$. Then on the one hand, since we know that  $g=0$ is only possible if $\abs{I}+\abs{J}>0$, it follows that $g\neq0$. On the other hand, $\abs{I}+\abs{J}=0$ also implies that $I=J=\emptyset$, and therefore, due to Equation~\eqref{eq:fexplicit}, that $f=g$. Since $g\geq0$ and $f\leq0$, this in turn implies that $g=0$, thereby contradicting the fact that $g\neq0$. Hence, it follows that at least one of the two \emph{ex absurdo} assumptions that we have so far made must be wrong. If $f\not\leq0$, then the proof is finished. For that reason, in the remainder of the proof, we can assume that $\abs{I}+\abs{J}\neq0$, and therefore, that $\abs{I}+\abs{J}>0$. The only \emph{ex absurdo} assumption that still remains is that $f\leq0$.

Now let $\{B_{1,k}\}_{k\in K}$ be the set consisting of those atoms of the algebra generated by $\{B_{1,i}\}_{i\in I}$ that belong to $\cup_{i\in I}B_{1,i}$ and, for all $k\in K$, let $f_{2,k}\coloneqq\sum_{i\in I\colon B_{1,k}\subseteq B_{1,i}}f_{2,i}$. The following properties are then easily verified. First, since $I$ is finite, $K$ is also finite. Secondly, $\abs{K}=0$ if and only if $\abs{I}=0$. Thirdly, for all $k\in K$, we have that $B_{1,k}\in\nonemptypoweron{1}$ and---since $\desir_1$ is coherent and therefore satisfies~\ref{def:SDG:convex}---that $f_{2,k}\in\desir_1$. Fourthly, $\sum_{k\in K}\ind{B_{1,k}}(X_1)f_{2,k}(X_2)$ is equal to $\sum_{i\in I}\ind{B_{1,i}}(X_1)f_{2,i}(X_2)$. Fifthly, the events in $\{B_{1,k}\}_{k\in K}$ are pairwise disjoint. For this reason, without loss of generality, we can assume the events $\{B_{1,i}\}_{i\in I}$ in Equation~\eqref{eq:fexplicit} to be pairwise disjoint. A completely similar argument leads us to conclude that the events $\{B_{2,j}\}_{j\in J}$ in Equation~\eqref{eq:fexplicit} can be assumed to be pairwise disjoint, again without loss of generality.

If $\{B_{1,i}\}_{i\in I}$ is a partition of $\states_1$, then we let $\Y_1\coloneqq I$. Otherwise, we let $\Y_1\coloneqq I\cup\{i^*\}$ and define $B_{1,i^*}\coloneqq\states_1\setminus\cup_{i\in I}B_{1,i}$. Similarly, we let $\Y_2\coloneqq J$ if $\{B_{2,j}\}_{j\in J}$ is a partition of $\states_2$, and let $\Y_2\coloneqq J\cup\{j^*\}$ and $B_{1,j^*}\coloneqq\states_2\setminus\cup_{j\in J}B_{2,j}$ otherwise.
Next, for every $i\in I$, we let $h_{2,i}$ be a gamble on $\Y_2$, defined by
\begin{equation}\label{eq:discretise2}
h_{2,i}(y_2)\coloneqq\sup\{f_{2,i}(x_2)\colon x_2\in B_{2,y_2}\}
~\text{ for all $y_2\in\Y_2$.}
\vspace{4pt}
\end{equation}
Similarly, for every $j\in J$, we let $h_{1,j}$ be a gamble on $\Y_1$, defined by
\begin{equation}\label{eq:discretise1}
h_{1,j}(y_1)\coloneqq\sup\{f_{1,j}(x_1)\colon x_1\in B_{1,y_1}\}
~\text{ for all $y_1\in\Y_1$.}
\vspace{2pt}
\end{equation}
Using these gambles on $\Y_1$ and $\Y_2$, we now construct a real-valued function $h$ on $\Y_1\times\Y_2$, defined by
\begin{equation}\label{eq:hforEcontradiction}
h(y_1,y_2)
\coloneqq
\sum_{i\in I}\ind{i}(y_1)h_{2,i}(y_2)+\sum_{j\in J}\ind{j}(y_2)h_{1,j}(y_1)
\text{~~for all $y_1\in\Y_1$ and $y_2\in\Y_2$}
\vspace{3pt}
\end{equation}
This function is non-positive, in the sense that $h\leq0$. In order to prove that, let us fix any $y_1\in\Y_1$ and $y_2\in\Y_2$. It then follows from Equations~\eqref{eq:discretise2} and~\eqref{eq:discretise1} that
\vspace{4pt}
\begin{equation*}
h(y_1,y_2)
=
\sum_{i\in I}\ind{i}(y_1)\sup_{x_2\in B_{2,y_2}}f_{2,i}(x_2)+\sum_{j\in J}\ind{j}(y_2)\sup_{x_1\in B_{1,y_1}}f_{1,j}(x_1).
\end{equation*}
Since $\ind{i}(y_1)$ can be non-zero for at most one $i\in I$ and $\ind{j}(y_2)$ can be non-zero for at most one $j\in J$, we know that each of the two summations on the right hand side contains at most one non-zero term. The suprema can therefore be moved outside of the summations, yielding
\begin{equation*}
h(y_1,y_2)
=
\sup_{x_1\in B_{1,y_1}}\sup_{x_2\in B_{2,y_2}}\left(\sum_{i\in I}\ind{i}(y_1)f_{2,i}(x_2)+\sum_{j\in J}\ind{j}(y_2)f_{1,j}(x_1)\right).
\end{equation*}
For the next step, we start by observing the following. For any $x_1\in B_{1,y_1}$ and any $i\in I$, since the sets $\{B_{1,i}\}_{i\in I}$ are pairwise disjoint, we know that $x_1\in B_{1,i}$ if and only if $y_1=i$, which implies that $\ind{i}(y_1)=\ind{B_{1,i}}(x_1)$. Similarly, for any $x_2\in B_{2,y_2}$ and any $j\in J$, since the sets $\{B_{2,j}\}_{j\in J}$ are pairwise disjoint, we know that $x_2\in B_{2,j}$ if and only if $y_2=j$, which implies that $\ind{j}(y_2)=\ind{B_{2,j}}(x_2)$. As an immediate consequence, it follows that
\begin{equation*}
h(y_1,y_2)
=
\sup_{x_1\in B_{1,y_1}}\sup_{x_2\in B_{2,y_2}}\left(\sum_{i\in I}\ind{B_{1,i}}(x_1)f_{2,i}(x_2)+\sum_{j\in J}\ind{B_{2,j}}(x_2)f_{1,j}(x_1)\right).
\end{equation*}
Finally, in combination with Equation~\eqref{eq:fexplicit}, this implies that
\begin{equation*}
h(y_1,y_2)
=
\sup_{x_1\in B_{1,y_1}}\sup_{x_2\in B_{2,y_2}}\left(
f(x_1,x_2)-g(x_1,x_2)
\right)
\leq0,
\end{equation*}
where, for the last inequality, we use the fact that $f\leq0$ and $g\geq0$. Since this true for every $y_1\in\Y_1$ and $y_2\in\Y_2$, it follows that $h\leq0$.

Now let $\A_1\coloneqq\{h_{1,j}\colon j\in J\}$ and $\A_2\coloneqq\{h_{2,i}\colon i\in I\}$ and assume~\emph{ex absurdo} that $\mathcal{H}_1\coloneqq\E(\A_1)$ and $\mathcal{H}_2\coloneqq\E(\A_2)$ are coherent sets of desirable gambles on $\Y_1$ and $\Y_2$, respectively. We will prove that this is impossible, by constructing a probability mass function $p$ on $\states_1\times\states_2$ such that the corresponding expectation of $h$ is both non-positive \emph{and}  positive, thereby obtaining a contradiction. In order to do that, we borrow an argument of De Cooman and Miranda (\citeyear[Proof of Proposition~15]{deCooman:2012vba}) that is based on a very useful lemma of them, which, in order to make this paper self-contained, is restated here in Lemma~\ref{lemma:GertandQuique}.

Since $\mathcal{H}_1$ is a coherent set of desirable gambles on $\Y_1$, it follows from Definition \ref{def:SDG}---and \ref{def:SDG:partialloss} in particular---that $0\notin\mathcal{H}_1=\E(\A_1)$. Therefore, and because $\Y_1$ and $J$---and hence also $\A_1$---are finite, it follows from Lemma~\ref{lemma:GertandQuique} that there is a probability mass function $p_1$ on $\Y_1$ such that $p_1(y_1)>0$ for all $y_1\in\Y_1$ and $\sum_{y_1\in\Y_1}p_1(y_1)h_{1,j}(y_1)>0$ for all $j\in J$. Using a completely analogous argument, we also infer that there is a probability mass function $p_2$ on $\Y_2$ such that $p_2(y_2)>0$ for all $y_2\in\Y_2$ and $\sum_{y_2\in\Y_2}p_2(y_2)h_{2,i}(y_2)>0$ for all $i\in I$. 

We now let $p$ be the probability mass function on $\Y_1\times\Y_2$ that is defined by $p(y_1,y_2)\coloneqq p_1(y_1)p_2(y_2)$ for all $y_1\in\Y_1$ and $y_2\in\Y_2$, and we let $E_p(h)$ be the expectation of $h$ with respect to $p$, as defined by
\begin{equation*}
E_p(h)
\coloneqq
\sum_{y_1\in\Y_1}
\sum_{y_2\in\Y_2}
p(y_1,y_2)h(y_1,y_2)
=
\sum_{y_1\in\Y_1}
\sum_{y_2\in\Y_2}
p_1(y_1)p_2(y_2)h(y_1,y_2).
\end{equation*}
Then on the one hand, since $h\leq0$, we have that $E_p(h)\leq0$. On the other hand, however, it follows from Equation~\eqref{eq:hforEcontradiction} that
\vspace{-3pt}
\begin{align*}
E_p(h)
&=\sum_{y_1\in\Y_1}
\sum_{y_2\in\Y_2}
p_1(y_1)p_2(y_2)\left(
\sum_{i\in I}\ind{i}(y_1)h_{2,i}(y_2)+\sum_{j\in J}\ind{j}(y_2)h_{1,j}(y_1)
\right)\\
&=\sum_{y_1\in\Y_1}
\sum_{y_2\in\Y_2}
p_1(y_1)p_2(y_2)
\sum_{i\in I}\ind{i}(y_1)h_{2,i}(y_2)+
\sum_{y_1\in\Y_1}
\sum_{y_2\in\Y_2}
p_1(y_1)p_2(y_2)
\sum_{j\in J}\ind{j}(y_2)h_{1,j}(y_1)\\
&=
\sum_{i\in I}
\sum_{y_1\in\Y_1}
p_1(y_1)
\ind{i}(y_1)
\sum_{y_2\in\Y_2}
p_2(y_2)
h_{2,i}(y_2)
+
\sum_{j\in J}
\sum_{y_2\in\Y_2}
p_2(y_2)
\ind{j}(y_2)
\sum_{y_1\in\Y_1}
p_1(y_1)
h_{1,j}(y_1)\\
&=
\sum_{i\in I}
p_1(i)
\sum_{y_2\in\Y_2}
p_2(y_2)
h_{2,i}(y_2)
+
\sum_{j\in J}
p_2(j)
\sum_{y_1\in\Y_1}
p_1(y_1)
h_{1,j}(y_1).
\vspace{4pt}
\end{align*}
For every $i\in I$, it follows from the properties of $p_1$ and $p_2$ that the corresponding term in this summation is positive. Similarly, for every $j\in J$, it follows from the properties of $p_1$ and $p_2$ that the corresponding term in this summation is positive. Since $\abs{I}+\abs{J}>0$, this implies that $E_p(h)>0$, thereby contradicting the fact that $E_p(h)\leq0$. Hence, it follows that one of the two remaining \emph{ex absurdo} assumptions is wrong. If $f\leq0$, then the proof is finished. Therefore, in the remainder of the proof, we can assume that there is at least one $i\in\{1,2\}$ for which $\mathcal{H}_i$ is incoherent. Without loss of generality, symmetry allows us to assume that $i=1$, that is, that $\mathcal{H}_1$ is incoherent. The only \emph{ex absurdo} assumption that still remains is that $f\leq0$.

Since $\mathcal{H}_1$ is incoherent, it follows from Lemma~\ref{lemma:coherenceiffLP4} that there is some $h^*\in\mathcal{H}_1$ such that $h^*\leq0$. Furthermore, since $h^*\in\mathcal{H}_1$, Equation~\eqref{eq:natextop} implies that $h^*=\lambda g^*+\sum_{j\in J}\lambda_j h_{1,j}$, for some $\lambda\in\realsnonneg$ and $g^*\in\mathcal{G}_{>0}(\Y_1)$ and, for all $j\in J$, some $\lambda_j\in\realsnonneg$, with $\lambda+\sum_{j\in J}\lambda_j>0$. If $\lambda_j=0$ for all $j\in J$, then $\lambda>0$ and $g^*=\nicefrac{1}{\lambda}h^*\leq0$, which is impossible because $g^*\in\mathcal{G}_{>0}(\Y_1)$. Therefore, we know that there is at least one $j\in J$ such that $\lambda_j>0$.

Now let $f_1\coloneqq\sum_{j\in J}\lambda_jf_{1,j}$ and fix any $x_1^*\in\states_1$. Since the events in $\{B_{1,y_1}\}_{y_1\in \Y_1}$ are pairwise disjoint, there will then be a unique $y_1^*\in\Y_1$ such that $x_1^*\in B_{1,y_1^*}$. For this particular choice of $y_1^*$, we then find that
\begin{equation*}
f_1(x_1^*)
=
\sum_{j\in J}\lambda_jf_{1,j}(x_1^*)
\leq
\sum_{j\in J}\lambda_j
\sup_{x_1\in B_{1,y_1^*}}
f_{1,j}(x_1)
=
\sum_{j\in J}\lambda_j
h_{1,j}(y_1^*)
=h^*(y_1^*)-\lambda g^*(y_1^*)\leq0,
\end{equation*}
where the first equality follows from Equation~\eqref{eq:discretise1} and the second inequality follows from the fact that $h^*\leq0$, $\lambda\geq0$ and $g^*\in\mathcal{G}_{>0}(\Y_1)$. Since this is true for every $x_1^*\in\states_1$, we infer that $f_1\leq0$. However, on the other hand, since there is at least one $j\in J$ such that $\lambda_j>0$, and because $f_{1,j}\in\desir_1$ for all $j\in J$, the coherence of $\desir_1$ implies that $f_1\in\desir_1$ and therefore, because of~\ref{def:SDG:partialloss}, that $f_1\not\leq0$. From this contradiction, it follows that one of our \emph{ex absurdo} assumptions must be false. Since the only remaining \emph{ex absurdo} assumption is that $f\leq0$, this concludes the proof.
\end{proof}
\vspace{-16pt}

\begin{lemma}\label{lemma:GertandQuique}\cite[Lemma~2]{deCooman:2012vba}
Let $\Omega$ be a finite set and consider some finite subset $\A$ of $\mathcal{G}(\Omega)$. Then $0\notin\E(\A)$ if and only if there is a probability mass function $p$ on $\Omega$ such that $p(\omega)>0$ for all $\omega\in\Omega$ and $\sum_{\omega\in\Omega}p(\omega)f(\omega)>0$ for all $f\in\A$.
\end{lemma}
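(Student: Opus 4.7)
The plan is to prove the two directions separately: the forward direction by a direct expectation argument, and the converse by a finite-dimensional theorem of the alternative. Both rest on a preliminary structural reformulation of the condition $0\notin\E(\A)$, which I would establish first. Writing $\A=\{f_1,\dots,f_k\}$, I claim $0\in\E(\A)$ if and only if there exist non-negative coefficients $\mu_1,\dots,\mu_k$, not all zero, with $\sum_{l=1}^{k}\mu_l f_l\leq0$. The ``if'' part is immediate from the definition: given such a $\mu$, either $\sum_l\mu_l f_l=0\in\posi(\A)\subseteq\E(\A)$, or else $g\coloneqq-\sum_l\mu_l f_l\in\mathcal{G}_{>0}(\Omega)$ and $0=\sum_l\mu_l f_l+g\in\posi(\A\cup\mathcal{G}_{>0}(\Omega))=\E(\A)$. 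For the ``only if'' part I would expand any identity $0=\sum_i\lambda_i h_i$ with $\lambda_i>0$ and $h_i\in\A\cup\mathcal{G}_{>0}(\Omega)$, split the sum according to whether $h_i\in\A$ or $h_i\in\mathcal{G}_{>0}(\Omega)$, group equal $f_l$'s to define non-negative $\mu_l$, and use a short case analysis to check that they cannot all vanish (if they did, the $\mathcal{G}_{>0}$-part would have to sum to $0$, which is impossible since its summands are non-negative and nonzero with strictly positive weights).

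Armed with this reformulation, the forward direction is short: if the $p$ of the lemma exists and yet $0\in\E(\A)$, pick $\mu$ as above and take $p$-expectations of $\sum_l\mu_l f_l\leq0$; the left side is a non-negative combination of the strictly positive numbers $\sum_\omega p(\omega)f_l(\omega)$ with at least one strictly positive weight, hence itself strictly positive, contradicting the inequality.

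For the converse I would identify $\mathcal{G}(\Omega)$ with $\mathbb{R}^{|\Omega|}$ and form the $(k+|\Omega|)\times|\Omega|$ matrix $\tilde{M}$ whose first $k$ rows are the gambles $f_1,\dots,f_k$ and whose remaining $|\Omega|$ rows are the identity $I_{|\Omega|}$. Then I would apply Gordan's theorem, which supplies exactly one of: \emph{(i)} a vector $p\in\mathbb{R}^{|\Omega|}$ with $\tilde{M}p>0$ componentwise, or \emph{(ii)} a nonzero non-negative $(y,z)\in\mathbb{R}^{k}_{\geq0}\times\mathbb{R}^{|\Omega|}_{\geq0}$ with $\tilde{M}^{T}(y,z)=\sum_{l}y_l f_l+z=0$. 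Alternative (ii) yields $\sum_l y_l f_l=-z\leq0$ and forces $y\neq0$ (otherwise $z=0$, contradicting nonzeroness of $(y,z)$), which by the reformulation is exactly $0\in\E(\A)$. Alternative (i) unpacks (using the identity block) to $p(\omega)>0$ for every $\omega\in\Omega$ together with $\sum_\omega p(\omega)f_l(\omega)>0$ for every $l$; after normalising $p$ to sum to one, this is precisely the probability mass function required by the lemma.

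The only substantive step will be the structural reformulation; once it is in place, the argument reduces to a routine application of a standard finite-dimensional alternative theorem. If one prefers to avoid citing Gordan's theorem by name, the main obstacle is essentially the same: one must separate the closed polyhedral cone $\posi(\A)$ from the non-positive orthant $\mathbb{R}^{|\Omega|}_{\leq0}$ (whose intersection is $\{0\}$ by the reformulation) by a hyperplane, and then perturb the separating functional slightly to enforce strict positivity on every coordinate, which requires the same book-keeping but no new ideas.
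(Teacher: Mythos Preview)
The paper does not prove this lemma itself; it is quoted from \cite[Lemma~2]{deCooman:2012vba} and used as a black box inside the proof of Proposition~\ref{prop:productcoherent:SDG}. So there is nothing in the paper to compare your argument against.

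Your proof is correct. The preliminary reformulation---that $0\in\E(\A)$ if and only if there are non-negative coefficients $\mu_1,\dots,\mu_k$, not all zero, with $\sum_l\mu_l f_l\leq0$---is sound, including the case analysis ruling out the possibility that all weight in a decomposition $0=\sum_i\lambda_i h_i$ sits on the $\mathcal{G}_{>0}(\Omega)$ side. Stacking the gambles $f_1,\dots,f_k$ on top of the identity matrix and applying Gordan's alternative is exactly the right device: the identity block is what forces every coordinate of the separating vector $p$ to be strictly positive, which is the extra requirement the lemma imposes beyond ordinary separation. Your argument would serve as a self-contained replacement for the external citation.
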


\begin{proposition}\label{prop:productindependent:SDG}
$\desir_1\otimes\desir_2$ is an independent product of $\desir_1$ and $\desir_2$.
\end{proposition}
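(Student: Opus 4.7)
My plan is to unify the two remaining independent-product requirements on $\desir_1\otimes\desir_2$ — namely, that it has $\desir_1,\desir_2$ as its marginals and that it is epistemically independent — into the single claim that, for every $i,j$ with $\{i,j\}=\{1,2\}$ and every $B_j\in\B_j\cup\{\states_j\}$,
\begin{equation*}
\marg_i(\desir_1\otimes\desir_2\vert B_j)=\desir_i.
\end{equation*}
Specialising to $B_j=\states_j$ recovers the marginal condition, while varying $B_j\in\B_j$ delivers epistemic independence in the sense of Definition~\ref{def:irrSDG}. Coherence of $\desir_1\otimes\desir_2$ is already supplied by Proposition~\ref{prop:productcoherent:SDG}.

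The inclusion $\desir_i\subseteq\marg_i(\desir_1\otimes\desir_2\vert B_j)$ will be read off the construction: for any $f_i\in\desir_i$, the gamble $f_i(X_i)\ind{B_j}(X_j)$ lies in $\A_{j\to i}\subseteq\desir_1\otimes\desir_2$ by Equations~\eqref{eq:A12s}--\eqref{eq:A21s}, so $f_i\in\marg_i(\desir_1\otimes\desir_2\vert B_j)$.

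The converse inclusion is the substantive part, and I would attack it by an ``enlarge-and-contradict'' argument. Assume \emph{ex absurdo} that some $f_i\notin\desir_i$ satisfies $f_i(X_i)\ind{B_j}(X_j)\in\desir_1\otimes\desir_2$. The degenerate case $f_i=0$ is disposed of directly, because then $f_i(X_i)\ind{B_j}(X_j)=0\in\desir_1\otimes\desir_2$ already contradicts~\ref{def:SDG:partialloss}. If $f_i\neq0$, Lemma~\ref{lemma:extendD} yields a coherent extension $\desir_i'\coloneqq\E(\desir_i\cup\{-f_i\})$; set $\desir_j'\coloneqq\desir_j$ and pass to the larger product $\desir_1'\otimes\desir_2'$, which is itself coherent by Proposition~\ref{prop:productcoherent:SDG}. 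The key observation is that enlarging one local set enlarges $\A_{1\to 2}\cup\A_{2\to 1}$, so Lemma~\ref{lemma:nestedpropsofposandE} gives $\desir_1\otimes\desir_2\subseteq\desir_1'\otimes\desir_2'$, whence $f_i(X_i)\ind{B_j}(X_j)\in\desir_1'\otimes\desir_2'$. Simultaneously, $-f_i\in\desir_i'$ together with $B_j\in\B_j\cup\{\states_j\}$ places $-f_i(X_i)\ind{B_j}(X_j)$ in $\A'_{j\to i}\subseteq\desir_1'\otimes\desir_2'$. Adding these two elements via~\ref{def:SDG:convex} produces $0\in\desir_1'\otimes\desir_2'$, contradicting~\ref{def:SDG:partialloss}.

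The only places where I expect to need to tread carefully are (a) remembering that Lemma~\ref{lemma:extendD} requires $f_i\neq0$, which is why the $f_i=0$ case must be handled separately, and (b) verifying the monotonicity of the construction $(\desir_1,\desir_2)\mapsto\desir_1\otimes\desir_2$ in each argument, so that enlarging $\desir_i$ to $\desir_i'$ really does yield $\desir_1\otimes\desir_2\subseteq\desir_1'\otimes\desir_2'$. Everything else is routine bookkeeping of which gamble belongs to which $\A$-family.
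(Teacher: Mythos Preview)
Your proposal is correct and follows essentially the same route as the paper's proof: both reduce the problem to showing $f_i(X_i)\ind{B_j}(X_j)\in\desir_1\otimes\desir_2\Leftrightarrow f_i\in\desir_i$ for all $B_j\in\B_j\cup\{\states_j\}$, dispatch the trivial direction via the construction, and handle the nontrivial direction by the same enlarge-and-contradict argument using Lemma~\ref{lemma:extendD} and Proposition~\ref{prop:productcoherent:SDG}. The only cosmetic difference is that the paper derives $f_i\neq0$ immediately from coherence of $\desir_1\otimes\desir_2$ before invoking Lemma~\ref{lemma:extendD}, whereas you treat $f_i=0$ as a separate degenerate case---these are logically equivalent.
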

\begin{proof}{\bf of Proposition~\ref{prop:productindependent:SDG}}
For ease of notation, let $\desir\coloneqq\desir_1\otimes\desir_2$. Because of symmetry, it clearly suffices to prove that
\begin{equation*}
(\forall B_2\in\B_2)~~\desir_1=\marg_1(\desir)=\marg_1(\desir\vert B_2),
\vspace{6pt}
\end{equation*}
which, since $\marg_1(\desir)=\marg_1(\desir\vert\states_2)$, is equivalent to proving that, for all $f_1\in\gambleson{1}$ and $B_2\in\B_2\cup\{\states_2\}$,
\begin{equation*}
f_1(X_1)\ind{B_2}(X_2)\in\desir
~\asa~
f_1\in\desir_1.
\vspace{5pt}
\end{equation*}
Since $f_1\in\desir_1$ implies that $f_1(X_1)\ind{B_2}(X_2)\in\A_{2\to1}\subseteq\desir$ for all $B_1\in\B_2\cup\{\states_2\}$, the converse implication holds trivially. So consider any $f_1\in\gambleson{1}$ and $B_2\in\B_2\cup\{\states_2\}$ such that $f_1(X_1)\ind{B_2}(X_2)\in\desir$. Since we know from Proposition~\ref{prop:productcoherent:SDG} that $\desir$ is coherent, this implies that $f_1\neq0$. It remains to prove that $f_1\in\desir_1$.

Assume \emph{ex absurdo} that $f_1\notin\desir_1$. Then since $f_1\neq0$, $\desir_1^{\bullet}\coloneqq\E(\desir_1\cup\{-f_1\})$ is a coherent set of desirable gambles on $\states_1$ because of Lemma~\ref{lemma:extendD}, and therefore, if we let
\vspace{4pt}
\begin{equation}\label{eq:Abullet}
\A_{2\to1}^{\bullet}
\coloneqq
\left\{
f'_1(X_1)\ind{B'_2}(X_2)
\colon
f'_1\in\desir_1^{\bullet}, 
B'_2\in\B_2\cup\{\states_2\}
\right\},
\vspace{4pt}
\end{equation}
it follows from Proposition~\ref{prop:productcoherent:SDG} that $\desir_1^\bullet\otimes\desir_2\coloneqq\E(\A_{1\to2}\cup\A^\bullet_{2\to1})$ is a coherent set of desirable gambles on $\states_1\times\states_2$. Now on the one hand, since $-f_1\in\desir_1^\bullet$, it follows from Equation~\eqref{eq:Abullet} that $-f_1(X_1)\ind{B_2}(X_2)\in\A_{2\to1}^{\bullet}\subseteq\desir_1^\bullet\otimes\desir_2$. On the other hand, since $\desir_1\subseteq\desir_1^{\bullet}$ implies that $\desir_1\otimes\desir_2\subseteq\desir_1^\bullet\otimes\desir_2$, we infer from $f_1(X_1)\ind{B_2}(X_2)\in\desir$ that $f_1(X_1)\ind{B_2}(X_2)\in\desir_1^\bullet\otimes\desir_2$. Since $\desir_1^\bullet\otimes\desir_2$ is coherent, this implies that
\vspace{3pt}
\begin{equation*}
0=f_1(X_1)\ind{B_2}(X_2)-f_1(X_1)\ind{B_2}(X_2)\in
\desir_1^\bullet\otimes\desir_2,
\vspace{3pt}
\end{equation*}
which contradicts~\ref{def:SDG:partialloss}.
\end{proof}
\vspace{-6pt}

\begin{proof}{\bf of Theorem~\ref{theo:natext:SDG}~}
Since we know from Proposition~\ref{prop:productindependent:SDG} that $\desir_1\otimes\desir_2$ is an independent product of $\desir_1$ and $\desir_2$, it suffices to prove that any other such independent product of $\desir_1$ and $\desir_2$ is a superset of $\desir_1\otimes\desir_2$.

So let $\desir$ be any independent product of $\desir_1$ and $\desir_2$. Definition~\ref{def:subsetindependentproduct} then implies that $\desir$ is coherent and that $\A_{1\to2}
\cup
\A_{2\to1}
\subseteq\desir$. Hence, we find that
\begin{equation*}
\desir_1\otimes\desir_2
=
\E
\left(
\A_{1\to2}
\cup
\A_{2\to1}
\right)
\subseteq\E(\desir)=\desir, 
\end{equation*}
where the inclusion follows from Lemma~\ref{lemma:nestedpropsofposandE} and the final equality from Lemma~\ref{lemma:natextDisD}.
\end{proof}
\vspace{-13pt}

\subsubsection{The Conditional Lower Previsions Part}
\vspace{5pt}

\begin{proposition}\label{prop:productcoherent:LP}
$\lp_1\otimes\lp_2$ is a coherent conditional probability on $\C(\states_1\times\states_2)$.
\end{proposition}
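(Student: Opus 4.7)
The proof should be a short composition of previously established results, since the machinery is all in place. My plan is to simply unfold the definition of $\lp_1\otimes\lp_2$ and then invoke coherence results we already have for sets of desirable gambles.

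First, I would recall Equation~\eqref{eq:indnatex:LP}, which defines $(\lp_1\otimes\lp_2)(f\vert B) = \lp_\desir(f\vert B)$ for all $(f,B)\in\C(\states_1\times\states_2)$, where $\desir = \E(\lp_1)\otimes\E(\lp_2)$. So the task reduces to showing that $\lp_\desir$, with this particular $\desir$, is a coherent conditional lower prevision on the full domain $\C(\states_1\times\states_2)$. By Definition~\ref{def:cohlp}, it suffices to exhibit some coherent set of desirable gambles on $\states_1\times\states_2$ whose induced lower prevision via Equation~\eqref{eq:LPfromD} agrees with $\lp_\desir$ on $\C(\states_1\times\states_2)$; the obvious candidate is $\desir$ itself.

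Next, I would verify that $\desir = \E(\lp_1)\otimes\E(\lp_2)$ is indeed a coherent set of desirable gambles on $\states_1\times\states_2$. This requires two ingredients. On the one hand, Proposition~\ref{prop:smallestSDGfromLP} guarantees that $\E(\lp_i)$ is a coherent set of desirable gambles on $\states_i$ for each $i\in\{1,2\}$, since $\lp_i$ is coherent by assumption. On the other hand, Proposition~\ref{prop:productcoherent:SDG}, applied to the two coherent sets $\desir_1 \coloneqq \E(\lp_1)$ and $\desir_2 \coloneqq \E(\lp_2)$, then gives the coherence of $\desir = \E(\lp_1)\otimes\E(\lp_2)$.

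Finally, combining these observations, Definition~\ref{def:cohlp} is satisfied: there exists a coherent set of desirable gambles, namely $\desir$, such that $\lp_1\otimes\lp_2$ coincides with $\lp_\desir$ on its entire domain $\C(\states_1\times\states_2)$ (and indeed the whole map is defined as $\lp_\desir$, so the agreement is trivial). Therefore $\lp_1\otimes\lp_2$ is coherent. I do not anticipate any real obstacle here: the substantial work lies in Proposition~\ref{prop:productcoherent:SDG}, and the present proposition is essentially a bookkeeping corollary that transfers coherence from the desirable-gambles side to the lower-prevision side via Definition~\ref{def:cohlp}.
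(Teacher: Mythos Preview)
Your proposal is correct and follows essentially the same approach as the paper: use Proposition~\ref{prop:smallestSDGfromLP} to get coherence of $\E(\lp_1)$ and $\E(\lp_2)$, apply Proposition~\ref{prop:productcoherent:SDG} to obtain coherence of $\E(\lp_1)\otimes\E(\lp_2)$, and conclude via Definition~\ref{def:cohlp}. The paper's proof is just a more compressed version of exactly this argument.
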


\begin{proof}{\bf of Proposition~\ref{prop:productcoherent:LP}~}
For all $i\in\{1,2\}$, since $\lp_i$ is a coherent conditional lower prevision on $\C_i$, it follows from Proposition~\ref{prop:smallestSDGfromLP} that $\E(\lp_i)$ is a coherent set of desirable gambles on $\states_i$. Therefore, Proposition~\ref{prop:productcoherent:SDG} implies that $\E(\lp_1)\otimes\E(\lp_2)$ is a coherent set of desirable gambles on $\states_1\times\states_2$. The result now follows from Definition~\ref{def:cohlp}.
\end{proof}
\vspace{-16pt}

\begin{proposition}\label{prop:independent:localnatex}
Consider two indexes $i$ and $j$ such that $\{i,j\}=\{1,2\}$. Then for any $f_i\in\gambleson{i}$ and $B_i\in\nonemptypoweron{i}$ and any $B_j\in\B_j$, we have that
\begin{equation}\label{eq:prop:independent:localnatex}
(\lp_1\otimes\lp_2)(f_i\vert B_i\cap B_j)
=
(\lp_1\otimes\lp_2)(f_i\vert B_i)
=
\natexLP_i(f_i\vert B_i).
\end{equation}
\end{proposition}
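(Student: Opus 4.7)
The plan is to reduce everything to the analogous statement about sets of desirable gambles (Proposition~\ref{prop:productindependent:SDG}), which we already have. By symmetry, I may assume $i=1$ and $j=2$. Let $\desir\coloneqq\E(\lp_1)\otimes\E(\lp_2)$, so that by Equation~\eqref{eq:indnatex:LP}, $(\lp_1\otimes\lp_2)(g\vert B)=\lp_{\desir}(g\vert B)$ for every $(g,B)\in\C(\states_1\times\states_2)$. By Proposition~\ref{prop:smallestSDGfromLP}, both $\E(\lp_1)$ and $\E(\lp_2)$ are coherent sets of desirable gambles, so Proposition~\ref{prop:productindependent:SDG} applies: $\desir$ is an independent product of $\E(\lp_1)$ and $\E(\lp_2)$, and in particular
\begin{equation*}
\marg_1(\desir)=\E(\lp_1)=\marg_1(\desir\vert B_2)\text{~~for every $B_2\in\B_2$.}
\end{equation*}

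Next, I would unfold the definitions. For $B_2\in\B_2$, Equation~\eqref{eq:LPfromD} gives
\begin{equation*}
(\lp_1\otimes\lp_2)(f_1\vert B_1\cap B_2)
=\sup\{\mu\in\reals\colon [f_1-\mu]\ind{B_1}(X_1)\ind{B_2}(X_2)\in\desir\}.
\end{equation*}
The defining property of $\marg_1(\desir\vert B_2)$ is that $[f_1-\mu]\ind{B_1}(X_1)\ind{B_2}(X_2)\in\desir$ if and only if $[f_1-\mu]\ind{B_1}\in\marg_1(\desir\vert B_2)$. Using the identity $\marg_1(\desir\vert B_2)=\E(\lp_1)$ established above, this supremum therefore equals $\sup\{\mu\in\reals\colon [f_1-\mu]\ind{B_1}\in\E(\lp_1)\}=\lp_{\E(\lp_1)}(f_1\vert B_1)$, which by Equation~\eqref{eq:naturalextension} is precisely $\natexLP_1(f_1\vert B_1)$. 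An entirely analogous argument, this time using $\marg_1(\desir)=\E(\lp_1)$ and the fact that the gamble $[f_1-\mu]\ind{B_1}(X_1)$ on $\states_1\times\states_2$ belongs to $\desir$ iff $[f_1-\mu]\ind{B_1}\in\marg_1(\desir)$, yields $(\lp_1\otimes\lp_2)(f_1\vert B_1)=\natexLP_1(f_1\vert B_1)$.

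There is essentially no obstacle: the heavy lifting was already done when we proved that $\desir_1\otimes\desir_2$ is an independent product of its marginals. The only thing to be careful about is bookkeeping with the identifications $B_1\leftrightarrow B_1\times\states_2$ and the cylindrical extension $f_1\leftrightarrow f_1(X_1)$, so that ``$[f_1-\mu]\ind{B_1\cap B_2}$'' on the joint space is correctly recognised as $[f_1-\mu]\ind{B_1}(X_1)\ind{B_2}(X_2)$ and thus falls within the scope of the marginalisation operator $\marg_1(\cdot\vert B_2)$. Once this is acknowledged, both equalities in Equation~\eqref{eq:prop:independent:localnatex} follow by a single line of definition-chasing, and this is the proof I would write.
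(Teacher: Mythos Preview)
Your proposal is correct and takes essentially the same approach as the paper: both proofs invoke Proposition~\ref{prop:smallestSDGfromLP} and Proposition~\ref{prop:productindependent:SDG} to obtain $\marg_1(\desir)=\E(\lp_1)=\marg_1(\desir\vert B_2)$ for $\desir=\E(\lp_1)\otimes\E(\lp_2)$, and then unwind Equation~\eqref{eq:LPfromD} to identify the relevant suprema with $\lp_{\E(\lp_1)}(f_1\vert B_1)=\natexLP_1(f_1\vert B_1)$. The only cosmetic difference is that the paper writes out the chain of equivalences for $[f_i-\mu]\ind{B_i}$ explicitly and treats the two equalities in~\eqref{eq:prop:independent:localnatex} separately, whereas you fold the marginal identity directly into the supremum; the content is the same.
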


\begin{proof}{\bf of Proposition~\ref{prop:independent:localnatex}~}
For all $i\in\{1,2\}$, since $\lp_i$ is a coherent conditional lower prevision on $\C_i$, it follows from Proposition~\ref{prop:smallestSDGfromLP} that $\E(\lp_i)$ is a coherent set of desirable gambles on $\states_i$. Therefore, we infer from Proposition~\ref{prop:productindependent:SDG} that $\E(\lp_1)\otimes\E(\lp_2)$ is an independent product of $\E(\lp_1)$ and $\E(\lp_2)$.
For ease of notation, we now let $\lp\coloneqq\lp_1\otimes\lp_2$ and $\desir\coloneqq\E(\lp_1)\otimes\E(\lp_2)$. As we know from Equation~\eqref{eq:indnatex:LP}, $\lp$ is then equal to $\lp_\desir$. Furthermore, since $\desir$ is an independent product of $\E(\lp_1)$ and $\E(\lp_2)$, we know that $\desir$ is epistemically independent and that it has $\E(\lp_1)$ and $\E(\lp_2)$ as its marginals.

We are now ready to prove Equation~\eqref{eq:prop:independent:localnatex}. In order to do that, we fix any two indexes $i$ and $j$ such that $\{i,j\}=\{1,2\}$, any $f_i\in\gambleson{i}$ and $B_i\in\nonemptypoweron{i}$ and any $B_j\in\B_j$. We start by proving the first equality. Since $\desir$ is epistemically independent, we know that
\begin{align*}
[f_i-\mu]\ind{B_i}\in\desir
&\asa
[f_i-\mu]\ind{B_i}\in\marg_i(\desir)\\
&\asa
[f_i-\mu]\ind{B_i}\in\marg_i(\desir\vert B_j)
\asa
[f_i-\mu]\ind{B_i}\ind{B_j}\in\desir
\asa
[f_i-\mu]\ind{B_i\cap B_j}\in\desir
\end{align*}
for all $\mu\in\reals$,
and therefore, we find that
\begin{equation*}
\lp(f_i\vert B_i)
=\sup\big\{\mu\in\reals\colon[f_i-\mu]\ind{B_i}\in\desir\big\}
=\sup\big\{\mu\in\reals\colon[f_i-\mu]\ind{B_i\cap B_j}\in\desir\big\}
=
\lp(f_i\vert B_i\cap B_j).
\end{equation*}
Next, we prove the second equality of Equation~\eqref{eq:prop:independent:localnatex}. Since $\desir$ has $\E(\lp_1)$ and $\E(\lp_2)$ as its marginals, we know that
\vspace{-6pt}
\begin{equation*}
[f_i-\mu]\ind{B_i}\in\desir
\asa
[f_i-\mu]\ind{B_i}\in\marg_i(\desir)
\asa
[f_i-\mu]\ind{B_i}\in\E(\lp_i)
\vspace{3pt}
\end{equation*}
for all $\mu\in\reals$, and therefore, we find that
\begin{equation*}
\lp(f_i\vert B_i)
=
\sup\big\{\mu\in\reals\colon[f_i-\mu]\ind{B_i}\in\desir\big\}
=
\sup\big\{\mu\in\reals\colon[f_i-\mu]\ind{B_i}\in\E(\lp_i)\big\}
=\natexLP_i(f_i\vert B_i),
\end{equation*}
using Equation~\ref{eq:naturalextension} to establish the last equality.
\end{proof}
\vspace{-16pt}

\begin{proposition}\label{prop:productindependent:LP}
The restriction of $\lp_1\otimes\lp_2$ to $\C$ is an independent product of $\lp_1$ and $\lp_2$.
\end{proposition}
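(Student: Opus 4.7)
The plan is to verify the three defining requirements of an independent product of $\lp_1$ and $\lp_2$ in turn, namely coherence, the marginal property, and epistemic independence, by reducing each of them to a result that has already been established.

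First, since Proposition~\ref{prop:productcoherent:LP} tells us that $\lp_1\otimes\lp_2$ is a coherent conditional lower prevision on $\C(\states_1\times\states_2)$, and since it follows directly from Definition~\ref{def:cohlp} that restricting the domain of a coherent conditional lower prevision preserves coherence, the restriction of $\lp_1\otimes\lp_2$ to $\C$ is automatically coherent on $\C$. (Note that $\C$ is by assumption an independent domain that includes $\C_1$ and $\C_2$, so the restriction is well defined and the subsequent verifications make sense.)

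Second, to check the marginal property, I would fix any $i\in\{1,2\}$ and any $(f_i,B_i)\in\C_i$, and then apply Proposition~\ref{prop:independent:localnatex} with the dummy choice $B_j=\states_j$ (which trivially lies in the relevant set of conditioning events implicitly used in that result---this is the only subtle point, and can be sidestepped by observing that the second equality in Equation~\eqref{eq:prop:independent:localnatex} holds regardless, since its proof only uses that $\desir=\E(\lp_1)\otimes\E(\lp_2)$ has $\E(\lp_i)$ as its $i$-th marginal). This gives $(\lp_1\otimes\lp_2)(f_i\vert B_i)=\natexLP_i(f_i\vert B_i)$, which by Proposition~\ref{prop:naturalextension:full} equals $\lp_i(f_i\vert B_i)$. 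Hence the restriction has $\lp_1$ and $\lp_2$ as its marginals.

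Third, to verify epistemic independence in the sense of Definition~\ref{def:epistemicindependence:LP}, I would fix indexes $i,j$ with $\{i,j\}=\{1,2\}$, any $(f_i,B_i)\in\C(\states_i)\cap\C$ and any $B_j\in\B_j$, and then simply invoke the first equality of Equation~\eqref{eq:prop:independent:localnatex} in Proposition~\ref{prop:independent:localnatex} to obtain $(\lp_1\otimes\lp_2)(f_i\vert B_i)=(\lp_1\otimes\lp_2)(f_i\vert B_i\cap B_j)$, which is precisely the required condition. Since all three defining properties of Definition~\ref{def:independentproduct:LP} are now in place, the restriction of $\lp_1\otimes\lp_2$ to $\C$ is an independent product of $\lp_1$ and $\lp_2$.

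The proof is essentially a bookkeeping exercise: the real work has already been done in Propositions~\ref{prop:productcoherent:LP} and~\ref{prop:independent:localnatex}, which in turn rested on Propositions~\ref{prop:productcoherent:SDG} and~\ref{prop:productindependent:SDG} for the set-of-desirable-gambles counterparts. If there is any obstacle at all, it is the minor one of confirming that the marginal equality falls out of Proposition~\ref{prop:independent:localnatex}; but this is unproblematic because that proposition's proof shows $\lp_\desir(f_i\vert B_i)=\natexLP_i(f_i\vert B_i)$ via the marginal property of $\desir=\E(\lp_1)\otimes\E(\lp_2)$ alone, independently of the epistemic-independence step.
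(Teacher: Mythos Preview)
Your proof is correct and follows essentially the same route as the paper: coherence via Proposition~\ref{prop:productcoherent:LP} and restriction, epistemic independence directly from Proposition~\ref{prop:independent:localnatex}, and the marginal property from the second equality in Equation~\eqref{eq:prop:independent:localnatex} combined with the fact that the natural extension agrees with $\lp_i$ on $\C_i$. The only cosmetic difference is that the paper cites Equation~\eqref{eq:naturalextension} and Proposition~\ref{prop:smallestSDGfromLP} for that last step, whereas you invoke Proposition~\ref{prop:naturalextension:full}; your extra care about whether $\states_j\in\B_j$ is a nice touch that the paper glosses over.
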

\begin{proof}{\bf of Proposition~\ref{prop:productindependent:LP}~}
Since we know from Proposition~\ref{prop:productcoherent:LP} that $\lp_1\otimes\lp_2$ is a coherent lower prevision on $\C(\states_1\times\states_2)$, it follows from Definition~\ref{def:cohlp} that its restriction to $\C$ is coherent as well. Due to Definition~\ref{def:independentproduct:LP}, it remains to show that this restriction of $\lp_1\otimes\lp_2$ to $\C$ is epistemically independent and that it coincides with $\lp_1$ and $\lp_2$ on their domain. Epistemic independence follows trivially from Definition~\ref{def:epistemicindependence:LP} and Proposition~\ref{prop:independent:localnatex}. Hence, it remains to prove that the restriction of $\lp_1\otimes\lp_2$ to $\C$ coincides with $\lp_1$ and $\lp_2$ on their domain, or equivalently, that
\begin{equation*}
(\lp_1\otimes\lp_2)(f_i\vert B_i)=\lp_i(f_i\vert B_i)
\text{~~for all $i\in\{1,2\}$ and $(f_i,B_i)\in\C_i$.}
\end{equation*}
So fix any $i\in\{1,2\}$ and $(f_i,B_i)\in\C_i$. We then find that indeed, as desired,
\begin{equation*}
(\lp_1\otimes\lp_2)(f_i\vert B_i)
=\natexLP_i(f_i\vert B_i)
=\lp_i(f_i\vert B_i),
\end{equation*}
where the first equality follows from Proposition~\ref{prop:independent:localnatex} and the second equality follows from Equation~\eqref{eq:naturalextension} and Proposition~\ref{prop:smallestSDGfromLP}.
\end{proof}

\begin{proof}{\bf of Theorem~\ref{theo:natext:LP}~}
Since we know from Proposition~\ref{prop:productindependent:LP} that the restriction of $\lp_1\otimes\lp_2$ to $\C$ is an independent product of $\lp_1$ and $\lp_2$, it suffices to prove that any other such independent product of $\lp_1$ and $\lp_2$ dominates $\lp_1\otimes\lp_2$ on $\C$.

So let $\lp$ be any independent product of $\lp_1$ and $\lp_2$. 
Definition~\ref{def:independentproduct:LP} then implies that $\lp$ is an epistemically independent coherent conditional lower prevision on $\C$ that coincides with $\lp_1$ and $\lp_2$ on their domain. Let $\A_{\lp}$ be the corresponding set of gambles, as defined by Equation~\eqref{eq:AfromLP}, and let \mbox{$\desir\coloneqq\E(\lp)=\E(\A_{\lp})$}. We then know from Proposition~\ref{prop:smallestSDGfromLP} that $\desir$ is a coherent set of desirable gambles on $\states_1\times\states_2$ and that $\lp_\desir$ coincides with $\lp$ on $\C$.
In the remainder of this proof, we will show that $\E(\lp_1)\otimes\E(\lp_2)\subseteq\desir$. Because of Equation~\eqref{eq:indnatex:LP}, this clearly implies that $\lp_\desir(f\vert B)\geq(\lp_1\otimes\lp_2)(f\vert B)$ for all $(f,B)\in\C$. Since $\lp_\desir$ coincides with $\lp$ on $\C$, this implies that $\lp$ dominates $\lp_1\otimes\lp_2$ on $\C$, thereby concluding the proof.

Let $\desir_1\coloneqq\E(\lp_1)$ and let $\A_{2\to1}$ be the corresponding set of gambles on $\states_1\times\states_2$, as defined by Equation~\eqref{eq:A21s}. We will now prove that $\A_{2\to1}\subseteq\desir$. So consider any $f_1\in\desir_1$ and any $B_2\in\B_2\cup\{\states_2\}$. We need to prove that $f_1(X_1)\ind{B_2}(X_2)\in\desir$. Since $f_1\in\desir_1=\E(\lp_1)=\posi(\A_{\lp_1}\cup\mathcal{G}_{>0}(\states_1))$, it follows from Equation~\eqref{eq:posi} that there are $n\in\nats$ and, for all $i\in\{1,\dots,n\}$, $\lambda_i\in\reals_{>0}$ and $g_i\in\A_{\lp_1}\cup\mathcal{G}_{>0}(\states_1)$ such that $f_1=\sum_{i=1}^n\lambda_ig_i$.

For any $i\in\{1,\dots,n\}$, we now let $h_i(X_1,X_2)\coloneqq g_i(X_1)\ind{B_2}(X_2)\in\mathcal{G}(\states_1\times\states_2)$. As we will show, this gamble $h_i$ belongs to $\desir$. We consider two cases: $g_i\in\mathcal{G}_{>0}(\states_1)$ and $g_i\notin\mathcal{G}_{>0}(\states_1)$. If $g_i\in\mathcal{G}_{>0}(\states_1)$, then $h_i\in\mathcal{G}_{>0}(\states_1\times\states_2)$, which, since $\desir$ is a coherent set of desirable gambles on $\states_1\times\states_2$, implies that $h_i\in\desir$. If $g_i\not\in\mathcal{G}_{>0}$, then since $g_i\in\A_{\lp_1}\cup\mathcal{G}_{>0}(\states_1)$, it follows that $g_i\in\A_{\lp_1}$, which implies that there are $(f'_1,B_1)\in\C_1$ and  $\mu<\lp_1(f'_1\vert B_1)$ such that $g_i=[f'_1-\mu]\ind{B_1}$. Furthermore, since $\lp$ coincides with $\lp_1$ on its domain, we also know that $\lp_1(f'_1\vert B_1)=\lp(f'_1\vert B_1)$. If $B_2=\states_2$, Equation~\eqref{eq:AfromLP} therefore implies that $h_i\in\A_{\lp}\subseteq\desir$ because $\ind{B_2}=1$. If $B_2\neq\states_2$, then $B_2\in\B_2$. Since $\lp$ is epistemically independent, this implies that $\lp(f'_1\vert B_1)=\lp(f'_1\vert B_1\cap B_2)$. Hence, here too, Equation~\eqref{eq:AfromLP} implies that $h_i\in\A_{\lp}\subseteq\desir$---because $\ind{B_1\cap B_2}=\ind{B_1}\ind{B_2}$.

In summary then, we have found that $h_i\in\desir$ for all $i\in\{1,\dots,n\}$. Since $f_1=\sum_{i=1}^n\lambda_ig_i$, this implies that
\vspace{-5pt}
\begin{equation*}
f_1(X_1)\ind{B_2}(X_2)
=
\left(\sum_{i=1}^n
\lambda_ig_i(X_1)
\right)
\ind{B_2}(X_2)
=
\sum_{i=1}^n
\lambda_i
g_i(X_1)
\ind{B_2}(X_2)
=
\sum_{i=1}^n
\lambda_i
h_i(X_1,X_2)
\in\desir,
\vspace{4pt}
\end{equation*}
where the inclusion holds because $\desir$ is coherent. Since this is true for every $f_1\in\desir_1$ and every $B_2\in\B_2\cup\{\states_2\}$, it follows that $\A_{2\to1}\subseteq\desir$. 
Using a completely analogous argument, it also follows that $\A_{1\to2}\subseteq\desir$, with $\A_{1\to2}$ defined by Equation~\eqref{eq:A12s} for $\desir_2\coloneqq\E(\lp_2)$.
Hence, we find that $\A_{1\to2}
\cup
\A_{2\to1}
\subseteq\desir$, and therefore,  that 
\begin{equation*}
\E(\lp_1)\otimes\E(\lp_2)
=
\desir_1\otimes\desir_2
=
\E
\left(
\A_{1\to2}
\cup
\A_{2\to1}
\right)
\subseteq\E(\desir)=\desir, 
\end{equation*}
where the second equality follows from Equation~\eqref{eq:indnatext:SDG}, the inclusion follows from Lemma~\ref{lemma:nestedpropsofposandE}, and the last equality follows from Lemma~\ref{lemma:natextDisD}.
\end{proof}
\vspace{-10pt}

\subsection{Proofs and Additional Material for Section~\ref{sec:choiceofevents}}
\vspace{5pt}

\begin{proof}{\bf of Proposition~\ref{prop:addfiniteunionstoB}~}
We only prove the result for $\desir_1\otimes\desir_2$. The result for $\lp_1\otimes\lp_2$ then follows trivially from Equation~\eqref{eq:indnatex:LP}.

Let $\desir_1\otimes\desir_2$ be the independent natural extension that corresponds to $\B_1$ and $\B_2$, as defined by Equations~\eqref{eq:indnatext:SDG}--\eqref{eq:A21s}, and let $\desir_1\otimes'\desir_2$ be the independent natural extension that corresponds to $\B'_1$ and $\B'_2$, defined by
\begin{equation*}
\desir_1\otimes'\desir_2
\coloneqq
\E
\left(
\A'_{1\to2}
\cup
\A'_{2\to1}
\right),
\vspace{-4pt}
\end{equation*}
with
\begin{equation*}
\A'_{1\to2}
\coloneqq
\left\{
f_2(X_2)\ind{B'_1}(X_1)
\colon
f_2\in\desir_2, 
B'_1\in\B'_1\cup\{\states_1\}
\right\}
\vspace{-3pt}
\end{equation*}
and
\begin{equation*}
\A'_{2\to1}
\coloneqq
\left\{
f_1(X_1)\ind{B'_2}(X_2)
\colon
f_1\in\desir_1, 
B'_2\in\B'_2\cup\{\states_2\}
\right\}.
\vspace{8pt}
\end{equation*}
Then as explained in the main text, in the paragraph that precedes Proposition~\ref{prop:addfiniteunionstoB}, we have that $\desir_1\otimes\desir_2\subseteq\desir_1\otimes'\desir_2$. It remains to prove that $\desir_1\otimes'\desir_2\subseteq\desir_1\otimes\desir_2$. 

Fix any $f_2\in\desir_2$ and $B'_1\in\B'_1\cup\{\states_1\}$. We will prove that $f_2(X_2)\ind{B'_1}(X_1)\in\desir_1\otimes\desir_2$. If $B'_1=\states_1$, this follows trivially from Equations~\eqref{eq:indnatext:SDG} and~\eqref{eq:A12s}. Otherwise, it follows from our assumptions that there is some $m\in\nats$ and, for all $k\in\{1,\dots,m\}$, some $B_{1,k}\in\B_1$ such that $B'_1$ is a finite disjoint union of the events $\{B_{1,k}\}_{1\leq k\leq m}$, which implies that $\ind{B'_1}=\sum_{k=1}^m\ind{B_{1,k}}$ and therefore also that $f_2(X_2)\ind{B'_1}(X_1)=\sum_{k=1}^mf_2(X_2)\ind{B_1}(X_1)$. Hence, Equations~\eqref{eq:indnatext:SDG} and~\eqref{eq:A12s} again imply that $f_2(X_2)\ind{B'_1}(X_1)\in\desir_1\otimes\desir_2$. Since this is true for every $f_2\in\desir_2$ and $B'_1\in\B'_1\cup\{\states_1\}$, it follows that $\A'_{1\to2}\subseteq\desir_1\otimes\desir_2$. Using a completely analogous argument, we also infer that $\A'_{2\to1}\subseteq\desir_1\otimes\desir_2$. The result now follows because $\A'_{1\to2}
\cup
\A'_{2\to1}\subseteq\desir_1\otimes\desir_2$ implies that
\begin{equation*}
\desir_1\otimes'\desir_2
=
\E
\left(
\A'_{1\to2}
\cup
\A'_{2\to1}
\right)
\subseteq
\E\left(
\desir_1\otimes\desir_2
\right)
=
\desir_1\otimes\desir_2,
\end{equation*}
using Lemma~\ref{lemma:nestedpropsofposandE} for the inclusion and Lemma~\ref{lemma:natextDisD} and Proposition~\ref{prop:productcoherent:SDG} for the last equality.
\end{proof}
\vspace{-10pt}

\subsection{Proofs and Additional Material for Section~\ref{sec:factadd}}
\vspace{5pt}

\begin{lemma}\label{lemma:fact-add-simple-geq}
For any $f\in\gambleson{1}$ and $h\in\gambleson{2}$ and any simple $\B_1$-measurable $g\in\mathcal{G}_{\geq0}(\states_1)$, we have that
\begin{equation*}
(\lp_1\otimes\lp_2)(f+gh)
\geq\natexLP_1\big(f+g\natexLP_2(h)\big).
\vspace{8pt}
\end{equation*}
\end{lemma}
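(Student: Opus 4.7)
The plan is to fix any $\mu<\natexLP_1(f+g\natexLP_2(h))$ and show that $f+gh-\mu$ lies in $\desir\coloneqq\E(\lp_1)\otimes\E(\lp_2)$. Once this is established, Equation~\eqref{eq:indnatex:LP} and Equation~\eqref{eq:LPfromD} yield $(\lp_1\otimes\lp_2)(f+gh)\geq\mu$, and taking the supremum over all such $\mu$ gives the desired inequality. By Definition~\ref{def:measurable:simple}, I may write $g=c_0+\sum_{i=1}^n c_i\ind{B_i}$ with $c_i\in\reals_{\geq0}$ and $B_i\in\B_1$. Let $\alpha\coloneqq\natexLP_2(h)$.

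The first substantive step is to introduce a small perturbation $\epsilon>0$ that I can control. Since $g$ is bounded and $\mu<\natexLP_1(f+g\alpha)$, I choose $\epsilon>0$ small enough that $\epsilon\sup g<\natexLP_1(f+g\alpha)-\mu$. Because $g\geq0$, the inequality $f+g(\alpha-\epsilon)\geq f+g\alpha-\epsilon\sup g$ holds pointwise, so by~\ref{def:lowerprev:monotonicity} and~\ref{def:lowerprev:constantadditivity} applied to $\natexLP_1$ (coherent by Proposition~\ref{prop:naturalextension:full}),
\begin{equation*}
\natexLP_1(f+g(\alpha-\epsilon))\geq\natexLP_1(f+g\alpha)-\epsilon\sup g>\mu.
\end{equation*}
Combining Equation~\eqref{eq:naturalextension} with the sup characterisation in Equation~\eqref{eq:LPfromD}, this gives $f+g(\alpha-\epsilon)-\mu\in\E(\lp_1)$. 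Analogously, since $\alpha-\epsilon<\alpha=\natexLP_2(h)$, the same sup characterisation yields $h-(\alpha-\epsilon)\in\E(\lp_2)$.

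The second step is the decomposition
\begin{equation*}
f+gh-\mu=\bigl[f+g(\alpha-\epsilon)-\mu\bigr]+g\bigl[h-(\alpha-\epsilon)\bigr],
\end{equation*}
and the aim is to place both pieces in $\desir$. The first bracket is a gamble on $\states_1$ that lies in $\E(\lp_1)$, and its cylindrical extension lies in $\desir$ because $\marg_1(\desir)=\E(\lp_1)$ by Proposition~\ref{prop:productindependent:SDG}. For the second bracket I expand using the simple form of $g$:
\begin{equation*}
g\bigl[h-(\alpha-\epsilon)\bigr]
=c_0\ind{\states_1}(X_1)\bigl[h(X_2)-(\alpha-\epsilon)\bigr]+\sum_{i=1}^n c_i\ind{B_i}(X_1)\bigl[h(X_2)-(\alpha-\epsilon)\bigr].
\end{equation*}
Because $h-(\alpha-\epsilon)\in\E(\lp_2)$ and $B_i\in\B_1\subseteq\B_1\cup\{\states_1\}$ (with $\states_1$ itself allowed for the $c_0$ term), each individual gamble $\ind{B_i}(X_1)[h(X_2)-(\alpha-\epsilon)]$ is a member of $\A_{1\to 2}$ as defined in Equation~\eqref{eq:A12s}. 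Discarding summands with $c_i=0$, the remaining sum lies in $\posi(\A_{1\to 2})\subseteq\E(\A_{1\to 2}\cup\A_{2\to 1})=\desir$; and if every $c_i$ vanishes, the second bracket is just $0$ and the decomposition collapses to the first bracket. In either case the coherence of $\desir$ (Proposition~\ref{prop:productcoherent:SDG}) delivers $f+gh-\mu\in\desir$, completing the argument.

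The only delicate point is the choice of $\epsilon$: I need enough slack to simultaneously push $\alpha-\epsilon$ strictly below $\natexLP_2(h)$ and keep $\mu$ strictly below $\natexLP_1(f+g(\alpha-\epsilon))$. This is handled cleanly by the strict inequality $\mu<\natexLP_1(f+g\natexLP_2(h))$ together with the boundedness of $g$ and the coherence properties~\ref{def:lowerprev:monotonicity}--\ref{def:lowerprev:constantadditivity} of $\natexLP_1$. Degenerate situations (e.g.\ $g\equiv 0$, $h$ constant, or $h-(\alpha-\epsilon)=0$) do not appear in $\A_{1\to 2}$ because $\posi(\cdot)$ requires strictly positive coefficients and $\gamblespos$ excludes $0$, but these cases trivialise the second bracket to $0$ and so do not obstruct the conclusion.
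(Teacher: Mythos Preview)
Your proof is correct, but it follows a genuinely different route from the paper's. The paper stays at the level of lower previsions: it writes $f+gh=f+g\natexLP_2(h)+g[h-\natexLP_2(h)]$, applies superadditivity~\ref{def:lowerprev:superadditive} and non-negative homogeneity~\ref{def:lowerprev:homo} of $\lp_1\otimes\lp_2$ to split off the terms $c_0[h-\natexLP_2(h)]$ and $c_i\ind{B_i}[h-\natexLP_2(h)]$, and then kills each of these via the generalised Bayes rule~\ref{def:lowerprev:GBR} combined with Proposition~\ref{prop:independent:localnatex} (which gives $(\lp_1\otimes\lp_2)(h\vert B_i)=\natexLP_2(h)$). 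No $\epsilon$ is needed because the functional identities hold exactly.

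You instead work one level down, at the set of desirable gambles: for each $\mu$ below the target you exhibit $f+gh-\mu$ explicitly as a sum of an element of $\A_{2\to1}$ (the cylindrical extension of $f+g(\alpha-\epsilon)-\mu\in\E(\lp_1)$) and a $\posi$-combination of elements of $\A_{1\to2}$ (the terms $c_i\ind{B_i}[h-(\alpha-\epsilon)]$ with $h-(\alpha-\epsilon)\in\E(\lp_2)$). The $\epsilon$-slack is the price you pay for needing \emph{strict} membership in the underlying sets rather than limiting values. Your approach is more constructive and bypasses the generalised Bayes rule and Proposition~\ref{prop:independent:localnatex} entirely; the paper's is shorter and more algebraic, leveraging the coherence axioms that have already been packaged into Proposition~\ref{prop:propertiesofLP}. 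Both are clean.
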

\begin{proof}{\bf of Lemma~\ref{lemma:fact-add-simple-geq}~}
Since $g\in\mathcal{G}_{\geq0}(\states_1)$ is a simple $\B$-measurable gamble, we know from Definition~\ref{def:measurable:simple} that there are $c_0\in\reals_{\geq0}$, $n\in\natswith$ and, for all $i\in\{1,\dots,n\}$, $c_i\in\reals_{\geq0}$ and $B_i\in\B_1$, such that $g=c_0+\sum_{i=1}^nc_i\ind{B_i}$. Furthermore, since we know from Proposition~\ref{prop:productcoherent:LP} that $\lp_1\otimes\lp_2$ is coherent, it follows from Proposition~\ref{prop:propertiesofLP} that $\lp_1\otimes\lp_2$ satisfies~\ref{def:lowerprev:homo},~\ref{def:lowerprev:superadditive} and~\ref{def:lowerprev:GBR}. Finally, since $\natexLP_2$ is coherent, we know from Proposition~\ref{prop:propertiesofLP} that it satisfies~\ref{def:lowerprev:constantadditivity}. Therefore, we find that
\begin{align*}
(\lp_1\otimes\lp_2)&(f+gh)
=
(\lp_1\otimes\lp_2)\Big(
f+g\natexLP_2(h)+\big(c_0+\sum_{i=1}^nc_i\ind{B_i}\big)[h-\natexLP_2(h)]
\Big)\\
&\geq
(\lp_1\otimes\lp_2)\big(
f+g\natexLP_2(h)\big)
+
c_0
(\lp_1\otimes\lp_2)\big(h-\natexLP_2(h)
\big)
+
\sum_{i=1}^nc_i
(\lp_1\otimes\lp_2)\big(\ind{B_i}[h-\natexLP_2(h)]
\big)\\
&=
\natexLP_1\big(
f+g\natexLP_2(h)\big)
+
c_0
\natexLP_2\big(h-\natexLP_2(h)
\big)
+
\sum_{i=1}^nc_i
(\lp_1\otimes\lp_2)\big(\ind{B_i}[h-(\lp_1\otimes\lp_2)(h\vert B_i)]
\big)\\
&=
\natexLP_1\big(
f+g\natexLP_2(h)\big)
+
c_0
\big(\natexLP_2(h)-\natexLP_2(h)
\big)
=\natexLP_1\big(
f+g\natexLP_2(h)\big),\\[-8pt]
\end{align*}
where the first equality follows because $g=c_0+\sum_{i=1}^nc_i\ind{B_i}$, where the first inequality follows because $\lp_1\otimes\lp_2$ satisfies~\ref{def:lowerprev:superadditive} and~\ref{def:lowerprev:homo}, where the second equality follows from Proposition~\ref{prop:independent:localnatex}, and where the third equality follows because $\natexLP_2$ satisfies~\ref{def:lowerprev:constantadditivity} and $\lp_1\otimes\lp_2$ satisfies~\ref{def:lowerprev:GBR}.
\end{proof}
\vspace{-16pt}

\begin{lemma}\label{lemma:fact-add-simple-leq}
For any $f\in\gambleson{1}$ and $h\in\gambleson{2}$ and any simple $\B_1$-measurable $g\in\mathcal{G}_{\geq0}(\states_1)$, we have that
\begin{equation*}
(\lp_1\otimes\lp_2)(f+gh)
\leq\natexLP_1\big(f+g\natexLP_2(h)\big).\vspace{8pt}
\end{equation*}
\end{lemma}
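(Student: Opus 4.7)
The plan is to upper-bound $(\lp_1\otimes\lp_2)(f+gh)$ by exhibiting an explicit linear prevision $P^*$ on $\mathcal{G}(\states_1\times\states_2)$ that dominates $\lp_1\otimes\lp_2$ pointwise, and then minimising over a suitable family. By Proposition~\ref{prop:lowerenvelope:onlyif:natex}, $\natexLP_1(f+g\natexLP_2(h))=\min\{P_1(f+g\natexLP_2(h))\colon P_1\in\mathbb{P}_{\lp_1}\}$ and there exists $P_2\in\mathbb{P}_{\lp_2}$ with $P_2(h)=\natexLP_2(h)$. So it suffices, for this fixed $P_2$ and each $P_1\in\mathbb{P}_{\lp_1}$, to construct a dominating linear prevision $P^*$ with $P^*(f+gh)=P_1(f+g\natexLP_2(h))$, and then take the infimum.

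For any such pair $(P_1,P_2)$, working with their unconditional restrictions, I will define the iterated prevision
\begin{equation*}
P^*(\phi)\coloneqq P_1\bigl(x_1\mapsto P_2(\phi(x_1,\cdot))\bigr)
\qquad\text{for all $\phi\in\mathcal{G}(\states_1\times\states_2)$.}
\end{equation*}
Since \ref{def:prev:bounded} confines the inner map to $[\inf\phi,\sup\phi]$, it is itself a bounded gamble on $\states_1$ and $P^*$ is well-defined, and the linearity of $P_1$ and $P_2$ together with Corollary~\ref{corol:fulllinearPiffthreeaxioms} make $P^*$ into a linear prevision on $\mathcal{G}(\states_1\times\states_2)$. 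On the specific gamble $f+gh$, the constant-additivity and homogeneity of $P_2$ give $P_2\bigl((f+gh)(x_1,\cdot)\bigr)=f(x_1)+g(x_1)P_2(h)$, so $P^*(f+gh)=P_1\bigl(f+g\natexLP_2(h)\bigr)$.

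The main obstacle is the pointwise domination $P^*\geq\lp_1\otimes\lp_2$. Writing $\desir=\E(\A_{1\to2}\cup\A_{2\to1})$ as in Equations~\eqref{eq:indnatext:SDG}--\eqref{eq:A21s} and unfolding Equation~\eqref{eq:natextop}, any $\phi-\mu\in\desir$ is a positive linear combination of generators in $\A_{1\to2}\cup\A_{2\to1}\cup\mathcal{G}_{>0}(\states_1\times\states_2)$, and the linearity of $P^*$ reduces everything to checking that $P^*\geq 0$ on each generator. The case $\mathcal{G}_{>0}(\states_1\times\states_2)$ is immediate from~\ref{def:prev:bounded}. For $f_2(X_2)\ind{B_1}(X_1)\in\A_{1\to2}$ with $f_2\in\E(\lp_2)$, a direct computation using the same fact gives $P^*(f_2(X_2)\ind{B_1}(X_1))=P_1(B_1)\,P_2(f_2)$, and I verify $P_2(f_2)\geq 0$ by expanding $f_2=\sum_k\lambda_k g_k$ with $\lambda_k>0$ and $g_k\in\A_{\lp_2}\cup\mathcal{G}_{>0}(\states_2)$ and applying the generalised Bayes rule~\ref{def:prev:GBR} to each generator $g_k=[f'_k-\mu_k]\ind{B_k}$:
\begin{equation*}
P_2(g_k)=[P_2(f'_k\vert B_k)-\mu_k]\,P_2(B_k)\geq[\lp_2(f'_k\vert B_k)-\mu_k]\,P_2(B_k)\geq 0,
\end{equation*}
using that $P_2\in\mathbb{P}_{\lp_2}$ dominates $\lp_2$ on $\C_2$. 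The symmetric argument handles the generators in $\A_{2\to1}$.

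Combining these pieces, $(\lp_1\otimes\lp_2)(f+gh)\leq P^*(f+gh)=P_1\bigl(f+g\natexLP_2(h)\bigr)$ for every $P_1\in\mathbb{P}_{\lp_1}$, and taking the minimum---attained by Proposition~\ref{prop:lowerenvelope:onlyif:natex}---yields the claimed bound $\natexLP_1(f+g\natexLP_2(h))$. I note in passing that the argument uses none of the structural hypotheses on $g$---neither simpleness, nor $\B_1$-measurability, nor non-negativity---so the same inequality in fact holds for every $g\in\mathcal{G}(\states_1)$; the stronger hypotheses in the lemma statement serve only to mirror those of the companion Lemma~\ref{lemma:fact-add-simple-geq}.
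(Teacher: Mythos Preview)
Your proof is correct and takes a genuinely different route from the paper's. The paper proves this upper bound by bootstrapping from the companion lower-bound Lemma~\ref{lemma:fact-add-simple-geq}: it picks $P_1\in\mathbb{P}_{\lp_1}$ and $P_2\in\mathbb{P}_{\lp_2}$ achieving the relevant minima, observes that $\E(\lp_i)\subseteq\E(P_i)$ and hence $\lp_1\otimes\lp_2\leq P_1\otimes P_2$, and then applies~\ref{def:lowerprev:lowerbelowupper} together with Lemma~\ref{lemma:fact-add-simple-geq} (for $-f$, $-h$ and the precise models $P_1,P_2$) to obtain $(P_1\otimes P_2)(f+gh)\leq P_1(f+gP_2(h))$. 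You instead construct the iterated linear prevision $P^*=P_1\circ P_2$ directly and verify $P^*\geq\lp_1\otimes\lp_2$ by checking non-negativity on the generators of $\E(\lp_1)\otimes\E(\lp_2)$---essentially an ad hoc instance of what the paper later formalises as Proposition~\ref{prop:dominatedbynested}. Your approach is more self-contained (it does not rely on Lemma~\ref{lemma:fact-add-simple-geq}) and, as you correctly note, dispenses with all the structural hypotheses on $g$; the paper's route is shorter because it recycles existing machinery, but at the cost of inheriting the simple-$\B_1$-measurability and non-negativity assumptions from Lemma~\ref{lemma:fact-add-simple-geq}.
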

\begin{proof}{\bf of Lemma~\ref{lemma:fact-add-simple-leq}~}
Since $\natexLP_2$ is a coherent conditional lower prevision on $\C(\states_2)$, we know from Proposition~\ref{prop:lowerenvelope:onlyif:full} that there is a conditional linear prevision $P_2$ on $\C(\states_2)$ such that $P_2(h)=\natexLP_2(h)$ and $P_2\geq\natexLP_2$. Similarly, since $\natexLP_1$ is a coherent conditional lower prevision on $\C(\states_1)$, we know from Proposition~\ref{prop:lowerenvelope:onlyif:full} that there is a conditional linear prevision $P_1$ on $\C(\states_1)$ such that $P_1\big(f+g\natexLP_2(h)\big)=\natexLP_1\big(f+g\natexLP_2(h)\big)$ and $P_1\geq\natexLP_1$.

Consider now any $i\in\{1,2\}$. We then know from Proposition~\ref{prop:naturalextension} that $\natexLP_i$ coincides with $\lp_i$ on $\C_i$. Therefore, and because $P_i\geq\natexLP_i$, we also know that $P_i$ dominates $\lp_i$ on $\C_i$. Due to Equation~\eqref{eq:AfromLP}, this implies that $\A_{\lp_i}\subseteq\A_{P_i}$ and therefore, using Lemma~\ref{lemma:nestedpropsofposandE}, also that $\E(\lp_i)\subseteq\E(P_i)$. 
Since this is true for every $i\in\{1,2\}$, it follows from Equation~\eqref{eq:indnatext:SDG} and Lemma~\ref{lemma:nestedpropsofposandE} that $\E(\lp_1)\otimes\E(\lp_2)\subseteq\E(P_1)\otimes\E(P_2)$, and therefore, because of Equation~\eqref{eq:indnatex:LP}, that $\lp_1\otimes\lp_2\leq P_1\otimes P_2$. 

The result can now be proved as follows. First, since $\lp_1\otimes\lp_2\leq P_1\otimes P_2$, we find that 
\begin{equation}\label{eq:lemma:fact-add-simple-leq:1}
(\lp_1\otimes\lp_2)(f+gh)
\leq
(P_1\otimes P_2)(f+gh).
\end{equation}
Secondly, since we know from Proposition~\ref{prop:productcoherent:LP} that $(P_1\otimes P_2)$ is coherent, it follows from Proposition~\ref{prop:propertiesofLP} that $(P_1\otimes P_2)$ satisfies~\ref{def:lowerprev:lowerbelowupper}, which implies that
\begin{equation}\label{eq:lemma:fact-add-simple-leq:2}
(P_1\otimes P_2)(f+gh)
\leq
-(P_1\otimes P_2)(-f-gh)
\leq
-P_1\big(-f+gP_2(-h)\big),
\end{equation}
using Lemma~\ref{lemma:fact-add-simple-geq} for the second inequality.
Finally, we also know that
\begin{equation}\label{eq:lemma:fact-add-simple-leq:3}
-P_1\big(-f+gP_2(-h)\big)
=
P_1\big(f+gP_2(h)\big)
=
\natexLP_1\big(f+g\natexLP_2(h)\big),
\end{equation}
where the first equality follows from Definitions~\ref{def:prev} and~\ref{def:linearprev} because $P_1$ and $P_2$ are conditional linear previsions, and where the second equality follows because $P_2(h)=\natexLP_2(h)$ and $P_1\big(f+g\natexLP_2(h)\big)=\natexLP_1\big(f+g\natexLP_2(h)\big)$.
By combining Equations~\eqref{eq:lemma:fact-add-simple-leq:1}--\eqref{eq:lemma:fact-add-simple-leq:3}, the result is now immediate.
\end{proof}
\vspace{-16pt}

\begin{proposition}\label{prop:fact-add-simple}
For any $f\in\gambleson{1}$ and $h\in\gambleson{2}$ and any simple $\B_1$-measurable $g\in\mathcal{G}_{\geq0}(\states_1)$, we have that
\begin{equation*}
(\lp_1\otimes\lp_2)(f+gh)
=\natexLP_1\big(f+g\natexLP_2(h)\big).
\vspace{5pt}
\end{equation*}
\end{proposition}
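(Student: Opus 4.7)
The proof is immediate from the two lemmas that directly precede the proposition. My plan is to simply observe that Lemma~\ref{lemma:fact-add-simple-geq} provides the inequality $(\lp_1\otimes\lp_2)(f+gh) \geq \natexLP_1\big(f+g\natexLP_2(h)\big)$ under exactly the hypotheses of the proposition---$f\in\gambleson{1}$, $h\in\gambleson{2}$, and $g\in\mathcal{G}_{\geq0}(\states_1)$ simple and $\B_1$-measurable---while Lemma~\ref{lemma:fact-add-simple-leq} provides the reverse inequality $(\lp_1\otimes\lp_2)(f+gh) \leq \natexLP_1\big(f+g\natexLP_2(h)\big)$ under the same hypotheses. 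Chaining these two inequalities yields the desired equality with no further work.

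There is no real obstacle here: all the technical content---the use of superadditivity and non-negative homogeneity of $\lp_1\otimes\lp_2$, the generalised Bayes rule via Proposition~\ref{prop:independent:localnatex}, the choice of dominating linear previsions via Proposition~\ref{prop:lowerenvelope:onlyif:full}, and the self-conjugacy trick that reduces the upper bound to an application of the lower bound---has already been done in the two lemmas. The role of Proposition~\ref{prop:fact-add-simple} is really just to record the equality as a single named result, to be used as the base case for the uniform-limit argument that extends factorisation from simple $\B_1$-measurable $g$ to arbitrary $\B_1$-measurable $g$ in the proof of Theorem~\ref{theo:fact-add-measurable}.

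Accordingly, the entire proof reduces to one line invoking both lemmas, and the hardest part---if one wants to call it that---is only to check that the hypotheses on $f$, $g$, and $h$ in the proposition exactly match those of the two lemmas, which they do verbatim.
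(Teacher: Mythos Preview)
Your proposal is correct and matches the paper's own proof exactly: the paper simply states that the result is an immediate consequence of Lemmas~\ref{lemma:fact-add-simple-geq} and~\ref{lemma:fact-add-simple-leq}.
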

\begin{proof}{\bf of Proposition~\ref{prop:fact-add-simple}~}
Immediate consequence of Lemmas~\ref{lemma:fact-add-simple-geq} and~\ref{lemma:fact-add-simple-leq}.
\end{proof}
\vspace{-7pt}

\begin{proof}{\bf of Theorem~\ref{theo:fact-add-measurable}~}
Since $g\in\mathcal{G}_{\geq0}(\states_1)$ is $\B_1$-measurable, we know from Definition~\ref{def:measurable:uniform} that there is a sequence $\{g_n\}_{n\in\nats}$ of simple $\B_1$-measurable gambles in $\mathcal{G}_{\geq0}(\states_1)$ such that $g_n$ converges uniformly to $g$.
This also implies that $f+g_n\natexLP_2(h)$ converges uniformly to $f+g\natexLP_2(h)$ and, since $h$ is a gamble and therefore by definition bounded, that $f+g_nh$ converges uniformly to $f+gh$. The result now follows from the following series of equalities:
\begin{equation*}
(\lp_1\otimes\lp_2)(f+gh)
=
\lim_{n\to+\infty}
(\lp_1\otimes\lp_2)(f+g_nh)
=
\lim_{n\to+\infty}
\natexLP_1\big(f+g_n\natexLP_2(h)\big)
=
\natexLP_1\big(f+g\natexLP_2(h)\big).
\end{equation*}
The first of these equalities holds because it follows from Propositions~\ref{prop:productcoherent:LP} and~\ref{prop:propertiesofLP} that $\lp_1\otimes\lp_2$ satisfies~\ref{def:lowerprev:uniformcontinuity}. The second equality follows from Proposition~\ref{prop:fact-add-simple}. The third equality holds because the coherence of $\natexLP_1$ allows us to infer from Proposition~\ref{prop:propertiesofLP} that $\natexLP_1$ satisfies~\ref{def:lowerprev:uniformcontinuity}.
\end{proof}

\begin{proof}{\bf of Corollary~\ref{corol:fact-measurable}~}
Let $f\coloneqq0\in\mathcal{G}(\states_i)$. We then know from Theorem~\ref{theo:fact-add-measurable} that 
\begin{equation*}
(\lp_1\otimes\lp_2)(gh)
=(\lp_1\otimes\lp_2)(f+gh)
=\natexLP_i\big(f+g\natexLP_j(h)\big)
=\natexLP_i\big(g\natexLP_j(h)\big).
\end{equation*}
The result can now be inferred from the non-negative homogeneity---\ref{def:lowerprev:homo}---of $\natexLP_i$ that is implied by its coherence. If $\natexLP_j(h)\geq0$, we simply apply the non-negative homogeneity for $\lambda\coloneqq\natexLP_j(h)$. If $\natexLP_j(h)\leq0$, we apply it for $\lambda\coloneqq-\natexLP_j(h)$ and combine this with the fact that $\natexUP_i(g)\coloneqq-\natexLP_i(-g)$.
\end{proof}

\begin{proof}{\bf of Corollary~\ref{corol:add}~}
Let $g\coloneqq1$. Then $g$ belongs to $\mathcal{G}_{\geq0}(\states_1)$ and is $\B_1$-measurable. Therefore, we know from Theorem~\ref{theo:fact-add-measurable} that
\begin{equation*}
(\lp_1\otimes\lp_2)(f+h)
=(\lp_1\otimes\lp_2)(f+gh)
=\natexLP_1\big(f+g\natexLP_2(h)\big)
=\natexLP_1\big(f+\natexLP_2(h)\big).
\end{equation*}
The result now follows from the constant additivity---\ref{def:lowerprev:constantadditivity}---of $\natexLP_1$ that is implied by its coherence.
\end{proof}
\vspace{-10pt}

\subsection{Proofs and Additional Material for Section~\ref{sec:specialcases}}
\vspace{5pt}

\begin{proof}{\bf of Proposition~\ref{prop:dominatedbysingletonzero}~}
Fix any $\epsilon>0$. It then follows from Equations~\eqref{eq:indnatex:LP} and~\eqref{eq:LPfromD} that there is some $\mu\in\reals$ such that $\mu>(\pr_1\underline{\otimes}\pr_2)(f)-\epsilon$ and $f-\mu\in\E(\pr_1)\otimes\E(\pr_2)$. 
Consider any such $\mu$. Since $f-\mu\in\E(\pr_1)\otimes\E(\pr_2)$, it follows from Equations~\eqref{eq:indnatext:SDG},~\eqref{eq:A12s},~\eqref{eq:A21s},~\eqref{eq:natextop} and~\eqref{eq:posi} that
\begin{equation*}
f-\mu=g+\sum_{i=1}^n\lambda_if_{2,i}(X_2)\ind{B_{1,i}}(X_1)+\sum_{j=1}^m\lambda_jf_{1,j}(X_1)\ind{B_{2,j}}(X_2)
\end{equation*}
with $g\in\mathcal{G}_{\geq0}$ and $n,m\in\natswith$ and, for all $i\in\{1,\dots,n\}$, $\lambda_i\in\reals_{>0}$, $f_{2,i}\in\E(\pr_2)$ and $B_{1,i}\in\B_1\cup\{\states_1\}$ and, for all $j\in\{1,\dots,m\}$, $\lambda_j\in\reals_{>0}$, $f_{1,j}\in\E(\pr_1)$ and $B_{2,j}\in\B_2\cup\{\states_2\}$. Since $\pr_{12}$ is a linear prevision---and hence satisfies~\ref{def:prev:bounded}--\ref{def:prev:additive}---this implies that
\begin{align*}
\pr_{12}(f)&=\pr_{12}(\mu)+\pr_{12}(g)+\sum_{i=1}^n\lambda_i\pr_{12}(f_{2,i}(X_2)\ind{B_{1,i}}(X_1))+\sum_{j=1}^m\lambda_j\pr_{12}(f_{1,j}(X_1)\ind{B_{2,j}}(X_2))\\
&\geq\mu+\sum_{i=1}^n\lambda_i\pr_{12}(f_{2,i}(X_2)\ind{B_{1,i}}(X_1))+\sum_{j=1}^m\lambda_j\pr_{12}(f_{1,j}(X_1)\ind{B_{2,j}}(X_2))
\end{align*}
Furthermore, for any $i\in\{1,\dots,n\}$, there are two cases. If $B_{1,i}=\states_1$, then
\begin{equation*}
\pr_{12}(f_{2,i}(X_2)\ind{B_{1,i}}(X_1))
=\pr_{12}(f_{2,i}(X_2))=P_2(f_{2,i})\geq0,
\end{equation*}
where the second equality follows from our assumptions on $P_{12}$ and the third equality follows from Lemma~\ref{lemma:nonnegativeELP}. If $B_{1,i}\in\B_1$, then 
\begin{equation*}
\pr_{12}(f_{2,i}(X_2)\ind{B_{1,i}}(X_1))
\geq\pr_{12}(\inf_{x_2\in\states_2}f_{2,i}\ind{B_{1,i}}(X_1))
=\inf_{x_2\in\states_2}f_{2,i}\pr_{12}(\ind{B_{1,i}}(X_1))
=\inf_{x_2\in\states_2}f_{2,i}\pr_{1}(\ind{B_{1,i}})
=0,
\end{equation*}
where the inequality follows from~\ref{def:lowerprev:monotonicity}, the first equality follows from~\ref{def:prev:homo} and the last two equalities follow from our assumptions on $P_{12}$ and $P_1$ and from the fact that---since we consider atom-independence---$B_{1,i}\in\B_1$ implies that there is some $x_1\in\states_1$ such that $B_{1,i}=\{x_1\}$ and hence also $\pr_{1}(\ind{B_{1,i}})=\pr_1(\ind{x_1})=0$. In both cases, we find that $\pr_{12}(f_{2,i}(X_2)\ind{B_{1,i}}(X_1))\geq0$. Similarly, for any $j\in\{1,\dots,m\}$, we find that $\pr_{12}(f_{1,j}(X_1)\ind{B_{2,j}}(X_2))\geq0$. Hence, it follows that $\pr_{12}(f)\geq\mu$. Since $\mu>(\pr_1\underline{\otimes}\pr_2)(f)-\epsilon$, this implies that $\pr_{12}(f)>(\pr_1\underline{\otimes}\pr_2)(f)-\epsilon$, and, since $\epsilon>0$ is arbitrary, this in turn implies that $\pr_{12}(f)\geq(\pr_1\underline{\otimes}\pr_2)(f)$. This already establishes the first inequality of the statement. For the second equality of the statement, it suffices to apply the first equality to $-f$, yielding $(\pr_1\underline{\otimes}\pr_2)(-f)\leq\pr_{12}(-f)$. Indeed, since $P_{12}$ is linear, this trivially implies that
\begin{equation*}
(P_1\overline{\otimes}P_2)(f)=-(P_1\underline{\otimes}P_2)(-f)\geq -P_{12}(-f)=P_{12}(f),
\end{equation*}
thereby establishing the second inequality of the statement.
\end{proof}

\vspace{-10pt}

\begin{proof}{\bf of Proposition~\ref{prop:dominatedbynested}~}
Fix any $\epsilon>0$. It then follows from Equations~\eqref{eq:indnatex:LP} and~\eqref{eq:LPfromD} that there is some $\mu\in\reals$ such that $\mu>(\pr_1\underline{\otimes}\pr_2)(f)-\epsilon$ and $f-\mu\in\E(\pr_1)\otimes\E(\pr_2)$. 
Consider any such $\mu$. Since $f-\mu\in\E(\pr_1)\otimes\E(\pr_2)$, it follows from Equations~\eqref{eq:indnatext:SDG},~\eqref{eq:A12s},~\eqref{eq:A21s},~\eqref{eq:natextop} and~\eqref{eq:posi} that
\begin{equation*}
f-\mu=g+\sum_{i=1}^n\lambda_if_{2,i}(X_2)\ind{B_{1,i}}(X_1)+\sum_{j=1}^m\lambda_jf_{1,j}(X_1)\ind{B_{2,j}}(X_2)
\end{equation*}
with $g\in\mathcal{G}_{\geq0}$ and $n,m\in\natswith$ and, for all $i\in\{1,\dots,n\}$, $\lambda_i\in\reals_{>0}$, $f_{2,i}\in\E(\pr_2)$ and $B_{1,i}\in\B_1\cup\{\states_1\}$ and, for all $j\in\{1,\dots,m\}$, $\lambda_j\in\reals_{>0}$, $f_{1,j}\in\E(\pr_1)$ and $B_{2,j}\in\B_2\cup\{\states_2\}$. Since $\pr_1$ and $\pr_2$ are linear previsions---and hence satisfy~\ref{def:prev:bounded}--\ref{def:prev:additive}---this implies that
\begin{align*}
\pr_1(\pr_2(f))&=\pr_1(\pr_2(\mu)+\pr_1(\pr_2(g))+\sum_{i=1}^n\lambda_i\pr_1(\pr_2(f_{2,i}(X_2)\ind{B_{1,i}}(X_1)))+\sum_{j=1}^m\lambda_j\pr_1(\pr_2(f_{1,j}(X_1)\ind{B_{2,j}}(X_2)))\\
&\geq\mu+\sum_{i=1}^n\lambda_i\pr_1(\pr_2(f_{2,i}(X_2))\ind{B_{1,i}}(X_1))+\sum_{j=1}^m\lambda_j\pr_1(f_{1,j}(X_1)\pr_2(\ind{B_{2,j}}(X_2)))\\
&=\mu+\sum_{i=1}^n\lambda_i\pr_1(\ind{B_{1,i}}(X_1))\pr_2(f_{2,i}(X_2))+\sum_{j=1}^m\lambda_j\pr_1(f_{1,j}(X_1))\pr_2(\ind{B_{2,j}}(X_2)).
\end{align*}
Furthermore, for any $i\in\{1,\dots,n\}$, it follows from Lemma~\ref{lemma:nonnegativeELP} that $\pr_2(f_{2,i}(X_2))\geq0$ and from~\ref{def:prev:bounded} that $\pr_1(\ind{B_{1,i}}(X_1))\geq0$, which implies that $\lambda_i\pr_1(\ind{B_{1,i}}(X_1))\pr_2(f_{2,i}(X_2))\geq0$. Similarly, for any $j\in\{1,\dots,m\}$, we find that $\lambda_j\pr_1(f_{1,j}(X_1))\pr_2(\ind{B_{2,j}}(X_2))\geq0$. Hence, it follows that $\pr_1(\pr_2(f))\geq\mu$. Since $\mu>(\pr_1\underline{\otimes}\pr_2)(f)-\epsilon$, this implies that $\pr_1(\pr_2(f))>(\pr_1\underline{\otimes}\pr_2)(f)-\epsilon$, and, since $\epsilon>0$ is arbitrary, this in turn implies that $\pr_1(\pr_2(f))\geq(\pr_1\underline{\otimes}\pr_2)(f)$. This already establishes the first inequality of the statement. For the second equality of the statement, it suffices to apply the first equality to $-f$, yielding $(\pr_1\underline{\otimes}\pr_2)(-f)\leq\pr_1(\pr_2(-f))$. Indeed, since $P_1$ and $P_2$ are linear, this trivially implies that
\begin{equation*}
(P_1\overline{\otimes}P_2)(f)=-(P_1\underline{\otimes}P_2)(-f)\geq -P_1(P_2(-f))=P_1(P_2(f)),
\end{equation*}
thereby establishing the second inequality of the statement.
\end{proof}

\end{document}